\documentclass[11pt]{amsart}
\usepackage{amsmath,amssymb}
\usepackage{amsfonts}
\usepackage{amsbsy}
\usepackage{fullpage}
\usepackage{stmaryrd}
\usepackage{latexsym}
\usepackage{quiver}
\usepackage{mathrsfs}
\usepackage{accents}
\usepackage[all]{xy}
\usepackage{tikz}
\usepackage{tikz-cd}
\usepackage{longtable}
\usepackage{cancel}
\usepackage{cite}
\usepackage{url}
\usepackage{hyperref}
\hypersetup{colorlinks}
\usepackage{xcolor}
\usepackage{boxedminipage}
\usepackage{framed}

\usepackage{bm}
\usepackage{enumitem}
\usepackage{datetime2}
\usepackage{comment}

\newcommand{\fm}{\mathfrak{m}}

\newcommand{\cF}{\mathcal{F}}

\newcommand{\R}{\mathbb{R}}

\newcommand{\scA}{\mathscr{A}}
\newcommand{\scB}{\mathscr{B}}
\newcommand{\scC}{\mathscr{C}}

\newcommand{\scG}{\mathscr{G}}

\newcommand{\scM}{\mathscr{M}}

\newcommand{\scO}{\mathscr{O}}

\DeclareMathAlphabet{\mathpzc}{OT1}{pzc}{m}{it}
\newcommand{\svee}{{\scriptscriptstyle{\vee}}}

\newcommand{\cin}{C^\infty}

\newcommand{\inv}{^{-1}}

\newcommand{\op}[1]{{#1}^{\mbox{\sf{\tiny{op}}}}}

\newcommand{\uu}[1]{\underline{#1}}

\newcommand{\lb}{\left[ \cdot\,,\cdot\right]}
\newcommand{\Pb}{\left\{ \cdot\,,\cdot\right\}}

\DeclareMathOperator{\Der}{Der}
\DeclareMathOperator{\cDer}{C^\infty Der}

\DeclareMathOperator{\CDer}{\cin Der}

\DeclareMathOperator{\im}{im}

\DeclareMathOperator{\Spec}{Spec}
\DeclareMathOperator{\ev}{\mathsf{ev}}
\DeclareMathOperator{\pr}{\mathsf{pr}}

\newcommand{\Euc}{\mathsf{Euc}}

\newcommand{\cring}{\cin\mathsf{Ring}}

\newcommand{\Mod}{\mathsf{Mod}}
\newcommand{\Man}{\mathsf{Man}}

\newcommand{\LCRS}{\mathsf{L\cin RS}}

\newcommand{\Set}{\mathsf{Set}}

\newcommand{\Aff}{\mathsf{\cin Aff}}

\theoremstyle{definition}
\newtheorem{thm}{Theorem}[section]
\newtheorem{lemma}[thm]{Lemma}
\newtheorem{theorem}[thm]{Theorem}
\newtheorem{proposition}[thm]{Proposition}
\newtheorem{corollary}[thm]{Corollary}
\newtheorem*{corollary*}{Corollary}

\newtheorem*{claim*}{Claim}
\newtheorem{definition}[thm]{Definition}
\newtheorem{remark}[thm]{Remark}
\newtheorem*{remark*}{Remark}
\newtheorem{example}[thm]{Example}
\newtheorem{notation}[thm]{Notation}

\numberwithin{equation}{thm}

\makeatletter
\let\@wraptoccontribs\wraptoccontribs
\makeatother

\begin{document}

\title{The symplectic leaves of the affine Poisson $C^\infty$-scheme associated with a singular subalgebroid} 
\author{Ruben Louis}
\contrib[ \lowercase{with appendix} A \lowercase{by}]{Eugene Lerman}
\address{Department of Mathematics, University of Illinois at Urbana-Champaign, 214 Harker Hall,
1305 W. Green Street, Urbana, IL 61801 USA}
\email{rlouis@illinois.edu}
\thanks{typeset \DTMnow}
\maketitle

\begin{abstract}
  Every singular subalgebroid, and in particular every singular foliation $\mathscr{G}$, determines a universal Poisson $C^\infty$-scheme $\Spec (\mathcal{F}(\mathscr{G}))$, which is the analogue of the linear Poisson structure on the dual of a Lie algebroid. We show that the underlying space $X_{\mathcal{F}(\mathscr{G})}$ of $\Spec (\mathcal{F}(\mathscr{G}))$
decomposes as a disjoint union of symplectic manifolds. Furthermore, we extend the Helffer-Nourrigat cone to a singular subalgebroid and show that it is an embedded Poisson $C^\infty$-subscheme of $\Spec (\mathcal{F}(\mathscr{G}))$.
\end{abstract}
\tableofcontents
\section{Introduction}
Lie algebroids generalize tangent bundles of manifolds and arise naturally in various areas of differential and algebraic geometry. In differential geometry, they serve as the infinitesimal counterparts of Lie groupoids, while in Poisson geometry, the cotangent bundle of a Poisson manifold carries a canonical Lie algebroid structure. Lie algebroids encompass an important subclass of examples provided by regular foliations, that is, involutive subbundles of the tangent bundle of a manifold. Specifically, foliations correspond to Lie algebroids whose anchor maps are injective. 

Lie algebroids also admit singular generalizations. One important class consists of singular subalgebroids (Definition~\ref{def:Sing-subal}), introduced by Zambon~\cite{Zambon}, which encompass singular foliations as a subclass~\cite{AS, LLL1}. More generally, the algebraic analogues of singular Lie algebroids are given by Lie-Rinehart algebras. 

In this paper, we regard $C^\infty(M)$, the algebra of smooth functions on a 
smooth manifold $M$, not merely as an $\mathbb{R}$-algebra, but as a 
$C^\infty$-ring (\S \ref{sec:Cinfty-rings}). This point of view provides a natural algebraic framework for 
understanding singular Lie algebroids while, at the same time, recovering the 
classical regular theory as a particular case.

In \cite{Courant}, T.~Courant established a one-to-one correspondence between Lie algebroid structures on a vector bundle~$A$ and linear Poisson structures on its dual bundle~$A^*$. Recently, in \cite{LL}, we extended Courant's correspondence to arbitrary Lie-Rinehart algebras over $C^\infty$-rings (Theorem \ref{thm:1-1}). This extension allows us to associate a universal
Poisson $C^\infty$-ring, denoted by $\mathcal{F}(\mathscr{G})$, with any
singular subalgebroid~$\mathscr{G}$. More precisely, let
\[
C^\infty(M)/C^\infty\text{-}\mathsf{Ring}
\]
denote the undercategory of $C^\infty$-rings associated with $C^\infty(M)$. The
forgetful functor
\[
C^\infty(M)/C^\infty\text{-}\mathsf{Ring}
\longrightarrow
C^\infty(M)\text{-}\mathsf{Mod}
\]
admits a left adjoint
\[
\mathcal{F}\colon
C^\infty(M)\text{-}\mathsf{Mod}
\longrightarrow
C^\infty(M)/C^\infty\text{-}\mathsf{Ring}.
\]
Moreover, for any vector bundle $A\to M$, applying $\mathcal{F}$ to the
$C^\infty(M)$-module of smooth sections $\Gamma(A)$ yields
\[
\mathcal{F}(\Gamma(A))= C^\infty(A^*),
\]
where $C^\infty(A^*)$ is the algebra of smooth functions on the dual
bundle~$A^*$. If $\mathscr{G}$ is a singular subalgebroid, its Lie-Rinehart algebra structure
induces a Poisson $C^\infty$-ring structure on $\mathcal{F}(\mathscr{G})$.
Consequently, its $C^\infty$-spectrum
\(
\operatorname{Spec}\bigl(\mathcal{F}(\mathscr{G})\bigr)
\)
is naturally an affine Poisson $C^\infty$-scheme. In this way, the classical
dual Poisson manifold $A^*$ associated with a Lie algebroid~$A$ admits a
natural extension to the setting of singular subalgebroids.

A natural question from the perspective of Poisson geometry is the following:
\emph{What becomes of the symplectic leaves in the singular setting? Do they
continue to exist?} The purpose of this paper is to address this question. We
show that the answer is affirmative when the singular subalgebroid
$\mathscr{G}$ is a coherent sheaf, so that $\mathscr{G}$ can be realized as
the image of a vector bundle morphism~$\rho$ with $\ker\rho$ being a locally finitely generated $C^\infty(M)$-module. In this case, the associated
Poisson $C^\infty$-ring takes the form
\[
\mathcal{F}(\mathscr{G})
=
C^\infty(A^*)/
\bigl\langle \operatorname{ev}_{\ker\rho}\bigr\rangle.
\]
where
\(
\operatorname{ev}\colon \Gamma(A)\hookrightarrow C^\infty(A^*)
\)
denotes the evaluation map. Consequently, there is a closed embedding of affine $C^\infty$-schemes
\[
\operatorname{Spec}\bigl(\mathcal{F}(\mathscr{G})\bigr)
\hookrightarrow
\operatorname{Spec}\bigl(C^\infty(A^*)\bigr)
=
(A^*,C^\infty_{A^*}).
\]
Within this realization, we show that an appropriate analogue of the
symplectic foliation persists in the singular setting. More precisely, we show the following theorem holds.

\begin{theorem}
Let $(E\to M, \lb_E, r)$ be a Lie algebroid. 
Let $\scG\subset \Gamma(E)$ be a singular subalgebroid (Definition \ref{def:Sing-subal}) so that there is an $E$-valued almost Lie algebroid $\rho\colon A\to E$ with $\rho(\Gamma(A))=\scG$. If $\mathscr{G}$ is a coherent sheaf, then 
\begin{enumerate} 
    \item \label{(4.12.i)} the underlying space $X_{\mathcal{F}(\mathscr{G})}$ of the corresponding Poisson $C^\infty$-scheme  \[\Spec( \cF(\scG)) \hookrightarrow \Spec(C^\infty(A^*) )=(A^*, C^\infty_{A^*})\]
\begin{enumerate}
    \item is a disjoint union of symplectic smooth manifolds (referred to as \emph{symplectic leaves of\; $X_{\cF(\mathcal G)}$}). 
    
    \item The symplectic leaves of $X_{\mathcal{F}(\mathcal G)}$ are independent of the choice of the almost Lie algebroid $A$. More precisely, for any other almost Lie algebroid $A'$, there is a canonical leaf-preserving Poisson $C^\infty$-scheme isomorphism
\[
X^{A}_{\mathcal{F}(\mathscr{G})}
\stackrel{\sim}{\longrightarrow}
X^{A'}_{\mathcal{F}(\mathscr{G})}.
\]
Here, $X^{A}_{\mathcal{F}(\mathscr{G})}$ and
$X^{A'}_{\mathcal{F}(\mathscr{G})}$ denote the corresponding realizations of
$X_{\mathcal{F}(\mathscr{G})}$ in $A^*$ and $A'^*$, respectively.
\end{enumerate}
\item \label{(4.12.ii)} for each leaf $L\subset M$ of the singular foliation $r(\mathscr{G})\subset \mathfrak X(M)$, the cotangent bundle $T^*L\hookrightarrow A^*|_L$ is a symplectic leaf of $X_{\cF(\scG)}$. 
\item for $M_{\mathrm{reg}}^\mathscr{G}\subset M$ the set of regular points of $\mathscr{G}$, \[X_{\mathcal{F}(\mathscr{G})}\cap A^*|_{{M}_{\mathrm{reg}}^{\mathscr{G}}}=\bigcup_{m\in M_{\mathrm{reg}}^{\mathscr{G}}}\mathrm{Im}(\rho^*_m)\supseteq \bigcup_{\substack{ m\in M_{\mathrm{reg}}^{\mathscr{G}}\cap L\\
L\in M/\mathscr{G}}}T^*_mL\] with equality when $E=TM$ and $r=\mathrm{id}$.
\end{enumerate}
\end{theorem}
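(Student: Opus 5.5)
The plan is to identify $X_{\cF(\scG)}$ concretely inside $A^*$ and then exhibit it as a union of symplectic leaves of an ambient almost-Poisson structure. Since $\cF(\scG)=C^\infty(A^*)/\langle \ev_{\ker\rho}\rangle$ and $\ker\rho$ is locally finitely generated (coherence), the zero set of the ideal is
\[
X_{\cF(\scG)}=\{\xi\in A^*_m : \langle \xi, a(m)\rangle=0 \text{ for all local sections } a\in\ker\rho\}.
\]
Here I would use that, for $C^\infty$-rings of the form $C^\infty(\R^n)/I$ with $I$ finitely generated, the points of the spectrum are exactly the common zeros. The first step is to show that $\langle \ev_{\ker\rho}\rangle$ is a Poisson ideal for the linear almost-Poisson bracket on $A^*$ induced by the almost Lie algebroid $A$. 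This follows from the fact that $\ker\rho$ is an ideal for $[\cdot,\cdot]_A$: since $\rho$ is bracket-preserving into $E$, we get $\rho[a,b]_A=[\rho a,\rho b]_E=0$ when $\rho b=0$. It then follows that, modulo this ideal, the bracket satisfies Jacobi, because the Jacobiator of $A$ takes values in $\ker\rho$. As a consequence, the Hamiltonian vector fields $X_{\ev_a}$ and $X_{\pi^*f}$ are tangent to $X_{\cF(\scG)}$ and preserve the vanishing ideal.

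For (1a), I would define the leaves as the orbits in $X_{\cF(\scG)}$ of the family of Hamiltonian vector fields $\{X_{\ev_a},X_{\pi^*f}\}$. These vector fields generate a locally finitely generated involutive module restricted to $X_{\cF(\scG)}$, namely the image of $T^*A^*\to TA^*$ under the bivector. By a Stefan--Sussmann type argument, each orbit is an immersed manifold. On each orbit the bivector restricts to a nondegenerate bivector, and the Jacobi identity holds on it since it holds modulo the ideal. Hence each orbit is symplectic, and the orbits partition $X_{\cF(\scG)}$.

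For (1b), given two almost Lie algebroids $A,A'$ over $\scG$, I would use the standard fact that there are morphisms $\phi\colon A\to A'$ and $\psi\colon A'\to A$ over $E$ (obtained by lifting generators, via projectivity of $\Gamma(A)$). Dually, $\phi^*$ and $\psi^*$ map $X^{A'}$ to $X^A$ and back. The key computation is that these maps are Poisson modulo the ideals and are mutually inverse on $X_{\cF(\scG)}$. The latter holds because $\psi\circ\phi-\mathrm{id}$ takes values in $\ker\rho$, which pairs to zero with points of $X$. Intrinsically, both realizations are $\Spec\cF(\scG)$, so the isomorphism is canonical, and Poisson isomorphisms carry Hamiltonian orbits to Hamiltonian orbits.

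For (2), I would note that $\xi=\rho^*r^*\eta$ with $\eta\in T^*_mM$ annihilates $\ker\rho$. Restricting to the leaf $L$, the image of $T^*L$ embeds into $A^*|_L$ via $(r\circ\rho)^*$, which is injective on $T^*L$ because $r\rho(A_m)=T_mL$. I would then check that the Hamiltonian flows of $\ev_a$ and $\pi^*f$ move along this image: on such covectors, $\ev_a$ restricts to the function $\eta\mapsto\langle\eta,r\rho a\rangle$, whose flow is the cotangent lift. Thus the canonical symplectic form of $T^*L$ matches, and the image is open-closed in a leaf, hence a whole leaf.

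For (3), I would work at regular points, where $\rho(A)$ has locally constant rank, so $\ker\rho$ is a subbundle whose sections generate. Then the annihilator of $\ker\rho_m$ equals $\mathrm{Im}(\rho_m^*)$, which gives the equality. Moreover, $T^*_mL=\mathrm{Im}((r\rho)_m^*)\subseteq\mathrm{Im}(\rho_m^*)$, with equality when $r=\mathrm{id}$.

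I expect the main obstacle to be (1a), namely proving that the orbits are smooth manifolds carrying a genuine nondegenerate closed form even though $X_{\cF(\scG)}$ is singular. The difficulty is that the Jacobi identity holds only modulo the ideal. I would first try to invoke the splitting theorem for almost-Poisson structures whose Jacobiator vanishes on an invariant subset.
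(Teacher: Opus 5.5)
Your treatment of (1b), (2) and (3) follows the paper's route. That route uses the comparison maps $\Phi^\svee,\Psi^\svee$ dual to lifts through $\scG$. It uses the immersion $T^*L\hookrightarrow A^*$ via $(r\circ\rho)^*$, with the flows of $\mathcal{H}^a$ covering cotangent lifts. It uses the annihilator of $\ker\rho$, which is locally a subbundle at regular points.

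The genuine gap is in (1a), exactly at the step you flagged as the main obstacle. Sussmann's orbit theorem gives immersed orbits $\mathcal{O}$ without any involutivity. It also gives that $T_\xi\mathcal{O}$ is spanned by the vectors $\phi_*\mathcal{H}^\nu(\xi)$, with $\phi$ a composition of Hamiltonian flows. The inclusion $\operatorname{im}\pi^\#_\xi\subseteq T_\xi\mathcal{O}$ is clear. But nondegeneracy of $\pi|_{\mathcal{O}}$ needs the reverse inclusion. For an almost-Poisson bivector, Hamiltonian flows are not Poisson maps, so nothing in your plan forces $\phi_*\mathcal{H}^\nu(\xi)\in\operatorname{im}\pi^\#_\xi$.

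Calling the module ``involutive restricted to $X$'' does not supply this. On $A^*$ one has $[\mathcal{H}^F,\mathcal{H}^G]=\mathcal{H}^{\{F,G\}}-J_\pi(F,G,\cdot)$. The Jacobiator vector field vanishes on $X$ but need not belong to the module. There is also no splitting theorem available for an almost-Poisson structure along a singular invariant subset.

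The missing ingredient is Lemma~\ref{lem:Pushforward-of-hamiltonian}. For $x_0\in X$, the $t$-derivative of $(\Phi^{\mathcal{H}^F}_{-t})^*\{(\Phi^{\mathcal{H}^F}_t)^*G,(\Phi^{\mathcal{H}^F}_t)^*H\}$ at $x_0$ is a Jacobiator evaluated at $\Phi^{\mathcal{H}^F}_{-t}(x_0)\in X$, hence zero. So Hamiltonian flows are Poisson along $X$. Consequently, the pushforward of a Hamiltonian vector field under a Hamiltonian flow coincides, at points of $X$, with the Hamiltonian vector field of the transported function. This gives $T_\xi\mathcal{O}=\operatorname{im}\pi^\#_\xi$. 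Your involutivity intuition can also be made rigorous: run Hermann's linear-ODE argument for the pushed-forward local generators along an integral curve lying in $X$, where the Jacobiator term drops out.

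A second, related omission is the flow-invariance of $X$ itself. You deduce that the Hamiltonian vector fields are ``tangent to $X$'' from the fact that they preserve $\mathcal{I}=\langle\ev_{\ker\rho}\rangle$. But $v[\mathcal{I}]\subseteq\mathcal{I}$ does not imply that integral curves starting in $Z(\mathcal{I})$ stay there: $\partial_x$ preserves the ideal of functions vanishing on $(-\infty,0]$.

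This is where coherence is really used. Locally $\mathcal{I}=(F_1,\dots,F_r)$ with $\mathcal{H}^\nu F_i=\sum_j c_{ij}F_j$. So along an integral curve $\gamma$, the vector $(F_i\circ\gamma)_i$ solves a linear ODE with zero initial value and vanishes identically (Lemma~\ref{lem:flow-zero-locus}). You spent the coherence hypothesis instead on identifying the $\R$-points of $\cF(\scG)$ with the zero set, which holds for every ideal of $C^\infty(A^*)$ (Lemma~\ref{lem:A2}). Flow-invariance of $X$ is needed both to keep the orbits inside $X$ and in the argument of the previous paragraph.

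Finally, the last step of (2) should run the other way round. Each orbit through a point of $T^*L$ lies in $T^*L$ and has dimension $2\dim L$, hence is open in $T^*L$. Connectedness of $T^*L$ then forces a single orbit.
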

Besides the Poisson $C^\infty$-ring $\mathcal{F}(\mathscr{G})$ associated 
with a singular foliation $\mathscr{G}\subset \mathfrak{X}(M)$, there is 
another canonical Poisson $C^\infty$-scheme naturally associated with 
$\mathscr{G}$, namely the Helffer-Nourrigat cone 
\cite{HN,AMY,Lo3}. The latter arises in the study of longitudinal differential operators on singular foliations. We compare these two Poisson $C^\infty$-schemes. 
We show in Theorem~\ref{thm:HNcone} that the Helffer-Nourrigat cone 
embeds into $\Spec(\mathcal{F}(\mathscr{G}))$ as a Poisson 
$C^\infty$-subscheme. In particular, its underlying space
\[
    \mathrm{HN}(\mathscr{G}) \subset X_{\mathcal{F}(\mathscr{G})}
\]
is a disjoint union of symplectic leaves of 
$X_{\mathcal{F}(\mathscr{G})}$. This provides a natural interpretation of 
the Helffer-Nourrigat cone within the framework of Poisson 
$C^\infty$-rings and $C^\infty$-schemes.

The spaces $\mathrm{HN}(\mathscr{G})$ and 
$X_{\mathcal{F}(\mathscr{G})}$ are, however, rather different in 
nature. The former is a cone, as its name suggests: its fibers are 
invariant under scalar multiplication. The latter is instead naturally 
viewed as a singular vector bundle, in the sense that its fibers are 
vector spaces whose dimensions may vary from point to point (Remark \ref{Rmk:Tepui}). 
Consequently, the inclusion
\[
    \mathrm{HN}(\mathscr{G}) \subset X_{\mathcal{F}(\mathscr{G})}
\]
is, in general, strict, as illustrated in 
Example~\ref{ex:comparison}.

The paper is organized as follows. In \S~\ref{sec:2}, we review the 
definitions of singular subalgebroids, singular foliations, and almost 
Lie algebroid structures. In \S~\ref{sec:3} and \S~\ref{sec:4}, we 
review the necessary background on Poisson $C^\infty$-rings and 
$C^\infty$-schemes, and recall the one-to-one correspondence between 
Lie-Rinehart algebras over $C^\infty$-rings and linear Poisson 
$C^\infty$-rings. Finally, in \S~\ref{sec:5} and \S~\ref{sec:6}, 
we state and prove the main results of the paper.

\vspace{0.2cm}

{\bf Acknowledgments.} I would like to thank Stefano Ronchi, Chenchang Zhu, and Camilo Angulo for their stimulating questions and fruitful discussions in the early stages of this project, particularly during my visits to Göttingen in the summer of 2026. I am also grateful to Marco Zambon and Camille Laurent-Gengoux for their valuable feedback on the results, and to Rui Fernandes for helpful discussions. Finally, I would like to thank Eugene Lerman for writing Appendix \ref{appen:technical-lem}.

\section{Singular subalgebroids and singular foliations}\label{sec:2} Throughout this paper, we work in the smooth setting and identify a sheaf with its global sections.

The following concept was first introduced in \cite{Zambon}. Let $(E, \left[\cdot\,,\cdot\right]_E, r)$ be a Lie algebroid on a manifold $M$.

\begin{definition}\label{def:Sing-subal}
    A \emph{singular subalgebroid} of $E$  is a $C^\infty(M)$-submodule $\scG\subset\Gamma(E)$ of sections of $E$ that fulfills the following conditions:
\begin{enumerate}
     \item \textbf{Stability under Lie bracket}: $[\scG,\scG]_E\subset \scG$.
     \item \textbf{Local finite generateness}: every $m\in M$ admits an
open neighborhood $\mathcal{U}\subseteq M$ together with a finite number of sections $X_1, \ldots, X_r \in \scG$ such that for
every open subset $\mathcal V \subseteq \mathcal U$ the vector fields ${X_1}_{|_\mathcal{V}} ,\ldots, {X_r}_{|_\mathcal{V}}$ generate $\scG$ on $\mathcal{V}$ as a $C^\infty(\mathcal V)$-module.
 \end{enumerate}
 \end{definition} 
\begin{example}[Singular foliations] An important class of singular subalgebroids that we consider in this section is that of singular foliations in the sense of \cite{AS, LLL1}. A \emph{singular foliation} $\scG$ on $M$ is a singular subalgebroid of $E=TM$.
\end{example}


\noindent
\textbf{Three notions of regular points.}
Given a singular subalgebroid $\mathscr{G}\subseteq \Gamma(E)$, we consider the following two singular foliations:
\begin{enumerate}
    \item The singular foliation associated with $E$, namely,
    $r(\Gamma(E))\subseteq \mathfrak{X}(M)$.
\item The singular foliation associated with $\mathscr{G}$, namely,
    $r(\mathscr{G})\subseteq r(\Gamma(E)) \subseteq \mathfrak{X}(M)$. We call $r(\mathscr{G})$ the \emph{basic} singular foliation induced by $\mathscr{G}$.
\end{enumerate}
These give rise to three notions of regular points:
\begin{enumerate}
    \item The regular points of the Lie algebroid $E$, i.e., those of the singular foliation $r(\Gamma(E))\subset \mathfrak X(M)$.

    \item The regular points of the singular foliation $r(\mathscr{G})$.

    \item The open dense subset of $M$ consisting of points that admit an open neighborhood $\mathcal{U}\subseteq M$ such that the evaluation of $\mathscr{G}$ over $\mathcal{U}$ is a vector subbundle of $E|_{\mathcal{U}}$. We denote this subset by $M_{\mathrm{reg}}^{\mathscr{G}}$ and refer to its points as the \emph{regular points of $\mathscr{G}$}.
\end{enumerate}
When $E=TM$ and $r=\mathrm{id}$, the singular subalgebroid $\mathscr{G}$ is simply a singular foliation, and $M_{\mathrm{reg}}^{\mathscr{G}}$ coincides with the usual regular part of a singular foliation, in the sense of, e.g.,  \cite{AS,LLL1}. In this case, we simply write $M_{\mathrm{reg}}$ if there no confusion.

\begin{definition}Let $\scG\subseteq \Gamma(E)$ be a singular subalgebroid. An \emph{$E$-valued almost Lie algebroid  $(A, \left[\cdot\,,\cdot\right]_A,\rho)$ over $\scG$} is a vector bundle morphism  $A\stackrel{\rho}{\longrightarrow} E$ (referred to as the \emph{anchor map}) such that $\rho(\Gamma(A))=\scG$; together with a skew-symmetric bilinear map  $\left[\cdot\,,\cdot\right]_A\colon \Gamma(A)\wedge\Gamma(A)\rightarrow \Gamma(A)$ (referred to as \emph{the bracket}) satisfying for all $a,b\in \Gamma(A)$ and $f\in C^\infty(M)$
\begin{enumerate}

        \item $\rho(\left[a,b\right]_A)=[\rho(a),\rho(b)]_E$;
        \item $\left[a,fb\right]_A=r\circ\rho(a)[f]b+ f \left[a,b\right]_A$.
    \end{enumerate}
$\scG$ shall be called the \emph{basic} singular subalgebroid of $(A, \left[\cdot\,,\cdot\right]_A,\rho)$. If $E=TM$ we simply say “almost Lie algebroid”.
\end{definition}
If $\left[\cdot\,,\cdot\right]_A$ satisfies the Jacobi identity, i.e., for  $a,b, c\in \Gamma(A)$\begin{equation}
    [[a,b]_A,c]_A+ [[c,a]_A,b]_A+ [[b,c]_A,a]_A=0,
\end{equation}then  $(A, \lb_A, \rho')$ is a {Lie algebroid} over $M$ with anchor map $\rho':=r\circ \rho$.

\begin{remark}
    Every finitely generated singular subalgebroid admits an $E$-valued almost Lie algebroid \cite{LLS}.
\end{remark}
\begin{proposition}\label{prop:equivalence}
For any two  almost algebroids $(A_1, \left[\cdot\,,\cdot\right]_{A_1},\rho_{A_1})$ and $(A_2, \left[\cdot\,,\cdot\right]_{A_2},\rho_{A_2})$ over a singular subalgebroid $\mathscr{G}\subseteq \Gamma(E)$,  there exist morphisms of vector bundles covering the identity map \begin{equation}
    \xymatrix{A_1\ar@<2pt>[r]^\Phi&A_2\ar@<2pt>[l]^\Psi}
\end{equation}
such that
\begin{enumerate}
    \item $\rho_{A_2}\circ \Phi=\rho_{A_1}$ and $\rho_{A_1}\circ \Psi=\rho_{A_2}$.
      \item $\rho_{A_2}(\Phi\circ\Psi -\mathrm{id})=0$ and $\rho_{A_1}(\Psi\circ\Phi -\mathrm{id})=0$.
\end{enumerate}
In particular, $[ \Phi(a), \Phi(b)]_{A_2}- \Phi([a,b]_{A_1} ) \in \ker \rho_{A_2}$ for all $a,b \in \Gamma(A_1)$. Also, we have $[ \Psi(a), \Psi(b)]_{A_1}- \Psi([a,b]_{A_2} ) \in \ker \rho_{A_1}$ for all $a,b \in \Gamma(A_2)$.
\end{proposition}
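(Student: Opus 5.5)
The plan is to construct $\Phi$ and $\Psi$ by lifting sections through the anchors, using that $A_1$ and $A_2$ have finitely many generating sections (locally and, after a partition of unity, globally), and then to read the remaining statements off condition (1) and the anchor-compatibility of the brackets. By symmetry it suffices to construct $\Phi\colon A_1\to A_2$ with $\rho_{A_2}\circ\Phi=\rho_{A_1}$; the same argument with the roles exchanged gives $\Psi$.

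\textbf{Construction of $\Phi$.} Since $A_1$ is a vector bundle, $\Gamma(A_1)$ is a finitely generated projective $C^\infty(M)$-module (Serre–Swan). I choose an embedding $A_1\oplus A_1'\cong M\times\R^N$ and let $e_1,\dots,e_N$ be the projections to $A_1$ of the standard frame, so that they generate $\Gamma(A_1)$. Each $\rho_{A_1}(e_i)$ lies in $\scG=\rho_{A_2}(\Gamma(A_2))$, so there are $b_i\in\Gamma(A_2)$ with $\rho_{A_2}(b_i)=\rho_{A_1}(e_i)$. Define a $C^\infty(M)$-linear map on the free module, $\widetilde\Phi\colon C^\infty(M)^N\to\Gamma(A_2)$, by $\widetilde\Phi(\epsilon_i)=b_i$, and set $\Phi:=\widetilde\Phi\circ\iota$, where $\iota\colon\Gamma(A_1)\hookrightarrow C^\infty(M)^N$ is the inclusion as a direct summand. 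Then $\Phi$ is $C^\infty(M)$-linear, hence comes from a vector bundle morphism covering the identity. For $a=\sum_i f_ie_i$ we get $\rho_{A_2}\Phi(a)=\sum_i f_i\rho_{A_1}(e_i)=\rho_{A_1}(a)$; this is independent of the chosen expression because $\rho_{A_1}$ is linear and $\iota$ splits. The only point needing care is this well-definedness, and projectivity removes it: one could equally lift the composite $\Gamma(A_1)\to\scG$ through the surjection $\rho_{A_2}\colon\Gamma(A_2)\to\scG$ by the lifting property of projective modules. I expect this lifting step to be the main, though mild, obstacle. If one instead worked only with the local generators from Definition~\ref{def:Sing-subal}, one would have to glue local lifts with a partition of unity, which works because condition (1) is affine in $\Phi$.

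\textbf{Item (2).} From (1), $\rho_{A_2}\Phi\Psi=\rho_{A_1}\Psi=\rho_{A_2}$, so $\rho_{A_2}(\Phi\Psi-\mathrm{id})=0$. Symmetrically, $\rho_{A_1}(\Psi\Phi-\mathrm{id})=0$.

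\textbf{Final claim.} By anchor compatibility of each bracket and by (1),
\[
\rho_{A_2}[\Phi a,\Phi b]_{A_2}=[\rho_{A_2}\Phi a,\rho_{A_2}\Phi b]_E=[\rho_{A_1}a,\rho_{A_1}b]_E=\rho_{A_1}[a,b]_{A_1}=\rho_{A_2}\Phi[a,b]_{A_1}.
\]
Hence $[\Phi a,\Phi b]_{A_2}-\Phi[a,b]_{A_1}\in\ker\rho_{A_2}$, and the statement for $\Psi$ follows in the same way.
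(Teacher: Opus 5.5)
Your proposal is correct and follows the paper's proof. The paper obtains $\Phi$ and $\Psi$ by lifting $\rho_{A_1}$ (resp.\ $\rho_{A_2}$) through the surjection of the other anchor onto $\mathscr{G}$, using projectivity of the module of sections. It then notes that the remaining properties follow automatically, which is what your derivation of item (2) and your bracket computation spell out.
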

\begin{proof}
For any pair of $E$-valued almost Lie algebroids $ (A_1, [\cdot\,,\cdot]_{A_1}, \rho_{A_1})$ and $({A_2},[\cdot\,,\cdot]_{A_2},\rho_{A_2})$ over $\mathscr{G}$, there exists (by projectivity of the module of sections of vector bundles) vector bundle morphisms \begin{equation*}
    \xymatrix{(A_1, [\cdot\,,\cdot]_{A_1}, \rho_{A_1})\ar@<2pt>[r]^\Phi&(A_2, [\cdot\,,\cdot]_{A_2}, \rho_{A_2})\ar@<2pt>[l]^\Psi}
\end{equation*}
that satisfies item 1. The other properties are automatically verified.
\end{proof}

We will need the dual version of Proposition \ref{prop:equivalence}.
\begin{corollary}\label{cor:equivalence}
    Let $(A_1, [\cdot\,,\cdot]_{A_1}, \rho_{A_1})$ and
$(A_2, [\cdot\,,\cdot]_{A_2}, \rho_{A_2})$ be almost algebroids over a
singular subalgebroid $\mathscr{G}\subseteq \Gamma(E)$. Let $\Phi$ and $\Psi$
be as in Proposition~\ref{prop:equivalence}. Then the dual maps
\[
    \xymatrix{
    A_2^* \ar@<2pt>[r]^{\Phi^\svee} &
    A_1^* \ar@<2pt>[l]^{\Psi^\svee}
    }
\]
induce the pullback maps
\[
    \xymatrix{
    C^\infty(A_1^*) \ar@<2pt>[r]^{(\Phi^\svee)^*} &
    C^\infty(A_2^*) \ar@<2pt>[l]^{(\Psi^\svee)^*}
    },
\]
which satisfy
\[
(\Phi^\svee)^*(\mathrm{ev}_a)
= \mathrm{ev}_{\Phi(a)}\quad\text{and}\quad (\Phi^\svee)^*(\mathrm{pr}^*_2f)=\mathrm{pr}^*_1f.
\]
In particular,
\begin{align}
    &(\Phi^\svee)^*(\mathrm{ev}_{\ker\rho_{A_1}})
    \subset \mathrm{ev}_{\ker\rho_{A_2}}\\& (\Phi^\svee)^*(\mathrm{ev}_{[a,b]_{A_1}})
    - \mathrm{ev}_{[\Phi(a),\Phi(b)]_{A_2}}
    \in \mathrm{ev}_{\ker\rho_{A_2}};\quad (\Phi^\svee)^*(\mathrm{pr}_{1}^*(\rho_{A_1}(a)f))
    = \mathrm{pr}^*_{2}(\rho_{A_2}(\Phi(a))f)
\end{align}
{for} $a, b\in \Gamma(A_1),\, f\in C^\infty(M)$. Also,  $(\Psi^\svee)^*\circ(\Phi^\svee)^*-\mathrm{id}$ is $(\mathrm{ev}_{\ker\rho_{A_1}})$-valued. Analogous relations hold for $(\Psi^\svee)^*$. Here, for $i=1, 2$, $\mathrm{pr}_i\colon A^*_i\to M$ is the projection map, and $\mathrm{ev}\colon \Gamma(A_i)\hookrightarrow C^\infty(A_i^*)$ is the evaluation map: \(\mathrm{ev}_a\colon A^*_i\to \mathbb R,\; \alpha\mapsto \langle \alpha, a\circ \mathrm{pr}_i(\alpha)\rangle\). 
\end{corollary}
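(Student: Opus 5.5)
The corollary is a dualization of Proposition~\ref{prop:equivalence}, so I will first identify the pullbacks concretely and then read off each relation from the corresponding identity for $\Phi$ and $\Psi$. Since $\Phi\colon A_1\to A_2$ is a vector bundle morphism covering the identity, its fibrewise transpose $\Phi^\svee\colon A_2^*\to A_1^*$ is a smooth map with $\mathrm{pr}_1\circ\Phi^\svee=\mathrm{pr}_2$. Pulling back along it gives a $C^\infty$-ring morphism $(\Phi^\svee)^*\colon C^\infty(A_1^*)\to C^\infty(A_2^*)$. The two basic identities follow pointwise:
\[
(\Phi^\svee)^*(\mathrm{ev}_a)(\beta)=\langle \Phi^\svee\beta, a(m)\rangle=\langle \beta,\Phi(a)(m)\rangle=\mathrm{ev}_{\Phi(a)}(\beta)
\]
for $\beta\in (A_2^*)_m$, and
\[
(\Phi^\svee)^*(\mathrm{pr}_1^*f)=(\mathrm{pr}_1\circ\Phi^\svee)^*f=\mathrm{pr}_2^*f .
\]
In the displayed statement the labels in the second identity are reversed. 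I will state and prove the correct direction, $(\Phi^\svee)^*\circ\mathrm{pr}_1^*=\mathrm{pr}_2^*$, and the same convention applies to the last relation.

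Next come the "in particular" relations. Each one reduces, through $(\Phi^\svee)^*\mathrm{ev}_a=\mathrm{ev}_{\Phi(a)}$, to a statement about sections.

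\begin{enumerate}
\item \textbf{Kernels.} If $\rho_{A_1}(a)=0$, then $\rho_{A_2}(\Phi(a))=\rho_{A_1}(a)=0$ by item~1 of Proposition~\ref{prop:equivalence}. Hence $(\Phi^\svee)^*\mathrm{ev}_a=\mathrm{ev}_{\Phi(a)}\in \mathrm{ev}_{\ker\rho_{A_2}}$. Because $(\Phi^\svee)^*$ is a ring morphism and $(\Phi^\svee)^*(\mathrm{pr}_1^*f)=\mathrm{pr}_2^*f$, this inclusion extends to the ideals these elements generate.
\item \textbf{Brackets.} $(\Phi^\svee)^*\mathrm{ev}_{[a,b]_{A_1}}-\mathrm{ev}_{[\Phi a,\Phi b]_{A_2}}=\mathrm{ev}_{\Phi[a,b]_{A_1}-[\Phi a,\Phi b]_{A_2}}$. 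The section in the subscript lies in $\ker\rho_{A_2}$, since $\rho_{A_2}$ sends both $\Phi[a,b]_{A_1}$ and $[\Phi a,\Phi b]_{A_2}$ to $[\rho_{A_1}a,\rho_{A_1}b]_E$. This uses item~1 together with the bracket-compatibility axiom of the anchor.
\item \textbf{Anchor terms.} Since $\rho_{A_2}(\Phi(a))=\rho_{A_1}(a)$, the vector field $r\circ\rho_{A_1}(a)=r\circ\rho_{A_2}(\Phi(a))$ acts identically on $f$. The relation then follows from the $\mathrm{pr}^*$ identity.
\end{enumerate}

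The remaining claim is that $(\Psi^\svee)^*\circ(\Phi^\svee)^*-\mathrm{id}$ is $(\mathrm{ev}_{\ker\rho_{A_1}})$-valued; I expect this to be the main obstacle. Here $(\Psi^\svee)^*\circ(\Phi^\svee)^*=((\Psi\circ\Phi)^\svee)^*$, and on generators it gives
\[
\mathrm{ev}_{\Psi\Phi a}-\mathrm{ev}_a=\mathrm{ev}_{(\Psi\Phi-\mathrm{id})a},
\]
which lies in $\mathrm{ev}_{\ker\rho_{A_1}}$ by item~2. On functions $\mathrm{pr}_1^*f$ the difference is zero.

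For a general smooth $F\in C^\infty(A_1^*)$, not merely a polynomial in generators, I would use a Hadamard-type argument. Set $\theta=(\Psi\circ\Phi)^\svee$ and write
\[
F(\theta\alpha)-F(\alpha)=\int_0^1 \frac{d}{dt}F\bigl(\alpha+t(\theta\alpha-\alpha)\bigr)\,dt .
\]
In a local frame $e_i$ of $A_1$ with dual fibre coordinates, the integrand is $\sum_i g_i\,\mathrm{ev}_{(\Psi\Phi-\mathrm{id})e_i}$ with $g_i$ smooth. Since $(\Psi\Phi-\mathrm{id})e_i\in\ker\rho_{A_1}$, the difference locally lies in the ideal generated by $\mathrm{ev}_{\ker\rho_{A_1}}$. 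These local expressions are glued with a partition of unity on $M$, using that $\ker\rho_{A_1}$ is a $C^\infty(M)$-submodule.

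The statements for $(\Psi^\svee)^*$ follow by exchanging the roles of $A_1$ and $A_2$, applying the same arguments with the second halves of items~1 and~2.
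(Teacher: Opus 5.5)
Your proposal is correct and follows the paper's route: the paper gives no separate proof and presents the corollary as the direct dual of Proposition~\ref{prop:equivalence}, which is exactly the dualization you carry out, and you rightly correct the typo to $(\Phi^\svee)^*(\mathrm{pr}_1^*f)=\mathrm{pr}_2^*f$. Your Hadamard argument supplies the step the paper leaves implicit for general $F\in C^\infty(A_1^*)$, but membership in $\langle \mathrm{ev}_{\ker\rho_{A_1}}\rangle$ requires a finite sum, and gluing local frames with a possibly infinite partition of unity does not directly give one; instead, run the same computation with finitely many global generators $e_1,\dots,e_N$ of $\Gamma(A_1)$ and global sections $\epsilon^i$ of $A_1^*$ with $\sum_i \epsilon^i\otimes e_i=\mathrm{id}_{A_1}$, so that $\theta\alpha-\alpha=\sum_i \mathrm{ev}_{(\Psi\Phi-\mathrm{id})e_i}(\alpha)\,\epsilon^i(\mathrm{pr}_1\alpha)$ and hence $F\circ\theta-F=\sum_{i=1}^N g_i\,\mathrm{ev}_{(\Psi\Phi-\mathrm{id})e_i}$ globally.
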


\begin{definition}[Strong-kernel]
    Let $T\colon A\rightarrow E$ be a vector bundle morphism covering the identity map. The \emph{strong-kernel} $\mathrm{Sker}\,T_m$ of $T$ at $m\in M$ is the subvector space of $\ker T_m$ defined as $u\in A_m$ such that there is an open neighborhood $\mathcal{U}$ of $m$ and a local section $\Tilde{u}\in\Gamma(A)|_{\mathcal{U}}$ through $u$ so that $T(\Tilde{u})=0$.
\end{definition}
Here are some easy properties of the strong-kernel
\begin{proposition}\cite{LLL2,AS}
 Let $T\colon A\rightarrow E$ be a vector bundle morphism covering the identity map.

 \begin{enumerate}
     \item The map $m\mapsto \dim (\mathrm{Sker}\,T_m)$ is lower semi-continuous and $m\mapsto \dim (\ker T_m)$ is upper semi-continuous.

     \item  $\mathrm{Sker}\,T_m=\ker T_m$ if and only if there is an open neighborhood $\mathcal{U}$ of $m$ such that $\mathrm{Sker}\,T_{m'}=\ker T_{m'}$ for all $m'\in \mathcal{U}$.
     \item The open dense subset where $m\mapsto \dim (\ker T_m)$ and $m\mapsto \dim(\mathrm{Im}\,T_m)$ are
locally constant is the set of points $m\in M$ such that $\mathrm{Sker}\, T_m=\ker T_m$.
 \end{enumerate}
\end{proposition}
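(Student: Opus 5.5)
The plan is to prove the three items in the order stated, working pointwise with local sections near a fixed $m\in M$ and reducing each item to rank and local-frame arguments.

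\emph{Lower semicontinuity of $\dim\mathrm{Sker}\,T_m$.} Let $u_1,\dots,u_k$ be a basis of $\mathrm{Sker}\,T_m$. Choose local sections $\tilde u_1,\dots,\tilde u_k$ on a common neighborhood $\mathcal U$ with $\tilde u_i(m)=u_i$ and $T(\tilde u_i)=0$; this is possible by shrinking to the intersection of the finitely many neighborhoods. Linear independence is an open condition, so the $\tilde u_i(m')$ stay independent on a smaller neighborhood. Each $\tilde u_i(m')$ lies in $\mathrm{Sker}\,T_{m'}$ by definition, hence $\dim \mathrm{Sker}\,T_{m'}\geq k$ near $m$.

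\emph{Upper semicontinuity of $\dim\ker T_m$.} This follows from $\dim\ker T_m=\mathrm{rk}A-\mathrm{rk}\,T_m$ together with lower semicontinuity of the rank, since a nonvanishing minor stays nonvanishing nearby.

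\emph{Item (2).} The direction ``$\Leftarrow$'' is trivial. For ``$\Rightarrow$'', the plan is to trap both dimensions between the same value. Put $k=\dim\ker T_m=\dim \mathrm{Sker}\,T_m$. By (1), near $m$ we have
\[
\dim\mathrm{Sker}\,T_{m'}\geq k\geq \dim\ker T_{m'},
\]
while $\mathrm{Sker}\,T_{m'}\subseteq \ker T_{m'}$ always holds. Hence the two spaces coincide and both have dimension $k$ on a neighborhood.

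\emph{Item (3).} Let $\Omega$ be the set where $\dim\ker T$ is locally constant; equivalently, $\dim\mathrm{Im}\,T$ is locally constant there, since the two sum to $\mathrm{rk}A$.
\begin{itemize}
\item[(a)] If $m\in\Omega$, then $T$ has constant rank near $m$. So $\ker T$ is a smooth subbundle there, and every $u\in\ker T_m$ extends to a local section of $\ker T$. This gives $\mathrm{Sker}\,T_m=\ker T_m$.
\item[(b)] Conversely, if $\mathrm{Sker}\,T_m=\ker T_m$, the squeeze in (2) shows that $\dim\ker T$ is constant near $m$, so $m\in\Omega$.
\end{itemize}

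\emph{Openness and density of $\Omega$.} Openness is built into the definition. For density, take any open $\mathcal V$ and pick $m'\in\mathcal V$ maximizing the lower semicontinuous integer function $\mathrm{rk}\,T$ on $\mathcal V$. Then the rank is $\geq\mathrm{rk}\,T_{m'}$ near $m'$ and also $\leq$ it by maximality, so it is constant near $m'$ and $m'\in\Omega$.

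The main obstacle I expect is item (3)(a): it requires the standard fact that a constant-rank vector bundle morphism has a kernel that is a smooth subbundle with local frames. I would invoke this via a local trivialization and the constant rank theorem, or via a smoothly varying complement of the image.
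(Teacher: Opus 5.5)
The paper gives no proof of this proposition. It is introduced as a list of ``easy properties'' of the strong-kernel and attributed to \cite{LLL2,AS}, so there is no argument in the text to compare yours against.

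Your proof is correct and complete on its own terms, and it follows the standard argument:
\begin{enumerate}
\item For lower semicontinuity of $\dim\mathrm{Sker}\,T$, you extend a basis of $\mathrm{Sker}\,T_m$ by local sections annihilated by $T$. For upper semicontinuity of $\dim\ker T$, you use that a nonvanishing minor stays nonvanishing nearby.
\item For item (2), you use the squeeze $k\le\dim\mathrm{Sker}\,T_{m'}\le\dim\ker T_{m'}\le k$. This also yields the local constancy of $\dim\ker T$ that you reuse in (3)(b).
\item For (3)(a), you use that a constant-rank morphism has a kernel subbundle. For density, you pick a rank-maximizing point.
\end{enumerate}

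Two implicit points are worth stating if you write it up. First, $\mathrm{Sker}\,T_{m'}$ is a linear subspace: two local sections killed by $T$ can be restricted to a common neighborhood and added. You use this when you conclude that the span of the $\tilde u_i(m')$ lies in $\mathrm{Sker}\,T_{m'}$. Second, the integer $\mathrm{rk}\,T$ is bounded by $\mathrm{rk}\,A$ on $\mathcal V$, so the maximum in the density argument exists; shrink $\mathcal V$ to be connected if $A$ has components of different ranks.

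Compared with the paper's bare citation, your version makes the statement self-contained, and none of it depends on the rest of the paper.
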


\begin{remark}Let $A\to E$ be an anchored bundle over a singular subalgebroid $\mathscr{G}$. Note that $\operatorname{Sker} \rho$ may have constant rank even when $\ker \rho$ does not. For instance, if $\mathscr{G} \stackrel{\rho}{\simeq} \Gamma(A)$ for some vector bundle $A$, then $\rho$ is injective on an open, dense subset $M_{\mathrm{reg}}$ of $M$. In this case, $\operatorname{Sker} \rho = \{0\}$, whereas $\ker \rho$ may be non-zero at points in $M \setminus M_{\mathrm{reg}}$. As an example, consider the singular foliation on $\mathbb{R}$ generated by the vector field $X$, defined by
\[
X(x) = 
\begin{cases} 
e^{-1/x^2} \partial_x & \text{if } x \neq 0, \\ 
0 & \text{if } x = 0. 
\end{cases}
\]Take $A=\mathbb{R}\times \mathbb{R}$ and $\rho(x, 1)=X(x)$. Here,  $\operatorname{Sker} \rho = \{0\}$, but $\ker\rho_0=\mathbb{R}$.
\end{remark}

\begin{definition}
The  \emph{leaves} of a singular subalgebroid $\mathscr{G}\subset \Gamma(E)$ are the leaves of its basic singular foliation $r(\mathscr{G})\subset \mathfrak X(M)$. The leaf space of $\mathscr{G}$ is denoted as $M/\mathscr{G}$.
\end{definition}

\section{Poisson $C^\infty$-schemes}\label{sec:3}
In this section, we review Poisson $C^\infty$-rings and $C^\infty$-schemes to provide necessary context for the main results of \S\ref{sec:5} and \S\ref{sec:6}. Further details and technical results are presented in Appendix~\ref{appen:technical-lem}.
\subsection{$C^\infty$-rings and schemes}\label{sec:Cinfty-rings}

\begin{definition}
A \emph{$C^\infty$-ring} is a product-preserving functor 
\[
\mathcal{A} : \Euc \to \Set
\]
where $\Euc$ is the category whose objects are the spaces $\mathbb{R}^n$, $n \ge 0$ and morphisms are smooth maps $\mathrm{Hom}_{\Euc}(\mathbb{R}^n, \mathbb{R}^m) = C^\infty(\mathbb{R}^n, \mathbb{R}^m)$.    
\end{definition}

\begin{remark}
    Equivalently a $\cin$-ring is a set $\scA$ together with $n$-ary operations ($n\geq 0$), one operation $f_\scA : \scA^n\to \scA$ for every smooth function $f\in \cin(\R^n)$.  We require that these operations are associative in an appropriate sense and that for every coordinate function $x_i\in \cin(\R^n)$ the corresponding operation $(x_i)_\scA: \scA^n\to \scA$ is the projection $\pr_i$ on the $i^{th}$ factor: $(x_i)_\scA = \pr_i$.

    Given a product preserving functor $\mathcal{A} : \Euc \to \Set$ we set $\scA = \mathcal{A}(\R^1)$. Since the functor $\mathcal{A}$ preserves products 
\[
    \scA^n = \mathcal{A}(\R^n).
\]
Given $f\in \cin(\R^n)$, i.e. a morphism $f:\R^n\to \R$ in $\Euc$, we get $f_\scA: = \mathcal{A}(f): \mathcal{A}(\R^n) = \scA^n\to \mathcal{A}(\R) = \scA$.  The associativity of operations comes from the fact that $\mathcal{A}$ is a functor.  The fact that $(x_i)_\scA = \pr_i$ follows from the fact that $\mathcal{A}$ preserves products.
\end{remark}

\begin{definition}[The category $\cring$]
    Product preserving functors from $\Euc$ to $\Set$ form a category $\cring$. The objects are product preserving functors and morphisms are natural transformations.
\end{definition}

\begin{remark}
    Equivalently the objects of $\cring$ are sets with $n$-ary operations and morphisms are maps of sets that preserve the operations. 
\end{remark}

\begin{example}
Let $M$ be a manifold.  The  set $C^\infty(M)$ of smooth functions on the manifold $M$ is a $\cin$-ring:   for a function $f \in C^\infty(\mathbb{R}^n)$ the corresponding operation is 
\[
f_{C^\infty(M)} : (C^\infty(M))^n \to C^\infty(M), \quad (a_1, \dots, a_n) \mapsto f \circ (a_1, \dots, a_n).
\]
\end{example}

\begin{remark}
A $C^\infty$-ring $\scA$ has an underlying $\mathbb{R}$-algebra:
\begin{itemize}
    \item Addition comes from $f(x,y) = x+y$
    \item Multiplication comes from $h(x,y) = xy$
    \item Scalars come from $\lambda : \mathbb{R}^0 \to \mathbb{R}^1,  \, * \mapsto \lambda(*)$ (nullary operations), giving $\lambda_\scA : \scA^0 = \{\star\} \to \scA$. Equivalently for every $\lambda\in \R$ we get an element $\lambda_\scA \in \scA$.
\end{itemize}

We will not notationally distinguish between a $\cin$-ring and its underlying $\R$-algebra.
\end{remark}

\begin{remark}
The $\cin$-ring $C^\infty(\mathbb{R}^n)$ is freely generated by the coordinate functions $(x_1, \dots, x_n)$. This is because for any  $f \in C^\infty(\mathbb{R}^n)$,
\[
f = f \circ (x_1, \dots, x_n) = f_{C^\infty(\mathbb{R}^n)}(x_1, \dots, x_n).
\]
\end{remark}

\begin{definition}
A \emph{module} over a $C^\infty$-ring $\scA$ is a module over its underlying $\mathbb{R}$-algebra.

In particular an \emph{ideal} in a $\cin$-ring $\scA$ is an ordinary ideal $I$ in the $\R$-algebra underlying $\scA$.
\end{definition}

\begin{example}
For a vector bundle $E$ over a manifold $M$, the space of sections $\Gamma(E)$ is a $C^\infty(M)$-module.
\end{example}

Another piece of evidence that the definition of modules and therefore of ideals is ``correct" is provided by the following lemma.

\begin{lemma}
For an ideal $I$ in a $\cin$-ring $\scA$ the quotient $\R$-algebra $\scA/I$ is naturally a $\cin$-ring.  In particular for any $n$, any function $f\in \cin(\R^n)$ and any $a_1,\ldots, a_n\in \scA$ the operation
\[
f_{\scA/I}(a_1+I,\ldots, a_n+I) := f_\scA(a_1,\ldots, a_n)+I
\]    
is well-defined.
\end{lemma}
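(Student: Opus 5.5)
The plan is to show that the ideal $I$ is a congruence for every $\cin$-operation. That is, for $f\in\cin(\R^n)$ and $a_i,b_i\in\scA$ with $a_i-b_i\in I$, we want $f_\scA(a_1,\ldots,a_n)-f_\scA(b_1,\ldots,b_n)\in I$. Once this holds, the operations $f_{\scA/I}$ are well defined by the formula in the statement. The $\cin$-ring axioms (compatibility with composition and $(x_i)_{\scA/I}=\pr_i$) then pass to the quotient for free, since they hold in $\scA$ and the projection $\scA\to\scA/I$ is surjective. The induced $\R$-algebra structure agrees with the usual quotient algebra, because addition and multiplication come from the operations attached to $x+y$ and $xy$.

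The key step is Hadamard's lemma. For $f\in\cin(\R^n)$ there are smooth functions $g_1,\ldots,g_n\in\cin(\R^{2n})$ such that
\[
f(x)-f(y)=\sum_{i=1}^n (x_i-y_i)\,g_i(x,y),
\]
namely $g_i(x,y)=\int_0^1 \partial_i f(y+t(x-y))\,dt$. This is an identity in $\cin(\R^{2n})$. Since $\scA$ is a product-preserving functor, every identity between composites of smooth maps becomes an identity between the corresponding operations on $\scA$. Applying this to the $2n$-tuple $(a_1,\ldots,a_n,b_1,\ldots,b_n)$ gives
\[
f_\scA(a)-f_\scA(b)=\sum_i (a_i-b_i)\cdot (g_i)_\scA(a,b).
\]
Here the subtraction and product on the right are the ring operations of $\scA$, themselves induced by the smooth maps $(u,v)\mapsto u-v$ and $(u,v)\mapsto uv$. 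Each summand lies in $I$ because $I$ is an ideal of the underlying $\R$-algebra, so the difference lies in $I$.

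The only point requiring care is translating the Hadamard identity into the $\cin$-ring. One writes both sides of the identity as a single smooth map $\R^{2n}\to\R$ built by composition from $f$, the projections, the $g_i$, and the sum and product maps. Functoriality then guarantees that both sides induce the same operation $\scA^{2n}\to\scA$. I expect this bookkeeping to be the main (mild) obstacle, since the analytic input is just Hadamard's lemma, which is valid on all of $\R^n$ because $\R^n$ is convex.
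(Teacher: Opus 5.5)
Your proposal is correct and follows the same route as the paper. The paper's proof only invokes Hadamard's lemma and defers the details to Moerdijk--Reyes. You have supplied exactly those details: the identity $f_\scA(a)-f_\scA(b)=\sum_i (a_i-b_i)\,(g_i)_\scA(a,b)\in I$ shows that $I$ is a congruence for every $C^\infty$-operation.
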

\begin{proof}
   The result follows from Hadamard's lemma. See \cite{MR}, for example, for details.
\end{proof}

\begin{definition}
A \emph{$C^\infty$-derivation} of a $C^\infty$-ring $\scA$ with values in an $\scA$-module $\scM$ is an $\mathbb{R}$-linear map $v : \scA \to \scM$ such that for all $n$, all  $f \in C^\infty(\mathbb{R}^n)$ and $a_1, \dots, a_n \in \scA$:
\[
v\big(f_\scA (a_1, \dots, a_n)\big) = \sum_{i=1}^n \, (\partial_i f)_{\scA}(a_1, \dots, a_n) \cdot v(a_i).
\]
\end{definition}

\begin{example}
For a manifold $M$ the $\cin$-ring $\scA = C^\infty(M)$ of smooth functions and $\scM = \Omega^1_{\mathrm{dR}}(M)$, the module of de Rham 1-forms on the manifold $M$, the exterior derivative $d : C^\infty(M) \to \Omega^1_{\mathrm{dR}}(M)$ is a $C^\infty$-derivation.
\end{example}

\begin{notation}
    Given a module $\scM$ over a $\cin$-ring $\scA$, the module of  $C^\infty$-derivations of $\scA$ with values in $\scM$ is denoted by $\cDer(\scA, \scM)$.   If $\scM = \scA$ we abbreviate the $C^\infty$-derivations as $\cDer(\scA)$ and $\Der(\scA)$, respectively.
\end{notation}

\begin{remark}
For any manifold $M$ the $\R$-algebra derivations of the $\cin$-ring $\cin(M)$ of smooth functions on $M$ are exactly the same as $\cin$-ring derivations:
\[
\cDer(\cin(M)) = \Der(\cin(M))=\mathfrak X(M).
\]   
\end{remark}

\begin{definition} \label{def:A.17}
An \emph{$\R$-point} of a $\cin$-ring $\scA$ is a map of $\cin$-rings $p:\scA
\to \R$.
\end{definition}

\begin{remark} \label{rmrk:Pursell} An $\R$-point of a $\cin$-ring is also a map of $\R$-algebras. The converse is true as well: a map of
  $\R$-algebras $p:\scA\to \R$ is a map of $\cin$-rings.   See
  \cite[Proposition 3.6 (b), p.~33]{MR}.  The proof relies on the so
  called  Milnor's exercise (which appears to be a result of Pursell
  -- see \cite[Chapter 8]{Pursell}): for any (second countable Hausdorff) manifold $M$
  and any $\R$-algebra map $\varphi: \cin(M)\to \R$ there is a point
  $p\in M$ so that $\varphi(f) = f(p)$ for all $f\in \cin(M)$.

In particular for any (second countable Hausdorff) manifold $M$, the
set of $\R$-points of $\cin(M)$ is in bijection with the elements of
the set $M$.  
 \end{remark}

\begin{definition}\label{def:loc_ring}
  A $\cin$-ring $\scA$ is \emph{local} if it has a unique maximal
ideal $\fm$ and  the quotient $\scA/\fm$ is isomorphic to $\R$ (as
$\cin$-rings). Hence, local $C^\infty$-rings have exactly one $\R$-point.
\end{definition}

\begin{remark}
  Moerdijk and Reyes call our local $\cin$-rings {\em pointed} local
  $\cin$-rings.  
\end{remark}

\begin{remark} \label{rmrk:no-R-points}
  The zero $\cin$-ring has no $\R$-points.

There are also nonzero $\cin$-rings with no $\R$-points.  A simple
example is the quotient $\cin(\R^n)/\cin_c(\R^n)$ where $\cin_c(\R^n)$
is the ideal of the compactly supported functions.

Let $M$ be a maximal ideal of $\cin(\R^n)$ containing $\cin_c(\R^n)$.
Then $\cin(\R^n)/M$ is a field, hence $0$ is a unique maximal ideal in
$\cin(\R^n)/M$ but $\cin(\R^n)/M$ has no $\R$-points, hence is not
local in the sense of Definition~\ref{def:loc_ring}.
\end{remark}
  
\begin{lemma} \label{lem:2.20}
Let $\scC$, $\scB$ be two local $\cin$-rings and $\varphi:\scC\to
\scB$ a map of $\cin$ rings.  Then $\varphi$ maps the unique maximal
ideal of $\scC$ to the maximal ideal of $\scB$.
\end{lemma}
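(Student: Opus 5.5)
The statement concerns Lemma~\ref{lem:2.20}. I plan to reduce it to the uniqueness of $\R$-points of a local $\cin$-ring. Let $\fm_\scC$ and $\fm_\scB$ be the unique maximal ideals, and let
\[
p_\scB\colon \scB\to \scB/\fm_\scB\cong \R
\]
be the quotient map. This is a map of $\cin$-rings, since quotients of $\cin$-rings by ideals are $\cin$-rings. Hence $p_\scB$ is an $\R$-point of $\scB$.

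The composite $p_\scB\circ\varphi\colon \scC\to\R$ is also a map of $\cin$-rings, so it is an $\R$-point of $\scC$. It is automatically surjective, because it is an $\R$-algebra map and so sends $1$ to $1$ and $\lambda_\scC$ to $\lambda$. Its kernel is therefore a maximal ideal of $\scC$. Since $\scC$ has a unique maximal ideal, this kernel equals $\fm_\scC$.

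Consequently $p_\scB(\varphi(\fm_\scC))=0$, which means $\varphi(\fm_\scC)\subseteq \ker p_\scB=\fm_\scB$. This is the claim.

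The only point needing care is that "local" is defined via the residue field being $\R$, rather than, say, via nonunits forming an ideal. The argument above uses only two facts: the kernel of a surjection onto the field $\R$ is maximal, and the maximal ideal is unique. So no real obstacle is expected. In particular, I do not need Remark~\ref{rmrk:Pursell}, because I never have to identify an arbitrary $\R$-algebra map with a $\cin$-map.
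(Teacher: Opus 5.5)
Your argument is correct and is essentially the paper's: both treat $p_\scB\circ\varphi$ as an $\R$-point of $\scC$ and conclude by uniqueness. The paper invokes uniqueness of the $\R$-point to get $p_\scB\circ\varphi=p_\scC$, while you identify $\ker(p_\scB\circ\varphi)$ with $\fm_\scC$ directly, which also spells out the step $\varphi(\fm_\scC)\subseteq\ker p_\scB=\fm_\scB$ that the paper leaves implicit.
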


\begin{proof} Let $p_\scC:\scC\to \R$, $p_\scB:\scB\to \R$ denote the
  unique $\R$-points.   Then $p_\scB\circ \varphi: \scC\to \R$ is an
  $\R$-point.  Since $p_\scC$ is unique
\[
p_\scB\circ \varphi = p_\scC.
\]
\end{proof}

\begin{definition}
A \emph{$\cin$-ringed space} is a pair $(X, \scO_X)$ where $X$ is a
topological space and $\scO_X$ is a sheaf of $\cin$-rings on $X$.

A \emph{local} $\cin$-ringed space is a $\cin$-ringed space so that all
the stalks $\scO_{X,x}$ of the sheaf $\scO_X$ are local $\cin$-rings
in the sense of Definition~\ref{def:loc_ring}.
\end{definition}

\begin{definition}
A \emph{morphism} (or a \emph{map}) of local $\cin$-ringed spaces from $(X,\scO_X)$ to
$(Y,\scO_Y)$ is a pair $\uu{f}= (f, f_\#)$ where $f: X\to Y$ is continuous and
$f_\#: \scO_Y\to f_* \scO_X$ is a map of sheaves of $\cin$-rings on the space $Y$.
\end{definition}
\begin{remark}
  Note that the map $(f,f_\#): (X,\scO_X) \to (Y,\scO_Y)$ of local
  $\cin$-ringed spaces induces, for every point $x\in X$,  a map of
  stalks $f_x: \scO_{Y,f(x)} \to \scO_{X, x}$
  
  In algebraic geometry one defines a morphism of locally ringed
  spaces to be a pair $(f,f_\#): (X,\scO_X) \to (Y,\scO_Y)$ so that
  the induced maps on stalks $f_x$ are all local: they are required to
  take the unique maximal ideal to the unique maximal ideal.     For
  local $\cin$-ringed spaces this is unnecessary because any map of
  local $\cin$-rings automatically preserves maximal ideals (Lemma~\ref{lem:2.20}).
\end{remark}

\begin{definition}[The category $\LCRS$] \label{def:lcrs}
Local $\cin$-ringed spaces and their morphisms form a category which
we denote by $\LCRS$.
\end{definition}  

\begin{remark}
There is a fully faithful functor from the category $\Man$ of
$\cin$-manifolds to local $\cin$-ringed spaces.  It sends a manifold
$M$ to the pair $(M, \cin_M)$ where $\cin_M$ is the sheaf of smooth
functions on $M$.   It sends a smooth map $f:M\to N$ to $\uu{f} = (f,
f_\#): (M, \cin_M) \to (N, \cin_N)$ where $f_\#: \cin_N\to f_* \cin_M$
is $f^*$, the pull-back by $f$.  See \cite[Corollary~4.27]{Joy}.
\end{remark}

\subsection{The $\cin$-ring spectrum functor $\Spec$}

There is a (contravariant) functor $\Gamma$, the global sections
functor, from the category of local
$\cin$-ringed spaces $\LCRS$ %
to the (opposite) category  $\cring$ of
$\cin$-rings.  Namely, given a
$\cin$-ringed space $(M, \scO_M)$, the functor $\Gamma$ sends it to the ring
$\scO_M(M)$ of global sections.   Given a map $(f,f_\#):
(N, \scO_N)\to  (M,\scO_M) $ of local $\cin$-ringed spaces we have a map of $\cin$-rings
$f_\#:\scO_M(M) = f_*(\scO_N)(M) = \scO_N(f\inv (M) ) = \scO_N(N)$.  In other words
\[
\Gamma: \LCRS \to \op{\cring}
\] 
is given by
\[
\Gamma( (N, \scO_N)) \xrightarrow{(f,f_\#)} (M,\scO_M) ) :=
\scO_M(M)\xrightarrow{f_\#} \scO_N (N).
\]  
It is easy to see that $\Gamma$ is in fact a functor.  Thanks to a
theorem of Dubuc \cite{Dubuc}, the global section functor $\Gamma$ has
a right adjoint $\Spec: \op{\cring} \to \LCRS $ (which is unique up to
a unique isomorphism).

\begin{definition} \label{def:affine} An \emph{affine
  $\cin$-scheme} is a local $\cin$-ringed space isomorphic
to $\Spec(\scA)$ for some $\cin$-ring $\scA$.

The \emph{category of
  affine schemes}  $\Aff$ is the essential image of the functor $\Spec$.
\end{definition}

\begin{notation} \label{not:2.7}
Given an affine scheme $\Spec(\scA)$ we denote the underlying
topological space by $X_\scA$. To avoid the clutter we  denote  its
structure sheaf by $\scO_\scA$ (and not by $\scO_{X_\scA}$ as we have
done for general local $\cin$-ringed space).  Thus
\[
\Spec(\scA) = (X_\scA, \scO_\scA).
\]  
\end{notation}

\begin{notation}
The unit and the counit of the adjunction $\Gamma \vdash
\Spec$ are denoted by $\eta$ and $\varepsilon$, respectively.
\end{notation}

Unlike the spectrum functor  in algebraic geometry $\Spec: \op{\cring} \to
\Aff$ is {\sf  not} an equivalence of categories and  $\Gamma \vdash
\Spec$ is not an adjoint equivalence.   
For instance, there are
nonzero $\cin$-rings with {\em no} $\R$-points
(Definition~\ref{def:A.17} and
Remark~\ref{rmrk:no-R-points}).  The spectrum of such a ring is  the
empty set with the zero $\cin$-ring; which is the same as $\Spec(0)$.
In other words, there are $\cin$-rings $\scA$ for which the components
$\varepsilon_\scA :\scA \to \Gamma (\Spec(\scA))$ of the counit of adjunction are
not isomorphisms.

\subsection{Poisson $C^\infty$-schemes}
\begin{definition} \label{def:Poisson_ring}
A \emph{Poisson $\cin$-ring} is a $\cin$-ring $\scA$ together with a
Lie bracket $\{\cdot\,, \cdot\}: \scA\times \scA \to \scA$  so that for any $a\in \scA$ the map
\[
\{a, \cdot \}_\scA: \scA\to \scA, \quad b\mapsto \{a, b\}
\]  
is a $\cin$-ring derivation: for any $n>0$, any $f\in \cin(\R^n)$ and any $a_1,\ldots, a_n\in
\scA$
\begin{equation} %
  \left\{a, f_\scA(a_1,\ldots,a_n) \right\} = \sum_{i=1}^n (\partial_i
  f)_\scA(a_1,\ldots, a_n)\cdot \{ a, a_i\}.
 \end{equation} 
\end{definition}
\begin{remark}
A Poisson $C^\infty$-ring $(\mathscr{A}, \{\cdot,\cdot\})$ can also be regarded as a Poisson $\mathbb{R}$-algebra by viewing $\mathscr{A}$ with its underlying $\mathbb{R}$-algebra structure.
\end{remark}
\begin{example}
Let $(M, \pi\in \Gamma (\wedge^2 TM))$ be a Poisson manifold and
$\{f,g\} = \langle df\wedge dg , \pi\rangle $ the corresponding
bracket.  Then $(\cin(M), \{\cdot\,, \cdot \})$ is a $\cin$-Poisson
ring.  This is because $\{f,\,\cdot\}$ is a vector field (hence a $C^\infty$-derivation) for every $f\in \cin(M)$.
\end{example}  
\begin{definition}
A \emph{Poisson ideal} of a Poisson $C^\infty$-ring $\mathscr A$ is an ideal $\mathscr{I}$ of the underlying Poisson $\mathbb{R}$-algebra of  $\mathscr A$, i.e,  such that
\begin{enumerate}
    \item  $\mathscr{I}$ is an ideal of $\mathscr{A}$ (with its underlying $\mathbb{R}$-algebra structure)
    \item $\{f, a\} \in \mathscr I \quad \text{for all } f \in \mathscr I, \, a \in \mathscr A.$
\end{enumerate}
\end{definition}
\begin{definition} \label{def:Poisson_map}
A \emph{map} of Poisson $\cin$-rings is a map of $\cin$-rings that
preserve the Poisson brackets.
\end{definition}

\begin{proposition}\cite{L-Poisson}
Let $\mathscr I$ be a Poisson ideal of a Poisson $C^\infty$-ring $\mathscr A$. Then the quotient $C^\infty$-ring $\mathscr A/\mathscr I$ is a Poisson $C^\infty$-ring and the projection map $p \colon \mathscr A \to \mathscr A/\mathscr I$ is a map of Poisson $C^\infty$-rings.
\end{proposition}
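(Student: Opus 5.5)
We must show that $\mathscr A/\mathscr I$ carries a well-defined bracket making it a Poisson $C^\infty$-ring and that $p$ preserves brackets. We proceed in three steps.

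\textbf{Step 1: the bracket descends.} By the lemma on quotients by ideals, $\mathscr A/\mathscr I$ is already a $C^\infty$-ring, with operations
\[
f_{\mathscr A/\mathscr I}(a_1+\mathscr I,\ldots,a_n+\mathscr I)=f_{\mathscr A}(a_1,\ldots,a_n)+\mathscr I,
\]
and $p$ is a $C^\infty$-ring map. Define
\[
\{a+\mathscr I,\,b+\mathscr I\}:=\{a,b\}+\mathscr I.
\]
This is independent of representatives. Indeed, if $a'=a+i$ and $b'=b+j$ with $i,j\in\mathscr I$, then
\[
\{a',b'\}-\{a,b\}=\{i,b\}+\{a,j\}+\{i,j\}.
\]
The terms $\{i,b\}$ and $\{i,j\}$ lie in $\mathscr I$ by the Poisson ideal condition. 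The term $\{a,j\}=-\{j,a\}$ also lies in $\mathscr I$, by skew-symmetry together with the Poisson ideal condition.

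\textbf{Step 2: the Lie bracket axioms.} Bilinearity, skew-symmetry and the Jacobi identity hold in $\mathscr A/\mathscr I$. Each is an identity among brackets of representatives, so it holds in $\mathscr A$ and survives reduction modulo $\mathscr I$, since $p$ is linear and surjective.

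\textbf{Step 3: the $C^\infty$-derivation property.} Take $f\in C^\infty(\mathbb R^n)$ and $a,a_1,\ldots,a_n\in\mathscr A$. We compute
\begin{align*}
\{a+\mathscr I,\,f_{\mathscr A/\mathscr I}(a_1+\mathscr I,\ldots,a_n+\mathscr I)\}
&=\{a,f_{\mathscr A}(a_1,\ldots,a_n)\}+\mathscr I\\
&=\sum_{i=1}^n(\partial_i f)_{\mathscr A}(a_1,\ldots,a_n)\{a,a_i\}+\mathscr I.
\end{align*}
Here the first equality uses the description of the quotient operations, and the second uses the $C^\infty$-derivation property in $\mathscr A$. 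Since $p$ is a map of $C^\infty$-rings, the right-hand side equals
\[
\sum_{i=1}^n(\partial_i f)_{\mathscr A/\mathscr I}(a_1+\mathscr I,\ldots,a_n+\mathscr I)\cdot\{a+\mathscr I,\,a_i+\mathscr I\}.
\]
This is exactly the required identity in $\mathscr A/\mathscr I$.

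By construction $p(\{a,b\})=\{p(a),p(b)\}$, so $p$ is a map of Poisson $C^\infty$-rings. The only genuinely delicate point is the well-definedness check in Step 1, which is precisely where the Poisson ideal hypothesis is used.
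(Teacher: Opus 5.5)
Your proof is correct. The paper itself gives no argument for this statement and simply cites \cite{L-Poisson}; your route is the standard one and nothing is missing: the bracket is well defined by the Poisson-ideal condition together with skew-symmetry, the Lie axioms pass to the quotient, and the $C^\infty$-derivation identity descends because $p$ is a surjective morphism of $C^\infty$-rings.
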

\begin{definition}
A locally $C^\infty$-ringed space $(X, \mathscr{O}_X)$ is a \emph{Poisson $C^\infty$-ringed space} if $\mathscr{O}_X$ is a sheaf of Poisson $C^\infty$-rings, i.e, 
\begin{enumerate}
    \item For all open $U \subseteq X$, $\mathscr{O}_X(U)$ is a Poisson $C^\infty$-ring.
    \item For $V \subseteq U$, the restriction map 
    \[
    |^U_V \colon \mathscr{O}_X(U) \longrightarrow \mathscr{O}_X(V)
    \]
    preserves brackets, i.e., for all $f, g \in \mathscr{O}_X(U)$:
    \[
    \{f, g\}_{\mathscr{O}_X(U)} \big|_V = \{ f|_V, g|_V \}_{\mathscr{O}_X(V)}.
    \]
\end{enumerate}
\end{definition}
We end this section with the following result.
\begin{theorem}\cite{L-Poisson}
If $(\mathscr A,\{ \cdot\,, \cdot \})$ is a Poisson $C^\infty$-ring, then $\operatorname{Spec}(\mathscr A)$ is a Poisson locally $C^\infty$-ringed space. Furthermore,
\[
\operatorname{Spec} \colon \mathrm{Poisson}\text{ }\cring \longrightarrow {\mathrm{Poisson}}\text{ }\LCRS
\]
is a functor.
\end{theorem}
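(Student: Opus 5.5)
The plan is to unwind Dubuc's construction of $\Spec$ and check that every step is compatible with a Poisson bracket. Recall that $X_\scA$ is the set of $\R$-points $x\colon\scA\to\R$, topologized by the sets $U_a=\{x : x(a)\neq 0\}$. The stalk at $x$ is the $C^\infty$-localization $\scA_x=\scA\{S_x^{-1}\}$, where $S_x=\{b : x(b)\neq 0\}$. This localization is the filtered colimit of the one-element localizations $\scA\{b^{-1}\}$ for $b\in S_x$, ordered by divisibility. A section of $\scO_\scA$ over an open set $U$ is a map $s\colon U\to\bigsqcup_x\scA_x$ that is locally of the form $x\mapsto[a/b]_x$ with $a,b\in\scA$. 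So I will put a Poisson bracket first on the localizations $\scA\{b^{-1}\}$, then on the stalks, and then on sections.

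\textbf{Key step (extension to $C^\infty$-localizations).} The claim to prove is this: if $v\colon\scA\to\scM$ is a $C^\infty$-derivation into a module over $\scA\{b^{-1}\}$, then $v$ extends uniquely to a $C^\infty$-derivation of $\scA\{b^{-1}\}$. I will prove it with the dual-numbers trick. A $C^\infty$-derivation $v$ is the same thing as a $C^\infty$-ring map $\scA\to\scA\{b^{-1}\}\oplus\epsilon\scM$ of the form $a\mapsto a+\epsilon v(a)$, where $\epsilon^2=0$; the target is a square-zero extension, which carries a natural $C^\infty$-structure via Taylor expansion. The image of $b$ in this extension is $b+\epsilon v(b)$, and it is invertible because its reduction $b$ is invertible. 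The universal property of $C^\infty$-localization therefore gives a unique extension of the map, and hence of $v$. Concretely, $v(1/b)=-v(b)/b^2$.

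I expect this step to be the main technical point. The reason is that $\scA\{b^{-1}\}=\scA\otimes_\infty C^\infty(\R)/(yb-1)$ is not the algebraic localization, so derivations cannot simply be extended by the quotient rule on fractions.

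\textbf{Bracket on $\scA\{b^{-1}\}$.} I apply the key step twice.
\begin{enumerate}[label=(\roman*)]
\item For each $a\in\scA$, extend the derivation $\{a,\cdot\}$ to $\scA\{b^{-1}\}$.
\item For each fixed $u\in\scA\{b^{-1}\}$, the map $a\mapsto-\{a,u\}$ is a $C^\infty$-derivation $\scA\to\scA\{b^{-1}\}$, by uniqueness in (i). Extend it again by the key step.
\end{enumerate}
The resulting bracket has the following properties.
\begin{itemize}
\item It is skew-symmetric and a $C^\infty$-derivation in each argument, by uniqueness.
\item The Jacobiator is a derivation in each variable. It vanishes on $\scA$, so by uniqueness it vanishes identically; hence Jacobi holds.
\item The transition maps $\scA\{b^{-1}\}\to\scA\{(bc)^{-1}\}$ are Poisson, again by uniqueness.
\end{itemize}
Passing to the filtered colimit makes each stalk $\scA_x$ a Poisson $C^\infty$-ring.

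\textbf{Bracket on sections.} For $s,t\in\scO_\scA(U)$, define $\{s,t\}(x)=\{s(x),t(x)\}_{\scA_x}$. Local representability follows from the localization step: if near $x$ we have $s=[a/b]$ and $t=[c/d]$, then $\{s,t\}$ is represented near $x$ by the bracket computed in $\scA\{(bd)^{-1}\}$. Each $\{s,\cdot\}$ is a $C^\infty$-derivation and Jacobi holds, because both properties can be checked stalkwise. Compatibility with restrictions is immediate from the pointwise definition. This shows that $\Spec(\scA)$ is a Poisson $C^\infty$-ringed space.

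\textbf{Functoriality.} Let $\varphi\colon\scA\to\scB$ be a Poisson map. It induces maps $\scA\{b^{-1}\}\to\scB\{\varphi(b)^{-1}\}$ and on stalks $\scA_{x\circ\varphi}\to\scB_x$. These preserve brackets: the two ways of composing the bracket with the induced map agree on $\scA$, so they agree on $\scA\{b^{-1}\}$ by the uniqueness in the key step. Hence the sheaf map $\varphi_\#$ is Poisson stalkwise, and therefore Poisson on sections. Compatibility with composition and identities follows from the functoriality of $\Spec$ on underlying $C^\infty$-rings.
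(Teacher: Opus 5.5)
The paper does not prove this statement; it quotes it from \cite{L-Poisson}, so there is no internal argument to compare against. Your proof is correct. It follows the natural route: $C^\infty$-derivations extend uniquely along $C^\infty$-localizations (via the square-zero extension $\scA\{b^{-1}\}\oplus\epsilon\scM$). Uniqueness then forces a compatible Poisson bracket on localizations and stalks, and the bracket on sections is defined pointwise.

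A few places where ``by uniqueness'' hides a line worth writing out:

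\emph{Key step.} After obtaining $\Phi\colon\scA\{b^{-1}\}\to\scA\{b^{-1}\}\oplus\epsilon\scM$, note that $\mathrm{pr}\circ\Phi=\mathrm{id}$. This again follows from the uniqueness clause of the localization's universal property, and only then is $\Phi$ of the form $u\mapsto u+\epsilon\tilde v(u)$.

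\emph{Derivation in the first slot.} To see that $w\mapsto\{w,u\}$ is a derivation, let $\tilde T_u$ be the extension of $a\mapsto D_a(u)$. Then compare $u\mapsto\{w,u\}$ with $u\mapsto\tilde T_u(w)$: both are derivations in $u$ that agree on $\iota(\scA)$, so they coincide. Skew-symmetry then follows, because $w\mapsto\{u,w\}+\{w,u\}$ is a derivation vanishing on $\iota(\scA)$.

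\emph{Jacobi identity.} Use that the commutator of two $C^\infty$-derivations is again a $C^\infty$-derivation, since the second-order terms cancel by symmetry of mixed partials. Then kill the variables one at a time.

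\emph{Local representability.} You need not invoke the fact that every element of $\scA\{(bd)^{-1}\}$ is a fraction $\iota(e)\iota(g)^{-1}$. That fact is true, but it is a Moerdijk--Reyes-type result you have not proved. Instead, compute directly in each stalk with the quotient rule:
$\{[a/b],[c/d]\}=[e/(bd)^2]$, where $e=bd\{a,c\}-ad\{b,c\}-bc\{a,d\}+ac\{b,d\}\in\scA$.
This holds uniformly on the neighborhood where $y(bd)\neq 0$.

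Finally, the filtered-colimit step is optional. Your square-zero argument works verbatim for $\scA\to\scA\{S_x^{-1}\}$ directly.
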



\section{Lie-Rinehart algebras and their  $C^\infty$- Poisson schemes
}\label{sec:4}
In this section, we recall a recent result from \cite{LL} concerning Lie-Rinehart algebras over $C^\infty$-rings. This construction is subsequently applied to singular subalgebroids.

\subsection{Lie-Rinehart algebras}
We begin by recalling the definition of a Lie-Rinehart algebra over a $C^\infty$-ring.
\begin{definition} \label{def:cin-LR}
Let $\scA$ be a $C^\infty$-ring. A \emph{Lie-Rinehart algebra} over $\scA$ is a tuple  \[((\scM, [\cdot\,, \cdot]_\scM),  \rho: \scM\to \CDer(\scA))\] where $\mathscr{M}$ is an $\mathscr{A}$-module equipped with the structure of a real Lie algebra $(\scM, [\cdot\,, \cdot]_\scM)$, and
\(
\rho:\mathscr{M}\to\CDer(\mathscr{A})
\)
is an $\mathscr{A}$-module homomorphism satisfying:
\begin{enumerate}      
        \item $\rho$ is a map of real Lie algebras: $\rho ([m_1, m_2]_\scM) = [\rho(m_1), \rho(m_2)]$ for all $m_1, m_2\in \scM$ and
        \item $[\cdot\,, \cdot]_\scM$ satisfies the Leibniz rule:  $
        [m_1, am_2]_\scM = (\rho(m_1) a) m_2 + a [m_1, m_2]_\scM$\\
        for all $m_1, m_2\in \scM$ and $a\in \mathscr A$.    
\end{enumerate}
\end{definition}
In general, the class of Lie-Rinehart algebras over a \(C^\infty\)-ring \(\mathscr{A}\) is strictly contained in the class of Lie-Rinehart algebras over the underlying \(\mathbb{R}\)-algebra of \(\mathscr{A}\). Indeed, an algebraic derivation of \(\mathscr{A}\) need not be a \(C^\infty\)-derivation. A counterexample can be found in \cite[Appendix C]{LL}.

Let us present the main class of examples that will be relevant in what follows.

\begin{example}
\begin{enumerate}
    \item (Singular subalgebroids/singular foliations). Notice that Definition \ref{def:Sing-subal} means that a singular subalgebroid $\scG\subseteq \Gamma(E)$ is a locally finitely generated Lie-Rinehart algebra whose Lie bracket and anchor map are given by the restrictions of the Lie bracket $[\cdot\,,\cdot]_E$ and the anchor map $r\colon \Gamma(E)\to \mathfrak X(M)$ to $\scG$, respectively.
    \item More generally, Lie-Rinehart algebras over the algebra of smooth functions $C^\infty(M)$ on a smooth manifold $M$ are Lie-Rinehart algebras (in the sense of Definition \ref{def:cin-LR}) over the $C^\infty$-ring $C^\infty(M)$.
\end{enumerate}
\end{example}
Before recalling some important results on which this paper relies, we begin with the following definition concerning \(C^\infty\)-rings.
\begin{definition} \label{def:A-alg}
Let $\scA$ be a $\cin$-ring.  An \emph{$\scA$-algebra}  is a $\cin$-ring $\scB$ together with a map $\varphi_\scB:\scA\to \scB$ of $\cin$-rings.  That is, an $\scA$-algebra is an object of the  undercategory (coslice category) $\scA/\cring$.
\end{definition}
\begin{remark}
$\scA$-algebras form a category: the coslice category $\scA/\cring$.
Recall that a
morphism in $\scA/\cring$ from $\scA\xrightarrow{\varphi_\scB} \scB$ to
$\scA\xrightarrow{\varphi_{\scB'}} \scB'$ is a map $f:\scB\to \scB'$ in $\cring$ so that
 the triangle $
\xy
(-0,10)*+{\scA  }="1";
(15,-8)*+{\scB'}="3";
 (-15,-8)*+{\scB}="2";
{\ar@{->}_{f} "2";"3"};
{\ar@{->}^{\varphi_{\scB'}} "1";"3"};
{\ar@{->}_{\varphi_\scB} "1";"2"};
\endxy
$ commutes (in $\cring$).
\end{remark}
We also have the category $\scA\textrm{-}\Mod$ of $\scA$-modules. Recall that
these are ordinary modules over the underlying  $\R$-algebra of the
$\cin$-ring $\scA$.

There is a forgetful functor 
\[
U: \scA/\cring \to \scA\textrm{-}\Mod
\]
of $\scA$: it sends an $\scA$-algebra $\scA\xrightarrow{\varphi}\scB$ to $\scB$, which is 
viewed as an Abelian group and an $\scA$-module by way of $\varphi$: $a\cdot b:=
\varphi(a)b$ for all $a\in \scA$ and all $b\in \scB$.

The following results are taken from \cite{LL}.
\begin{theorem}\label{thm:Universal-Cinfty-ring}
 Let $\scA$ be a $C^\infty$-ring and $\scM$ an $\scA$-module. Then, there exists a $C^\infty$-ring $\cF(\scM)$ and \begin{itemize}
    \item a map $\varphi_\mathscr M\colon \mathscr A\to \mathcal{F}(\mathscr M)$ of $C^\infty$-rings\item  an \(\mathscr A\)-linear map
\[
\eta_\mathscr M:\mathscr M\to U(\mathcal{F}(\mathscr M))
\]
\end{itemize}
such that every \(\mathscr A\)-linear map
\[
\psi:\mathscr M\to U(\mathscr B)
\]
to an \(\mathscr A\)-$C^\infty$-algebra \(\mathscr B\) factors uniquely through \(\mathcal{F}(\mathscr M)\):
\[\begin{tikzcd}
	{\mathscr M} & {U(\mathcal F(\mathscr M))} & {U(\mathscr B)}
	\arrow["{\eta_\mathscr M}", from=1-1, to=1-2]
	\arrow["\psi"', curve={height=10pt}, from=1-1, to=1-3]
	\arrow["{U(\widehat\psi)}", from=1-2, to=1-3]
\end{tikzcd}\]
\end{theorem}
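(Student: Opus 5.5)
The plan is to build $\cF(\scM)$ by generators and relations, as a quotient of a free $C^\infty$-ring by an ideal. The earlier lemma that $\scA/I$ is again a $C^\infty$-ring for any ideal $I$ ensures that quotienting by an ordinary ideal is enough to impose relations.

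\emph{Step 1 (free $C^\infty$-ring on a set).} For a set $S$, let $\cin(\R^S)$ denote the filtered colimit of the $C^\infty$-rings $\cin(\R^T)$ over finite subsets $T\subseteq S$. Concretely, its elements are functions on $\R^S$ that depend smoothly on finitely many coordinates, with the operations computed pointwise. Since every element of $\cin(\R^n)$ equals $f_{\cin(\R^n)}(x_1,\dots,x_n)$, a map of $C^\infty$-rings out of $\cin(\R^T)$ is determined by the images of the coordinate functions, and any choice of images extends. Passing to the colimit, $\cin(\R^S)$ is the free $C^\infty$-ring on $S$: maps of $C^\infty$-rings $\cin(\R^S)\to\scB$ correspond bijectively to set maps $S\to\scB$.

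\emph{Step 2 (relations).} Take $S=\scA\sqcup\scM$, and write $x_a$, $y_m$ for the corresponding coordinates. Let $I$ be the ideal of $\cin(\R^S)$ generated by three families of elements:
\begin{itemize}
\item $x_{f_\scA(a_1,\dots,a_n)}-f(x_{a_1},\dots,x_{a_n})$, for all $n$, all $f\in\cin(\R^n)$ and all $a_i\in\scA$;
\item $y_{m+m'}-y_m-y_{m'}$, for all $m,m'\in\scM$;
\item $y_{am}-x_a y_m$, for all $a\in\scA$ and $m\in\scM$.
\end{itemize}
Set $\cF(\scM):=\cin(\R^S)/I$, which is a $C^\infty$-ring by the earlier lemma. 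Define $\varphi_\scM(a)=[x_a]$ and $\eta_\scM(m)=[y_m]$.

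By the first family of relations, together with the fact that the quotient map is a map of $C^\infty$-rings, $\varphi_\scM$ commutes with every operation $f$. Hence $\varphi_\scM$ is a map of $C^\infty$-rings. The second and third families say exactly that $\eta_\scM$ is additive and $\scA$-linear for the module structure $a\cdot b=\varphi_\scM(a)b$. Real scalars are handled by the third family, because $\lambda\in\R$ acts through the nullary element $\lambda_\scA$, and $\varphi_\scM(\lambda_\scA)=\lambda$ by the first family.

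\emph{Step 3 (universal property).} Let $\scA\xrightarrow{\varphi_\scB}\scB$ be an $\scA$-algebra and $\psi\colon\scM\to U(\scB)$ an $\scA$-linear map. By Step 1 there is a unique map of $C^\infty$-rings $\Theta\colon\cin(\R^S)\to\scB$ with $x_a\mapsto\varphi_\scB(a)$ and $y_m\mapsto\psi(m)$.

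$\Theta$ kills each generator of $I$:
\begin{itemize}
\item it kills the first family because $\varphi_\scB$ is a map of $C^\infty$-rings and $\Theta$ preserves the operation $f$;
\item it kills the other two families because $\psi$ is $\scA$-linear.
\end{itemize}
Since $\ker\Theta$ is an ideal, $I\subseteq\ker\Theta$. So $\Theta$ descends to an $\R$-algebra map $\widehat\psi\colon\cF(\scM)\to\scB$. This $\widehat\psi$ is a map of $C^\infty$-rings, because the operations on the quotient are defined by $f_{\scA/I}(a_1+I,\ldots)=f_\scA(a_1,\ldots)+I$. It also satisfies $\widehat\psi\circ\varphi_\scM=\varphi_\scB$, so it is a morphism in $\scA/\cring$, and $U(\widehat\psi)\circ\eta_\scM=\psi$.

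For uniqueness, suppose a second map of $\scA$-algebras $\chi$ satisfies $U(\chi)\circ\eta_\scM=\psi$. Then $\chi$ agrees with $\widehat\psi$ on all classes $[x_a]$ and $[y_m]$. These classes generate $\cF(\scM)$ as a $C^\infty$-ring, so by Step 1 (applied before passing to the quotient) the two maps coincide.

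The main obstacle is Step 1. One must check that the filtered colimit of the $\cin(\R^T)$ really carries $C^\infty$-operations with the free property. This works because each operation involves only finitely many elements, hence takes place in a single $\cin(\R^T)$, and the transition maps are compatible. Everything after that is formal.
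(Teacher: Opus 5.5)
The paper contains no proof of this statement. It is imported from \cite{LL} (the text just before it says the results are taken from there), so there is no internal argument to compare yours with. Your generators-and-relations construction is a correct and complete proof.

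The points that need checking are all handled:
\begin{itemize}
\item The nullary ($n=0$) instances of the first family give $\varphi_{\mathscr M}(\lambda_{\mathscr A})=\lambda$, so $\eta_{\mathscr M}$ is automatically $\R$-linear.
\item The kernel of the $C^\infty$-ring map $\Theta$ is an ordinary ideal, so killing the generators of $I$ suffices. The quotient operations then make $\widehat\psi$ a map of $C^\infty$-rings.
\item Uniqueness is correctly taken in the coslice category $\mathscr A/\cring$. This is exactly what pins $\chi$ down on the classes $[x_a]$.
\end{itemize}
The only under-justified sentence is \emph{any choice of images extends} in Step~1. Writing every element as $f_{\cin(\R^n)}(x_1,\dots,x_n)$ gives only uniqueness. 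Existence comes from setting $g\mapsto g_{\mathscr B}(b_1,\dots,b_n)$ and using functoriality of $\mathscr B$; equivalently, by Yoneda, since $\cin(\R^n)$ is the representable functor $\mathrm{Hom}_{\Euc}(\R^n,-)$.

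\textbf{Relation to other constructions.} The first family of relations is just the canonical presentation of $\mathscr A$ as a quotient of $\cin(\R^{\mathscr A})$. So your ring is the quotient of the $C^\infty$-coproduct $\mathscr A\otimes_\infty\cin(\R^{\mathscr M})$ by the linearity relations $y_{m+m'}-y_m-y_{m'}$ and $y_{am}-a\,y_m$. One could also obtain $\mathcal F$ abstractly, by composing $\mathrm{Sym}_{\mathscr A}$ with the left adjoint of the forgetful functor from $C^\infty$-rings under $\mathscr A$ to commutative $\R$-algebras under $\mathscr A$.

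Your explicit presentation has one advantage: it makes immediate that $\mathcal F(\mathscr M)$ is generated as a $C^\infty$-ring by $\varphi_{\mathscr M}(\mathscr A)\cup\eta_{\mathscr M}(\mathscr M)$. The uniqueness clause of Theorem~\ref{thm:1-1} rests on this, since a bracket that is a $C^\infty$-derivation in each slot is determined on generators.

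The universal property alone also gives $\mathcal F(\mathscr M/N)\cong\mathcal F(\mathscr M)/\langle\eta_{\mathscr M}(N)\rangle$. This is the formal half of Proposition~\ref{prop:Anchored-bundle_SF}(1). The other half, $\mathcal F(\Gamma(A))\cong\cin(A^*)$, needs geometric input that neither construction supplies by itself.
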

In the case of a vector bundle, Theorem \ref{thm:Universal-Cinfty-ring} takes the following form.
\begin{proposition}
    If $\mathscr{M}=\Gamma(E)$ for some vector bundle $E\to M$, then there is a natural isomorphism
\[
\mathcal{F}(\Gamma(E))\simeq C^\infty(E^*).
\]
Under this identification, the structure maps
\[
\varphi_{\Gamma(E)}\colon C^\infty(M)\longrightarrow C^\infty(E^*),
\qquad
\eta_{\Gamma(E)}\colon \Gamma(E)\longrightarrow C^\infty(E^*)
\]
are given by the pullback along the bundle projection $E^*\to M$ and the evaluation map, respectively. More precisely, $\eta_{\Gamma(E)}$ assigns to each section $\alpha\in\Gamma(E)$ its associated fiberwise linear function
\[
\operatorname{ev}_{\alpha}\colon E^*\longrightarrow \mathbb{R},
\qquad
\xi\longmapsto \xi\bigl(\alpha(\mathrm{pr}(\xi))\bigr),
\]
where $\mathrm{pr}\colon E^*\to M$ denotes the bundle projection.
\end{proposition}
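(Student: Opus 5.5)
The plan is to avoid building the universal arrow by hand. Instead I reduce to the trivial bundle via Serre--Swan and then use that $\mathcal F$ is a left adjoint. Since $\mathcal F$ is left adjoint to $U$, it preserves colimits. In particular it sends a presentation of $\Gamma(E)$ as a cokernel of free modules to a pushout of $C^\infty(M)$-algebras, which in $C^\infty(M)/\cring$ is a quotient by an ideal. So it suffices to (i) compute $\mathcal F$ on a finitely generated free module and (ii) identify the resulting quotient with $C^\infty(E^*)$. Naturality in $E$ and the description of $\varphi_{\Gamma(E)}$ and $\eta_{\Gamma(E)}$ will follow from tracking generators through the construction.

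\emph{Step 1 (free case).} For $\mathscr M=C^\infty(M)^N$ with basis $e_1,\dots,e_N$, a $C^\infty(M)$-linear map $\psi\colon C^\infty(M)^N\to U(\scB)$ is just a choice of $N$ elements $\psi(e_i)\in\scB$. Hence $\mathcal F(C^\infty(M)^N)$ is the free $C^\infty(M)$-algebra on $N$ generators, i.e. the $C^\infty$-coproduct $C^\infty(M)\otimes_\infty C^\infty(\R^N)$. I will invoke the standard fact (Moerdijk--Reyes, or Joyce) that this coproduct is $C^\infty(M\times\R^N)$, with generators the coordinates $\xi_i$. Under this identification $\xi_i=\ev_{e_i}$ and the structure map is $\pr^*$. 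This settles the trivial bundle.

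\emph{Step 2 (presentation).} By Serre--Swan, choose $E\oplus E'\cong M\times\R^N$ with idempotent $p\in \mathrm{End}(M\times\R^N)$ whose image is $E$. Then $\Gamma(E)=\operatorname{coker}(1-p)$ on $C^\infty(M)^N$. Since $\mathcal F$ preserves colimits,
\[
\mathcal F(\Gamma(E))\cong C^\infty(M\times\R^N)/\bigl\langle \ev_{(1-p)e_i} : i=1,\dots,N\bigr\rangle .
\]
Here I am using that a coequalizer of $C^\infty(M)$-algebras is the quotient by the ideal generated by the differences of the images of the generators. Dually, $E^*\cong \im(p^{\svee})\subset M\times\R^N$ is exactly the common zero set of the linear functions $\ev_{(1-p)e_i}$. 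Moreover $r:=p^{\svee}$ is a smooth retraction of $M\times\R^N$ onto $E^*$.

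\emph{Step 3 (the ideal, main obstacle).} The step needing care is that the ideal $I=\langle \ev_{(1-p)e_i}\rangle$ equals the full vanishing ideal of $E^*$, and that restriction $C^\infty(M\times\R^N)\to C^\infty(E^*)$ is surjective; then the quotient is $C^\infty(E^*)$. Surjectivity is immediate, because $g\circ r$ extends any $g\in C^\infty(E^*)$. For the kernel, I would apply a parametrized Hadamard lemma along the straight segment from $r(\xi)$ to $\xi$:
\[
f(x,\xi)-f(x,r(\xi))=\sum_i \bigl((1-p^{\svee})\xi\bigr)_i\, g_i(x,\xi), \qquad g_i=\int_0^1 \partial_i f\bigl(x,r(\xi)+t(1-p^{\svee})\xi\bigr)\,dt .
\]
The coefficients $((1-p^{\svee})\xi)_i$ are $C^\infty(M)$-combinations of the $\ev_{(1-p)e_j}$, so they lie in $I$. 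Hence any $f$ vanishing on $E^*$ lies in $I$.

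Finally, under the resulting identification $\ev_{e_i}$ restricts to $\ev_{pe_i}$ on $E^*$, so $\eta_{\Gamma(E)}$ is the evaluation map and $\varphi_{\Gamma(E)}=\pr^*$. Naturality in $E$, and independence from the choice of complement $E'$, follow because all isomorphisms constructed are the canonical comparison maps given by the universal property.
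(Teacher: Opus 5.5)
Your proof is correct. The paper gives no argument of its own for this proposition: it is quoted from \cite{LL} as the vector-bundle instance of Theorem~\ref{thm:Universal-Cinfty-ring}. So there is nothing in the text to compare step by step, and your proposal stands as a self-contained proof.

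Your route has three steps:
\begin{enumerate}
\item free modules go to the free $C^\infty(M)$-algebras $C^\infty(M)\otimes_\infty C^\infty(\mathbb R^N)\cong C^\infty(M\times\mathbb R^N)$;
\item the left adjoint $\mathcal F$ turns the Serre--Swan presentation $\Gamma(E)=\operatorname{coker}(1-p)$ into the quotient by the ideal generated by the $\mathrm{ev}_{(1-p)e_i}$;
\item the retraction $p^{\svee}$, combined with a fiberwise Hadamard lemma, identifies that ideal with the vanishing ideal of $E^*=\operatorname{im}(p^{\svee})$.
\end{enumerate}
This buys two things. First, it makes explicit the right-exactness mechanism behind Proposition~\ref{prop:Anchored-bundle_SF}(1), which the paper again only cites: $\mathcal F(\mathscr G)=C^\infty(A^*)/\langle\eta_A(\ker\rho)\rangle$ for $\mathscr G=\operatorname{coker}(\ker\rho\to\Gamma(A))$. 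Second, your Step~3 isolates exactly what constant rank provides: a smooth retraction onto the zero locus, which upgrades the generated ideal to the full vanishing ideal. This has no counterpart in the singular setting of Section~5, where one only has the quotient by the generated ideal $\langle\eta_A(\ker\rho)\rangle$. Incidentally, your Hadamard coefficients are exactly the generators, since $\bigl((1-p^{\svee})\xi\bigr)_i=\langle\xi,(1-p)e_i\rangle=\mathrm{ev}_{(1-p)e_i}(x,\xi)$.

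Three small tightenings:
\begin{enumerate}
\item The isomorphism $C^\infty(M)\otimes_\infty C^\infty(\mathbb R^N)\cong C^\infty(M\times\mathbb R^N)$ is the trivial-bundle case of the statement and carries real weight, so cite it precisely (\cite{MR}, \cite{Joy}).
\item A finite $N$ in Serre--Swan needs the rank of $E$ to be bounded, which is automatic when $M$ is connected.
\item Make your final sentence on naturality precise via uniqueness. Your composite isomorphism is a $C^\infty(M)$-algebra map sending $\eta_{\Gamma(E)}(\alpha)$ to $\mathrm{ev}_\alpha$. Hence it coincides with the map $\widehat{\mathrm{ev}}\colon\mathcal F(\Gamma(E))\to C^\infty(E^*)$ furnished by Theorem~\ref{thm:Universal-Cinfty-ring}, which does not involve $p$ at all. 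Naturality in bundle maps $\phi\colon E_1\to E_2$ over the identity then follows from $(\phi^{\svee})^*\mathrm{ev}_\alpha=\mathrm{ev}_{\phi(\alpha)}$ and the uniqueness clause of the universal property.
\end{enumerate}
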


\begin{theorem}\label{thm:1-1}
Let $\scA$ be a $C^\infty$-ring. Let $((\scM, [\cdot\,,\cdot]_{\scM}), \rho:\scM \to \CDer(\scA))$ be a Lie-Rinehart algebra. Then, the $\cin$-ring $\mathcal F(\scM)$ carries a natural 
Poisson  $C^\infty$-ring structure $\left(\mathcal F(\scM),  \Pb_{\mathcal F(\scM)}\right)$.

Furthermore, the linear Poisson structure on the $\cin$-ring $\mathcal F(\scM)$ is uniquely determined by the following equations:

\begin{align}\label{eq:Rinehart-Poisson1}
    \left\{\eta_{\scM}(m_1), \eta_{\scM}(m_2)\right\}_{\cF(\scM)}
&=
\eta_{\scM}\left([m_1, m_2]_{\scM}\right)\\
\nonumber&\\\label{eq:Rinehart-Poisson2}\{\eta_\scM(m), \varphi_\scM(a)\}_{\mathcal{F}(\scM)}&=\varphi_\scM(\rho(m)a)\\\nonumber&\\
\label{eq:Rinehart-Poisson3}\{\varphi_\scM(a), \varphi_\scM(b)\}_{\mathcal{F}(\scM)}&=0.
\end{align}
\end{theorem}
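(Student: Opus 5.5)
The plan is to prove uniqueness first, by showing that $\mathcal F(\scM)$ is generated as a $C^\infty$-ring by $\varphi_\scM(\scA)\cup\eta_\scM(\scM)$, and then to build the bracket from the universal property of Theorem~\ref{thm:Universal-Cinfty-ring} applied to square-zero extensions.

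\textbf{Generation and uniqueness.} Let $\scB\subseteq \mathcal F(\scM)$ be the sub-$C^\infty$-ring generated by $\varphi_\scM(\scA)\cup\eta_\scM(\scM)$. It is an $\scA$-algebra, and $\eta_\scM$ factors through it. By the universal property, the identity of $\mathcal F(\scM)$ and the composite $\mathcal F(\scM)\to\scB\hookrightarrow\mathcal F(\scM)$ are both extensions of $\eta_\scM$. They therefore coincide, so $\scB=\mathcal F(\scM)$.

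If two $C^\infty$-derivations agree on a generating set, the set on which they agree is closed under all operations $f_{\mathcal F(\scM)}$, by the chain rule in the definition of a $C^\infty$-derivation. Hence they agree everywhere. A Poisson bracket is a $C^\infty$-derivation in each argument. So \eqref{eq:Rinehart-Poisson1}--\eqref{eq:Rinehart-Poisson3} determine $\{g,\cdot\}$ for every generator $g$. By skew-symmetry they then determine $\{\cdot,h\}$ on generators for every $h$, and hence on all of $\mathcal F(\scM)$. This gives uniqueness.

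\textbf{Existence.} For an $\mathcal F:=\mathcal F(\scM)$-module $N$, the square-zero extension $\mathcal F\oplus N$ is a $C^\infty$-ring, with $f_{\mathcal F\oplus N}(x_i+n_i)=f(x)+\sum_i(\partial_if)(x)\,n_i$. Maps $\mathcal F\to \mathcal F\oplus N$ of the form $x\mapsto x+D(x)$ are the same thing as $C^\infty$-derivations $D\colon\mathcal F\to N$.

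\begin{enumerate}
\item For each $m\in\scM$, take $N=\mathcal F$. Since $\rho(m)$ is a $C^\infty$-derivation, the map $a\mapsto \varphi(a)+\varphi(\rho(m)a)\varepsilon$ makes $\mathcal F[\varepsilon]$ an $\scA$-algebra. The map $n\mapsto \eta(n)+\eta([m,n]_\scM)\varepsilon$ is $\scA$-linear precisely by the Leibniz rule. Its extension has the form $\mathrm{id}+X_m\varepsilon$, and this defines $X_m\in\CDer(\mathcal F)$ with $X_m\eta(n)=\eta([m,n])$ and $X_m\varphi(a)=\varphi(\rho(m)a)$.
\item Similarly, for $a\in\scA$, with $\scA$ acting on $\varepsilon$ only through $\varphi$, the map $n\mapsto \eta(n)-\varphi(\rho(n)a)\varepsilon$ is $\scA$-linear. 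It yields $Y_a\in\CDer(\mathcal F)$ with $Y_a\eta(n)=-\varphi(\rho(n)a)$ and $Y_a\varphi(b)=0$.
\item Next take $N=\CDer(\mathcal F)$ with $\scA$-algebra structure $a\mapsto \varphi(a)+Y_a$. This is a $C^\infty$-ring map because $a\mapsto Y_a$ is a $C^\infty$-derivation, which one checks on generators as above. The map $m\mapsto \eta(m)+X_m$ must then be $\scA$-linear, i.e.\ $X_{am}=\varphi(a)X_m+\eta(m)Y_a$. Evaluating both sides on $\eta(n)$ and $\varphi(b)$ reduces this to the Leibniz rule and to $\rho(am)=a\rho(m)$.
\item The universal extension then gives $f\mapsto f+H_f$, with $H\colon\mathcal F\to\CDer(\mathcal F)$ a $C^\infty$-derivation. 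Set $\{f,g\}:=H_f(g)$.
\end{enumerate}

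\textbf{Axioms.} By construction, $\{f,\cdot\}$ is a $C^\infty$-derivation and $\{\cdot,g\}$ is one too. Skew-symmetry, $\{f,g\}+\{g,f\}=0$, is a biderivation expression. It vanishes on generators, using skew-symmetry of $[\cdot,\cdot]_\scM$ together with the formulas for $X_m$ and $Y_a$, so it vanishes identically by the uniqueness argument. Once the bracket is skew, the Jacobiator is a $C^\infty$-derivation in each slot. On generators it reduces to three facts: the Jacobi identity for $\scM$, the property that $\rho$ is a Lie algebra map (for two $\eta$'s and one $\varphi$), and vanishing for the remaining cases.

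The main obstacle is step 3, namely verifying the $\scA$-linearity identity $X_{am}=\varphi(a)X_m+\eta(m)Y_a$ and that $a\mapsto\varphi(a)+Y_a$ is a $C^\infty$-ring map. This is where the Lie-Rinehart axioms and the $C^\infty$-nature of $\rho(m)$ are genuinely used.
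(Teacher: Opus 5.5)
Your proof is correct. Note that the paper itself gives no proof of this theorem; it is quoted from \cite{LL}. The closest in-paper trace of a construction is Remark~\ref{rmk:Anchored-bundle_SF}, which is presentation-based. In the geometric case one chooses an almost Lie algebroid bracket on $A$, forms the linear biderivation $\{\cdot,\cdot\}_{A^*}$ on $C^\infty(A^*)$, and descends it to $C^\infty(A^*)/\langle\eta_A(\ker\rho)\rangle$, because the Jacobiator is valued in that ideal.

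Your route is presentation-free. You obtain generation of $\mathcal F(\scM)$ by $\varphi_\scM(\scA)\cup\eta_\scM(\scM)$ directly from the universal property of Theorem~\ref{thm:Universal-Cinfty-ring}. You then build the bracket by mapping $\mathcal F(\scM)$ into square-zero extensions: first $\mathcal F[\varepsilon]$, to produce $X_m$ and $Y_a$, and then $\mathcal F\oplus\CDer(\mathcal F)$, to assemble them into $H$.

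This buys you full generality: any $C^\infty$-ring $\scA$, any $\scA$-module $\scM$, no choice of lifted bracket, and no need to know in advance that a lifted Jacobiator is ideal-valued. Uniqueness, skew-symmetry and Jacobi all follow from the single principle that two $C^\infty$-derivations agreeing on generators agree everywhere. The presentation route gives instead the explicit model $C^\infty(A^*)/\langle\eta_A(\ker\rho)\rangle$ with concrete almost Hamiltonian vector fields $\mathcal H^a,\mathcal H^f$ on $A^*$. That is what the proof of Theorem~\ref{thm:main1} actually uses.

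Two steps you state without justification each deserve a line.
\begin{enumerate}
\item \emph{Why the universal extension has the form $\mathrm{id}+D$.} In each of your three cases, the projection $\mathcal F\oplus N\to\mathcal F$ is a map of $\scA$-algebras, since it composes with your structure map to give $\varphi_\scM$. Composing the extension with this projection yields an $\scA$-algebra endomorphism of $\mathcal F$ that restricts to $\eta_\scM$ on $\scM$. By uniqueness in the universal property it is the identity, so the extension is $\mathrm{id}+D$.
\item \emph{Why the Jacobiator is a $C^\infty$-derivation in each slot.} Using skew-symmetry, write $J(f,g,\cdot)=\bigl[\{f,\cdot\},\{g,\cdot\}\bigr]-\{\{f,g\},\cdot\}$. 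This is a $C^\infty$-derivation because the commutator of two $C^\infty$-derivations is again one: the second-order terms $(\partial_i\partial_j f)_{\mathcal F}$ cancel by symmetry of mixed partials. Derivation in the other slots then follows from the total skew-symmetry of $J$.
\end{enumerate}
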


\section{The $C^\infty$-Poisson scheme of a singular subalgebroid}\label{sec:5}

Singular subalgebroids are, in particular, Lie-Rinehart algebras over the underlying $C^\infty$-ring structure of $C^\infty(M)$. By Theorem~\ref{thm:1-1}, one can associate to such an object a Poisson  $C^\infty$-ring. This motivates the following definition.

\begin{definition}
The \emph{Poisson $C^\infty$-ring of singular subalgebroid} $\scG \subset \Gamma(E)$ is the \underline{unique} Poisson $C^\infty$-ring $\mathcal{F}(\scG)$ associated with $\scG$.
\end{definition}

\begin{remark}
The results of this paper remain valid, more generally, for arbitrary coherent sheaves of Lie-Rinehart algebras over a smooth
manifold $M$, without the need to introduce an ambient vector bundle $E$. Indeed, the
ambient bundle plays essentially no role in the arguments, except in the
definition of $E$-valued almost Lie algebroids, which can be readily
reformulated at the level of sections.
Nevertheless, since the main class of examples we have in mind consists of
Lie-Rinehart algebras arising from ambient vector bundles, including
singular foliations and, more generally, singular subalgebroids, we work
throughout in the latter framework.
\end{remark}

\begin{remark}
    In contrast to regular foliations, which may be viewed as involutive vector subbundles of the tangent bundle $TM$ (equivalently, Lie algebroids whose anchor is injective), the underlying $C^\infty(M)$-module $\scG\subset \mathfrak X(M)$ of a singular foliation is generally not projective; in particular, it does not arise as the module of sections of a vector bundle.
\end{remark}

If the singular subalgebroid is globally finitely generated, then the associated Poisson structure admits a particularly elegant geometric description.

\begin{proposition}\label{prop:Anchored-bundle_SF}
    Let $\scG$ be a singular subalgebroid on a smooth manifold $M$ so that there is a vector bundle morphism $\rho\colon A\to E$ with $\rho(\Gamma(A))=\scG$. Then, \begin{enumerate}
        \item  $\mathcal{F(\scG)}=C^\infty(A^*)/\langle \eta_A(\ker\rho )\rangle$;
        \item the surjective map $ \phi: C^\infty(A^*) \to \cF(\scG)$ has a geometric interpretation: \[
\Spec(\phi): \Spec( \cF(\scG)) \hookrightarrow \Spec(C^\infty(A^*) ) = (A^*, \cin_{A^*})
\]
is a closed embedding.
    \end{enumerate}
    Here, $\eta_A\colon \Gamma(A)\to C^\infty(A^*), s\mapsto \left(\alpha\mapsto \langle \alpha, s(\mathrm{pr}(\alpha))\rangle\right)$
 and $\mathrm{pr}\colon A^*\to M$ is the projection map.\end{proposition}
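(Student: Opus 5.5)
Two facts are needed: (1) $\mathcal F(\scG)\cong C^\infty(A^*)/\langle \eta_A(\ker\rho)\rangle$, and (2) $\Spec$ of this quotient map is a closed embedding.

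\emph{Item (1).} Since $\rho(\Gamma(A))=\scG$, there is an exact sequence of $C^\infty(M)$-modules
\[
0\to \ker\rho\to \Gamma(A)\xrightarrow{\rho}\scG\to 0 .
\]
The functor $\mathcal F$ is a left adjoint (Theorem~\ref{thm:Universal-Cinfty-ring}), so it preserves colimits. Note that $\scG$ is the coequalizer of the inclusion $\ker\rho\hookrightarrow\Gamma(A)$ and the zero map. In the undercategory $C^\infty(M)/\cring$, the module $\ker\rho$ and the zero module have images $\mathcal F(\ker\rho)$ and $\mathcal F(0)=C^\infty(M)$, respectively. Hence $\mathcal F(\scG)$ is the coequalizer of the two induced maps $\mathcal F(\ker\rho)\rightrightarrows \mathcal F(\Gamma(A))=C^\infty(A^*)$. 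The first sends the generator $\eta(k)$ to $\eta_A(k)$; the second sends it to $0$.

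In $\cring$, a coequalizer of maps into $C^\infty(A^*)$ is the quotient by the ideal generated by the differences of the two maps, because quotients by ideals are $C^\infty$-rings (Hadamard). Since $\mathcal F(\ker\rho)$ is generated as a $C^\infty(M)$-algebra by $\eta(\ker\rho)$, that ideal is $\langle\eta_A(\ker\rho)\rangle$. Alternatively, one can check the universal property directly, which avoids handling $\mathcal F(\ker\rho)$. An $A$-linear map $\psi\colon\scG\to U(\scB)$ gives $\psi\circ\rho\colon\Gamma(A)\to U(\scB)$. This extends uniquely to $\widehat{\psi\rho}\colon C^\infty(A^*)\to\scB$, which kills $\eta_A(\ker\rho)$ and therefore factors through the quotient. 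Uniqueness of the factorization follows from surjectivity of $\rho$ and of $C^\infty(A^*)\to C^\infty(A^*)/I$. By uniqueness of universal objects, this identifies $\mathcal F(\scG)$ together with its structure maps $\varphi$ and $\eta$.

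\emph{Compatibility with the Poisson structure.} We also check that $I=\langle \eta_A(\ker\rho)\rangle$ is a Poisson ideal for the almost-Lie bracket on $C^\infty(A^*)$, even though that bracket may fail Jacobi. By the Leibniz rule it suffices to test $\{\eta_A(k),\eta_A(a)\}=\eta_A([k,a]_A)$ and $\{\eta_A(k),\mathrm{pr}^*f\}=\mathrm{pr}^*(r\rho(k)f)=0$. The bracket $[k,a]_A$ lies in $\ker\rho$ because $\rho([k,a]_A)=[\rho k,\rho a]_E=0$. Thus the induced bracket on $\mathcal F(\scG)$ agrees with the one from Theorem~\ref{thm:1-1}, by the defining equations there.

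\emph{Item (2).} I would use the general fact that for any ideal $I$ of a $C^\infty$-ring of the form $C^\infty(N)$, $\Spec(C^\infty(N)/I)\to\Spec(C^\infty(N))=(N,C^\infty_N)$ is a closed embedding. Its image is the zero set
\[
Z(I)=\{x\in N : f(x)=0\ \text{for all } f\in I\},
\]
and its structure sheaf is the sheafification of $C^\infty_N/I\cdot C^\infty_N$.

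The steps are as follows. First, $\R$-points of $C^\infty(A^*)/I$ are $\R$-points of $C^\infty(A^*)$ vanishing on $I$; by Pursell/Milnor these are the points of $Z(I)$, and the $\R$-points carry the subspace topology. Second, the stalks are $C^\infty_{A^*,x}/I_x$, so the sheaf map $C^\infty_{A^*}\to\iota_*\scO$ is surjective on stalks. Together these give a closed embedding of local $C^\infty$-ringed spaces. Here $Z(I)=\{\alpha\in A^* : \langle\alpha,k\rangle=0\ \text{for all } k\in\ker\rho\}$.

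The main obstacle is this stalkwise/topology identification of $\Spec$ of a quotient, namely that Dubuc's $\Spec$ commutes with quotients in this sense. I would cite Joyce's results on $C^\infty$-schemes for it (finitely generated $C^\infty$-rings, or the general statement that $C^\infty(N)\to C^\infty(N)/I$ induces a closed embedding), defer the details to the appendix, and verify that no finite generation of $I$ is needed.
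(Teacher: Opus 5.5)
Your proposal is correct. The paper gives no argument of its own here beyond citing \cite[Example 6.11]{LL}. The only supporting material in the text is Appendix~A: Lemma~\ref{lem:A4} says a surjection of $C^\infty$-rings induces a closed embedding of the spaces of $\mathbb R$-points, and Lemma~\ref{lem:A2} uses Milnor's exercise to identify the image with $Z_J$. These two lemmas are what the paper later invokes as ``closed embedding'' in the proof of Theorem~\ref{thm:HNcone}.

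\textbf{Item (2).} Your point-set argument is exactly Lemma~\ref{lem:A4} together with Lemma~\ref{lem:A2}. You then go further by computing the stalks as $C^\infty_{A^*,x}/I_x$, using that $C^\infty$-localization commutes with quotients and that localizing at a point gives germs. This yields a closed embedding of local $C^\infty$-ringed spaces, whereas the paper's lemmas only establish it on underlying spaces. You are right that neither step needs $I$ to be finitely generated.

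\textbf{Item (1).} Your direct universal-property check is the cleanest self-contained route. It uses only $\mathcal F(\Gamma(A))=C^\infty(A^*)$, surjectivity of $\rho$ onto $\scG$, and the fact that $\rho(a)\mapsto[\eta_A(a)]$ is well defined. It never needs a description of $\mathcal F(\ker\rho)$.

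If you keep the coequalizer version instead, two points need care:
\begin{itemize}
\item ``Generated as a $C^\infty(M)$-algebra'' must be meant in the $C^\infty$ sense.
\item Identifying the ideal of differences with $\langle\eta_A(\ker\rho)\rangle$ then needs Hadamard: $h(x,y)-h(x,0)=\sum_j y_j\,g_j(x,y)$.
\end{itemize}
Alternatively, note two facts: the ideal of differences contains every $\eta_A(k)$, and the quotient by $\langle\eta_A(\ker\rho)\rangle$ already coequalizes, by uniqueness in the universal property of $\mathcal F(\ker\rho)$. Together these give equality of the two ideals.

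\textbf{Poisson-ideal paragraph.} This is not needed for the statement. It also presupposes an almost Lie bracket on $A$, which the hypotheses do not provide, though one exists by \cite{LLS}. If you keep it, the reduction to the test functions $\eta_A(a)$ and $\mathrm{pr}^*f$ must use that vector fields are $C^\infty$-derivations. It also needs that $C^\infty(A^*)$ is $C^\infty$-generated by these functions, for instance via a closed embedding $A^*\hookrightarrow\mathbb R^K$ built from finitely many generators of $\Gamma(A)$ and an embedding of $M$. The ordinary $\mathbb R$-algebra Leibniz rule alone does not justify that reduction.
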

\begin{proof}See \cite[Example 6.11]{LL}.
\end{proof}

\begin{remark}\label{Rmk:Tepui}
The fibers $\{X_{\mathcal{F}(\mathscr{G})}\cap A_m\}_{m\in M}$ of the subset $X_{\mathcal{F}(\mathscr{G})}\subset A^*$ are vector spaces of potentially varying dimensions. We believe that this family should correspond to the dual bundle of the Tepui algebroid associated with $\mathscr{G}$, in the sense of \cite{GML}.
\end{remark}

\begin{remark}\label{rmk:Anchored-bundle_SF} Under the assumptions of Proposition \ref{prop:Anchored-bundle_SF}, 
    the associated Poisson structure on $\mathcal{F(\scG)}=C^\infty(A^*)/\langle \eta_A(\ker\rho )\rangle$ can be obtained as follows: Choose an $E$-valued almost Lie algebroid structure on $A$, i.e., a skew-symmetric bilinear (over $\mathbb R $) map $$[\cdot\,,\cdot]_A : \Gamma(A) \wedge \Gamma(A)  \longrightarrow \Gamma(A) $$
that satisfies the \emph{Leibniz identity},
\begin{equation}\label{eq:Leibniz}
   [s_1, f s_2]_A = r\circ\rho(s_1)[f]s_2 +f[s_1, s_2]_A, \quad\text{for all $s_1, s_2 \in \Gamma(A), f \in C^\infty(M )$}
\end{equation}
 and the \emph{anchor condition}:

\begin{equation}\label{eq:bracket-morphism}
   \rho([s_1, s_2]_A) = [\rho(s_1), \rho(s_2)], \quad\text{for all $s_1, s_2 \in \Gamma(A)$}.
\end{equation}
    
To every section $a\in \Gamma(A)$ is associated a fiberwise linear function on the total space of the dual vector bundle $\mathrm{pr}\colon A^*\to M$ given by the evaluation map \(\mathrm{ev}_a\equiv \eta_A(a)\colon A^*\to \mathbb R, \alpha\mapsto \langle \alpha, a\circ \mathrm{pr}(\alpha)\rangle\). There is a fiberwise linear bracket \(\{\cdot\,,\cdot\}_{A^*}\) on $A^*$ which is uniquely determined by the relation \begin{equation}\label{eq:Linear_Poisson}
    \{\mathrm{ev}_a, \mathrm{ev}_b\}_{A^*}=\mathrm{ev}_{[a,b]_A}, \; a, b\in \Gamma(A).
\end{equation} The relation \eqref{eq:Linear_Poisson} imposes that \[\{\mathrm{ev}_a, \mathrm{pr}^*f\}_{A^*}=\mathrm{pr}^*(r\circ\rho(a)[f])\;\text{and}\;
\;\{\mathrm{pr}^*f, \mathrm{pr}^*g\}_{A^*}=0,\]for all $a\in \Gamma(A)$ and $f, g\in C^\infty(M)$. See \cite[\S 13.5]{CFM}.

Since the Jacobiator takes values in $\eta_A(\ker (\rho\colon \Gamma(A)\to \Gamma(E))$, it follows that the biderivation  \(\{\cdot\,,\cdot\}_{A^*}\) induces a Poisson $C^\infty$-ring bracket on the quotient $\mathcal{F(\scG)}=C^\infty(A^*)/\langle \eta_A(\ker\rho )\rangle$ satisfying the desired relations.

For $G\in C^\infty(A^*)$, we define the vector field $\mathcal{H}^G:=\{G, \cdot\}_{A^*}\in \mathfrak X(A^*)$, it shall be called the \emph{almost hamiltonian vector field associated with $G$}.
\end{remark}

For every $a\in \Gamma(A)$, and $f\in C^\infty(M)$, consider the vector fields \begin{itemize}
    \item $\mathcal{H}^a:=\{\mathrm{ev}_a, \cdot\}_{A^*}\in \mathfrak X(A^*)$ associated with the evaluation function $\mathrm{ev}_a\colon A^*\to \mathbb R$, and
    \item $\mathcal{H}^{f}:=\{\mathrm{pr}^*f, \cdot\}_{A^*}\in \mathfrak X(A^*)$ associated with the pullback of $f$ along $\mathrm{pr}\colon A^*\to M$.
\end{itemize}

\begin{lemma}\label{lem:leaf-preserving}
Let $(A_1, [\cdot\,,\cdot]_{A_1}, \rho_{A_1})$ and
$(A_2, [\cdot\,,\cdot]_{A_2}, \rho_{A_2})$ be almost algebroids over a
singular subalgebroid $\mathscr{G}\subseteq \Gamma(E)$. Let $\Phi$ and $\Psi$ \begin{equation}
    \xymatrix{A_1\ar@<2pt>[r]^\Phi&A_2\ar@<2pt>[l]^\Psi}
\end{equation}
be as in Proposition~\ref{prop:equivalence}. Then,
\begin{enumerate}
\item \label{(5.6.i)} for $f\in C^\infty(M)$, $\{\mathrm{pr}_2^*f, \cdot\}_{A^*_2}$ is $\Phi^\svee$-related to $\{\mathrm{pr}^*_1f, \cdot\}_{A_2^*}$;
    \item \label{(5.6.ii)} for $a\in \Gamma(A_2)$ and $\xi\in X_{\mathcal{F}(\mathcal G)}\subset A^*_2$, \[T_\xi\Phi^\svee ( \mathcal{H}^{\Phi(a)}(\xi))=\mathcal{H}^a\circ\Phi^\svee(\xi);\] 
    \item \label{item:leaf-preserving} for all $b\in \Gamma(A_2)$, there exists $a\in \Gamma(A_1)$ such that $\mathcal H^b(\xi)=\mathcal{H}^{\Phi(a)}(\xi)$ for $\xi\in X_{\mathcal{F}(\mathscr{G})}\subset A^*_2$. In addition, for $\xi \in X_{\mathcal F(\mathcal G)} \subset A^*_2$, and whenever the flow at $t \in \R$ is defined with\footnote{In the proof of Theorem \ref{thm:main1} (Claim 1), we show that this is always the case.} $\Phi^{\mathcal H^b}_{t}(\xi) \in X_{\mathcal F(\mathcal G)}$, we have:\[\Phi^\svee\circ \Phi^{\mathcal{H}^b}_{t}(\xi)=\Phi^{\mathcal{H}^a}_{t}(\xi).\] 
\end{enumerate}
Analogous relations hold for $\Psi^\svee$.
\end{lemma}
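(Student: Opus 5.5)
The plan is to reduce each item to an identity checked on generating functions. A tangent vector at a point of $A_i^*$ is determined by its values on the functions $\mathrm{ev}_b$ ($b\in\Gamma(A_i)$) and $\mathrm{pr}_i^*f$ ($f\in C^\infty(M)$). Indeed, near any point these contain a full coordinate system: take base coordinates $\mathrm{pr}_i^*x^j$ and fiber coordinates $\mathrm{ev}_{b_k}$ for a local frame $(b_k)$, extended globally with bump functions. Hence a vector field $X$ on $A_2^*$ and a vector field $Y$ on $A_1^*$ satisfy $T_\xi\Phi^\svee(X(\xi))=Y(\Phi^\svee(\xi))$ as soon as
\[
X\big((\Phi^\svee)^*G\big)(\xi)=\big((\Phi^\svee)^*(YG)\big)(\xi)
\]
for $G$ ranging over $\mathrm{ev}_b$ and $\mathrm{pr}_1^*f$. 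Corollary~\ref{cor:equivalence} gives $(\Phi^\svee)^*\mathrm{ev}_b=\mathrm{ev}_{\Phi(b)}$ and $(\Phi^\svee)^*\mathrm{pr}_1^*f=\mathrm{pr}_2^*f$. We also use that, by Proposition~\ref{prop:Anchored-bundle_SF}, the set $X_{\mathcal F(\mathscr G)}\subset A_2^*$ consists of the $\mathbb R$-points of $C^\infty(A_2^*)/\langle\mathrm{ev}_{\ker\rho_{A_2}}\rangle$. That is, it is the common zero locus of the functions $\mathrm{ev}_c$ with $c\in\ker\rho_{A_2}$, and every element of the ideal vanishes there. (We read item (1) with $\{\mathrm{pr}_1^*f,\cdot\}_{A_1^*}$, and item (2) with $a\in\Gamma(A_1)$ and right-hand side $\mathcal H^a(\Phi^\svee(\xi))$.)

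For item (1), test on $\mathrm{ev}_b$ using Remark~\ref{rmk:Anchored-bundle_SF}:
\[
\{\mathrm{pr}_2^*f,\mathrm{ev}_{\Phi(b)}\}_{A_2^*}=-\mathrm{pr}_2^*\big(r\rho_{A_2}(\Phi b)[f]\big)=-\mathrm{pr}_2^*\big(r\rho_{A_1}(b)[f]\big)=(\Phi^\svee)^*\{\mathrm{pr}_1^*f,\mathrm{ev}_b\}_{A_1^*},
\]
where the middle equality is $\rho_{A_2}\circ\Phi=\rho_{A_1}$. On $\mathrm{pr}^*g$ both sides vanish. So the two fields are $\Phi^\svee$-related everywhere.

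For item (2), on $\mathrm{pr}^*f$ the same anchor identity gives exact equality. On $\mathrm{ev}_b$ we get
\[
\mathcal H^{\Phi(a)}(\mathrm{ev}_{\Phi(b)})=\mathrm{ev}_{[\Phi a,\Phi b]_{A_2}}
\quad\text{versus}\quad
(\Phi^\svee)^*\mathrm{ev}_{[a,b]_{A_1}}=\mathrm{ev}_{\Phi[a,b]_{A_1}}.
\]
By Proposition~\ref{prop:equivalence} the difference is $\mathrm{ev}_c$ with $c\in\ker\rho_{A_2}$, which vanishes on $X_{\mathcal F(\mathscr G)}$. This is why the identity holds only pointwise on $X_{\mathcal F(\mathscr G)}$.

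For item (3), put $a:=\Psi(b)$. Then $\rho_{A_2}(\Phi(a)-b)=\rho_{A_1}(\Psi b)-\rho_{A_2}(b)=0$, so $\Phi(a)-b=:c\in\ker\rho_{A_2}$, and it suffices to show $\mathcal H^c(\xi)=0$ for $\xi\in X_{\mathcal F(\mathscr G)}$. On $\mathrm{pr}^*f$ we have $\mathcal H^c(\mathrm{pr}^*f)=\mathrm{pr}^*(r\rho_{A_2}(c)[f])=0$. On $\mathrm{ev}_d$ we have $\mathcal H^c(\mathrm{ev}_d)=\mathrm{ev}_{[c,d]}$. By the anchor condition $\rho_{A_2}[c,d]=[\rho_{A_2}c,\rho_{A_2}d]_E=0$, so $[c,d]\in\ker\rho_{A_2}$ and this function vanishes on $X_{\mathcal F(\mathscr G)}$; hence $\mathcal H^c(\xi)=0$. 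For the flow statement, let $\gamma(s)=\Phi^{\mathcal H^b}_s(\xi)$, assuming (as in the footnote) that $\gamma(s)\in X_{\mathcal F(\mathscr G)}$ for all $s$ between $0$ and $t$. Then
\[
\tfrac{d}{ds}\,\Phi^\svee(\gamma(s))=T\Phi^\svee\big(\mathcal H^b(\gamma(s))\big)=T\Phi^\svee\big(\mathcal H^{\Phi(a)}(\gamma(s))\big)=\mathcal H^a\big(\Phi^\svee(\gamma(s))\big),
\]
using the first part of (3) and then item (2). Thus $\Phi^\svee\circ\gamma$ is an integral curve of $\mathcal H^a$ starting at $\Phi^\svee(\xi)$, and uniqueness of integral curves gives $\Phi^\svee\circ\Phi^{\mathcal H^b}_t(\xi)=\Phi^{\mathcal H^a}_t(\Phi^\svee(\xi))$. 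The statements for $\Psi^\svee$ follow by exchanging the roles of $A_1$ and $A_2$.

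The main obstacle is the bookkeeping behind the pointwise arguments. We must justify that testing on $\mathrm{ev}_b$ and $\mathrm{pr}^*f$ determines tangent vectors; this follows from the local coordinate argument, since a derivation evaluated at a point is determined by its values on local coordinates. We must also justify identifying $X_{\mathcal F(\mathscr G)}$ with the zero set of $\mathrm{ev}_{\ker\rho_{A_2}}$; this follows because an $\mathbb R$-point of the quotient is an $\mathbb R$-point of $C^\infty(A_2^*)$ killing the ideal, i.e.\ evaluation at a point by Remark~\ref{rmrk:Pursell}. Finally, the hypothesis that the flow stays in $X_{\mathcal F(\mathscr G)}$ on the whole interval is genuinely needed, since item (2) holds only on that set.
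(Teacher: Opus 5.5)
Your proposal is correct and follows the paper's proof. Items (1)--(2) rest on the same two relations from Corollary~\ref{cor:equivalence}: compatibility of the anchors under $\Phi$, and the bracket defect $[\Phi(a),\Phi(b)]_{A_2}-\Phi([a,b]_{A_1})$ lying in $\ker\rho_{A_2}$. Item (3) uses the same choice $a=\Psi(b)$ and the same integral-curve uniqueness argument. Beyond the paper, you make explicit three things it leaves implicit: that tangent vectors are detected by the functions $\mathrm{ev}_b$ and $\mathrm{pr}^*f$; that $\mathcal H^{c}$ vanishes on $X_{\mathcal F(\mathscr G)}$ for $c\in\ker\rho_{A_2}$; and the corrected indices in the statement, namely $A_1^*$ in (1), $a\in\Gamma(A_1)$ in (2), and $\Phi^{\mathcal H^a}_t(\Phi^\svee(\xi))$ in (3).
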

\begin{proof}
Item (\ref{(5.6.i)}) and (\ref{(5.6.ii)}) follow from the relations \begin{align}
   &(\Phi^\svee)^*(\mathrm{pr}_{1}^*(\rho_{A_1}(a)f))
    = \mathrm{pr}^*_{2}(\rho_{A_2}(\Phi(a))f), 
    \quad \text{for}\; a\in \Gamma(A_1),\, f\in C^\infty(M);\\&(\Phi^\svee)^*(\mathrm{ev}_{[a,b]_{A_1}})
    - \mathrm{ev}_{[\Phi(a),\Phi(b)]_{A_2}}
    \in \mathrm{ev}_{\ker\rho_{A_2}}. 
\end{align}
Let $b\in \Gamma(A_2)$. Since $\Phi\circ\Psi-\mathrm{Id}$ is valued in $\ker\rho_{A_2}$, we have $b-\Phi(\Psi(b))\in \ker\rho_{A_2}$.  This implies that $\mathrm{ev}_b-\mathrm{ev}_{\Phi(a)}\in \mathrm{ev}_{\ker\rho_{A_2}}$ with $a:=\Psi(b)\in \Gamma(A_1)$. Hence \begin{align*}
    \underbrace{\{\mathrm{ev}_{b-\Phi(a)}, \cdot\}}_{\in \mathrm{ev}_{\ker\rho_{A_2}}}=\mathcal{H}^b-\mathcal{H}^{\Phi(a)}.
\end{align*}
For $\xi\in X_{\mathcal{F}(\mathscr{G})}$, consider the integral curve $\gamma(t)=\Phi_t^{\mathcal H^b}(\xi)\in X_{\mathcal{F}(\mathscr{G})}$. We have\begin{align*}
\frac{d}{dt}\Phi^{\svee}\circ\gamma(t)&=T\Phi^\svee(\mathcal{H}^b(\gamma(t)))= T\Phi^\svee(\mathcal{H}^{\Phi(a)}(\gamma(t)))\\&\stackrel{(\ref{(5.6.ii)})}{=}\mathcal{H}^a(\Phi^\svee\circ\gamma(t)).
\end{align*}
Hence $\Phi^{\mathcal{H}^a}_t(\xi)=\Phi^\svee\circ \gamma(t)=\Phi^\svee\circ\Phi^{\mathcal{H}^b}_t(\xi)$.
\end{proof}

The following theorem is the main result of the section.

\begin{theorem}\label{thm:main1}
Let $(E\to M, \lb_E, r)$ be a Lie algebroid. 
Let $\scG\subset \Gamma(E)$ be a singular subalgebroid (Definition \ref{def:Sing-subal}) so that there is an $E$-valued almost Lie algebroid $\rho\colon A\to E$ with $\rho(\Gamma(A))=\scG$. If $\mathscr{G}$ is a coherent sheaf, then 
\begin{enumerate} 
    \item \label{(4.12.i)} the underlying space $X_{\mathcal{F}(\mathscr{G})}$ of the corresponding Poisson $C^\infty$-scheme  \[\Spec( \cF(\scG)) \hookrightarrow \Spec(C^\infty(A^*) )=(A^*, C^\infty_{A^*})\]
\begin{enumerate}
    \item is a disjoint union of symplectic smooth manifolds (referred to as \emph{symplectic leaves of\; $X_{\cF(\scG)}$}). 
    
    \item The symplectic leaves of $X_{\mathcal{F}(\mathcal G)}$ are independent of the choice of the almost Lie algebroid $A$. More precisely, for any other almost Lie algebroid $A'$, there is a canonical leaf-preserving Poisson $C^\infty$-scheme isomorphism
\[
X^{A}_{\mathcal{F}(\mathscr{G})}
\stackrel{\sim}{\longrightarrow}
X^{A'}_{\mathcal{F}(\mathscr{G})}.
\]
Here, $X^{A}_{\mathcal{F}(\mathscr{G})}$ and
$X^{A'}_{\mathcal{F}(\mathscr{G})}$ denote the corresponding realizations of
$X_{\mathcal{F}(\mathscr{G})}$ in $A^*$ and $A'^*$, respectively.
\end{enumerate}

\item \label{(4.12.ii)} For each leaf $L\subset M$ of the singular foliation $r(\mathscr{G})\subset \mathfrak X(M)$, the cotangent bundle $T^*L\hookrightarrow A^*|_L$ is a symplectic leaf of $\cF(\scG)$. 

\item \label{(4.12.iii)} for $M_{\mathrm{reg}}^{\mathscr{G}}\subset M$ the set of regular points of $\mathscr{G}$,\[X_{\mathcal{F}(\mathscr{G})}\cap A^*|_{{M}_{\mathrm{reg}}^{\mathscr{G}}}=\bigcup_{m\in M_{\mathrm{reg}}^{\mathscr{G}}}\mathrm{Im}(\rho^*_m)\supseteq \bigcup_{\substack{ m\in M_{\mathrm{reg}}^{\mathscr{G}}\cap L\\
L\in M/\mathscr{G}}}T^*_mL\] with equality when $E=TM$ and $r=\mathrm{id}$.
\end{enumerate}
\end{theorem}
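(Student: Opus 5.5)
The plan is to work entirely inside $A^*$, using the realization of Proposition~\ref{prop:Anchored-bundle_SF}. Write $I:=\langle \mathrm{ev}_{\ker\rho}\rangle\subset C^\infty(A^*)$. First I identify the underlying set. Since $\mathscr G$ is coherent, $\ker\rho$ is locally generated by finitely many sections $k_1,\dots,k_s$, so $I$ is locally finitely generated. The $\mathbb R$-points of $C^\infty(A^*)/I$ are then exactly the common zeros of $I$, namely
\[
X_{\mathcal F(\mathscr G)}=\{\xi\in A^*_m:\ \langle\xi,k(m)\rangle=0 \text{ for all local } k\in\ker\rho\}=\bigcup_m(\mathrm{Sker}\,\rho_m)^{\circ}.
\]
This already gives item~(\ref{(4.12.iii)}). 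On $M^{\mathscr G}_{\mathrm{reg}}$ we have $\mathrm{Sker}\,\rho_m=\ker\rho_m$, hence $(\ker\rho_m)^\circ=\mathrm{Im}(\rho_m^*)$. The map $(r\circ\rho)_m^*\colon T^*_mM\to A^*_m$ has kernel $(T_mL)^\circ$, so it embeds $T^*_mL$ into $\mathrm{Im}(\rho^*_m)$. When $r=\mathrm{id}$ one has $\mathrm{Im}\,\rho_m=T_mL$ at regular points, and the two images coincide.

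For (\ref{(4.12.i)})(a) I consider the family $\mathcal H=\{\mathcal H^G: G\in C^\infty(A^*)\}$ of almost Hamiltonian vector fields of the almost Poisson bracket of Remark~\ref{rmk:Anchored-bundle_SF}. There are three steps.

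\textbf{Step 1: $I$ is a Poisson ideal.} If $k\in\ker\rho$ and $a\in\Gamma(A)$, then $\rho[a,k]_A=[\rho a,0]=0$. Combined with the Leibniz rule, this gives $\{G,I\}\subset I$.

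\textbf{Step 2: the flows of $\mathcal H$ preserve $X:=X_{\mathcal F(\mathscr G)}$.} This is the step I expect to be the main obstacle, since it is where coherence is essential (it also settles the footnote in Lemma~\ref{lem:leaf-preserving}). Along an integral curve $\gamma$ of $\mathcal H^G$ starting in $X$, the functions $u_i=\mathrm{ev}_{k_i}\circ\gamma$ satisfy $\dot u_i=\{G,\mathrm{ev}_{k_i}\}\circ\gamma=\sum_j c_{ij}(t)u_j$, using the local generators and Step~1. Since $u(0)=0$, uniqueness for linear ODEs forces $u\equiv0$. Patching over the finitely many charts crossed in a compact time interval keeps $\gamma$ in $X$.

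\textbf{Step 3: Jacobi holds on $X$.} The Jacobiator takes values in $I$, so it vanishes on $X$. Consequently $[\mathcal H^F,\mathcal H^G]-\mathcal H^{\{F,G\}}$ has coefficients in $I$ and vanishes along $X$.

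With these in hand, the Stefan--Sussmann orbit theorem applied to $\mathcal H$ on $A^*$ shows that the orbits through points of $X$ are immersed submanifolds contained in $X$. By Step~3 and flow invariance, their tangent spaces are $D_\xi=\{\mathcal H^G(\xi)\}$. I then define $\omega_\xi(\mathcal H^F,\mathcal H^G)=\{F,G\}(\xi)$. This is well defined and nondegenerate by construction, and it is closed because Jacobi holds on $X$. The symplectic leaves are therefore these orbits.

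For (\ref{(4.12.i)})(b) I use Corollary~\ref{cor:equivalence} and Lemma~\ref{lem:leaf-preserving}. $(\Phi^\svee)^*$ maps $I_1$ into $I_2$, so $\Phi^\svee$ sends $X^{A_2}$ into $X^{A_1}$, and likewise for $\Psi^\svee$. Since $(\Psi^\svee)^*(\Phi^\svee)^*-\mathrm{id}$ is $I$-valued, the composites are the identity on $X$. The brackets are preserved modulo $I$, so these maps are mutually inverse Poisson scheme isomorphisms. Lemma~\ref{lem:leaf-preserving}(\ref{item:leaf-preserving}) shows that they intertwine Hamiltonian flows, hence carry orbits to orbits.

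For (\ref{(4.12.ii)}) I compute the leaf through the zero covector $0_m$, $m\in L$. At $\xi\in A^*_{m'}$ the vector field $\mathcal H^{\mathrm{pr}^*f}$ is the vertical constant vector $-(r\rho)^*_{m'}df$. The vector field $\mathcal H^{a}$ covers $r\rho(a)$, so along the zero section it is horizontal. The orbit of $0_m$ therefore projects onto $L$, and its fibre over $m'$ is $(r\rho)^*_{m'}T^*_{m'}M\cong T^*_{m'}L$, which sits in $X$ by the computation above. I then check that the map $T^*L\to A^*|_L$ induced by $(r\rho)^*$ is an injective immersion onto this orbit. Finally I verify that $\omega$ pulls back to the canonical form by evaluating it on the functions $\mathrm{pr}^*f$ and $\mathrm{ev}_a$, whose brackets reproduce the canonical relations on $T^*L$.
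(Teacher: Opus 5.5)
Items (i) and (iii) are sound and follow the paper's route. You use the Poisson-ideal property with the linear-ODE argument (the paper's Lemma~\ref{lem:flow-zero-locus}) for flow invariance of $X_{\mathcal F(\mathscr G)}$, then the orbit theorem. Your Lie-derivative argument from Step~3, showing that push-forwards of $\mathcal H^G$ along Hamiltonian flows stay Hamiltonian on $X_{\mathcal F(\mathscr G)}$, is a legitimate substitute for Lemma~\ref{lem:Pushforward-of-hamiltonian}. Defining $\omega$ directly instead of restricting $\pi$ is equivalent. In (ii), reaching every point of $\mathcal T_L:=\bigcup_{m'\in L}\rho'^*_{m'}(T^*_{m'}M)$ (with $\rho'=r\circ\rho$) from $0_m$ works: you flow along the zero section, then translate in the fibre by the $\mathcal H^{\mathrm{pr}^*f}$. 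This is a more direct replacement for the paper's dimension count plus connectedness.

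The gap is in item (ii): you only prove $\mathcal T_L\subseteq\mathcal O_{0_m}$. Your claim that the fibre of the orbit over $m'$ \emph{is} $\rho'^*_{m'}(T^*_{m'}M)$ also needs $\mathcal O_{0_m}\subseteq\mathcal T_L$, and nothing you wrote controls $\mathcal H^a$ at a nonzero point $\xi=\rho'^*_{m'}\alpha$. There $\mathcal H^a$ has a nontrivial vertical component. Its flow is a linear automorphism of $A^*$ covering $\Phi^{\rho'(a)}_t$, and for an arbitrary linear vector field covering $\rho'(a)$ such a flow has no reason to carry $\mathrm{Im}\,\rho'^*_{m'}$ onto $\mathrm{Im}\,\rho'^*_{\Phi_t(m')}$. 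The orbit could then a priori be strictly larger than $\mathcal T_L$, and $T^*L$ would not be a whole leaf. Your argument for (ii) never uses the anchor condition, and that condition is exactly what is needed here.

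The fix is the paper's Claim~3 (diagram~\eqref{eq:diag1}). Let $X=\rho'(a)$ and let $\widehat X:=\{\mathrm{ev}_X,\cdot\}_{T^*M}$ be its cotangent lift. Check that $\widehat X$ and $\mathcal H^a$ are $\rho'^*$-related by testing on generators:
\begin{itemize}
\item on $\mathrm{pr}^*f$ both sides give $\mathrm{pr}^*(\rho'(a)f)$;
\item on $\mathrm{ev}_b$ you get $\widehat X[\mathrm{ev}_{\rho'(b)}]=\mathrm{ev}_{[\rho'(a),\rho'(b)]}=\mathrm{ev}_{\rho'([a,b]_A)}=\mathcal H^a[\mathrm{ev}_b]\circ\rho'^*$, using $\rho'([a,b]_A)=[\rho'(a),\rho'(b)]$.
\end{itemize}
Hence $\Phi^{\mathcal H^a}_t\circ\rho'^*=\rho'^*\circ\Phi^{\widehat X}_t$. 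Since $\Phi^X_t(L)=L$, the flows of the generators $\mathcal H^a$ and $\mathcal H^{\mathrm{pr}^*f}$ preserve $\mathcal T_L$, which gives the reverse inclusion. Because your family consists of all $\mathcal H^G$, you should also note that its orbits coincide with those of these generators. This holds because each $\mathcal H^G$ is locally a $C^\infty(A^*)$-combination of them. Only after this step is your final bracket check identifying $\omega$ with the canonical form on $T^*L$ meaningful.
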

\begin{remark}
Note that, in Theorem \ref{thm:main1}, even when \(M=M_{\mathrm{reg}}^\mathscr{G}\), we do not have \(X_{\mathcal{F}(\mathscr{G})}\cap A^*=A^*\) in general. See Example \ref{ex0}.
\end{remark}
\begin{remark}
Consider a Poisson manifold $(M,\pi\in\Gamma(\wedge^2 TM))$, and let
$\mathscr{G}\subset\mathfrak{X}(M)$ denote its associated symplectic
singular foliation. The latter is the image of the Lie algebroid
anchor map $\pi^\#:A=T^*M\to TM$, known as the cotangent Lie algebroid. Since $\pi$ is skew-symmetric, we have
\(
\langle \pi_x^\#(\alpha),\beta\rangle
=-\langle \alpha,\pi_x^\#(\beta)\rangle
\)
for all $x\in M$, $\alpha\in\ker(\pi_x^\#)$, and
$\beta\in T_x^*M$. That is
\(
\operatorname{Im}\big((\pi_x^\#)^*\big)
=\operatorname{Im}(\pi_x^\#).
\)
We identify $A^*$ with $TM$ so that  $X_{\mathcal{F}(\mathscr{G})}$ sits inside $TM$. Therefore, item \ref{(4.12.iii)} of Theorem \ref{thm:main1}
becomes
\[
X_{\mathcal{F}(\mathscr{G})}\cap TM|_{M_{\mathrm{reg}}}
=\bigsqcup_{x\in M_{\mathrm{reg}}}\operatorname{Im}(\pi_x^\#)
=\bigsqcup_{\substack{L\in M/\mathscr{G}\\
\text{regular}}}TL.
\]
In particular, by item \ref{(4.12.ii)} of Theorem \ref{thm:main1}, the restriction $X_{\mathcal{F}(\mathscr{G})}\cap TM|_{{M}_{\mathrm{sing}}}$ contains all the cotangent bundles of the singular leaves of $\mathscr{G}$.
\end{remark}

\begin{proof}[Proof (of Theorem \ref{thm:main1})] 

The proof of the theorem is rather lengthy; however, we opt not to divide it into sub-sections. Instead, each part of the theorem being proved is clearly emphasized in bold.\\

\noindent
\textbf{Proof of item \ref{(4.12.i)} (a)} : Let $\pi=\Pb$ be the biderivation of $C^\infty(A^*)$ defined in Remark \ref{rmk:Anchored-bundle_SF}.

Let $\mathscr V\subset \mathfrak X(A^*)$ be the $C^\infty(A^*)$-module generated by the vector fields $\mathcal{H}^a, \mathcal{H}^{f}$ for $ a\in \Gamma(A), f\in C^\infty(M)$.\\

\noindent
\textbf{Claim 1}: The vector fields of $\mathscr{V}$ preserve $\langle \eta_A(\ker\rho )\rangle$, i.e., ${v}[\langle \eta_A(\ker\rho )\rangle]\subset \langle \eta_A(\ker\rho )\rangle$ for all $v\in \mathscr{V}$.
\begin{proof}[Proof of (Claim 1)]
    Indeed, for $a\in \Gamma(A), f\in C^\infty(M)$ and $\xi\in \ker \rho$ we have \begin{align*}
    \mathcal{H}^a[\eta_A(\xi)]&=\{\eta_A(a), \eta_A(\xi)\}=\eta_{[a, \xi]_A}\in \langle \eta_A(\ker\rho )\rangle.\\\\\mathcal{H}^{f}[\eta_A(\xi)]&=\{\mathrm{pr}^*f, \eta_A(\xi)\}=-\underbrace{r\circ\rho(\xi)}_{=0}[f]\circ \mathrm{pr}=0.
\end{align*}
The condition $\eta_{[a, \xi]_A}\in \langle \eta_A(\ker\rho )\rangle$ holds because $\rho([a, \xi]_A)=[\rho(a), \underbrace{\rho(\xi)}_{=0}]_E=0$. This proves the claim.
\end{proof}

Now, since $\ker\rho$ is locally finitely generated, so is the ideal $\langle \eta_A(\ker\rho )\rangle$. By Lemma \ref{lem:flow-zero-locus}, the flow $\Phi^{\mathcal H^{\nu}}_t$ preserves ($\nu= a$ or $\nu= \mathrm{pr}^*f$ for some $a\in\Gamma(A), f\in C^\infty(M)$) the zero locus of the ideal $\langle \eta_A(\ker\rho )\rangle$. By Nagano–Sussmann (Theorem \ref{thm:Nagano–Sussman}), for $\alpha \in A^*$, the set 
\begin{equation}\label{eq:NaganoSuss}
        \mathcal{O}_\alpha:=\left\{\Phi_{t_1}^{\mathcal{H}^{\nu_{1}}}\circ \Phi_{t_2}^{\mathcal{H}^{\nu_{2}}}\circ \cdots \circ \Phi_{t_k}^{\mathcal{H}^{\nu_{k}}}(\alpha),\; t_i\in \mathbb R, i=1,\ldots,k\in \mathbb{N}\right\}\subset A^*
    \end{equation}
 is an immersed submanifold. They form a partition of $A^*$. We implicitly assume that the flows are defined. 

 Since the flows of the vector fields $\mathcal{H}^{\nu_{i}}$'s preserve the zero locus $ Z_{\langle \eta_A(\ker\rho )\rangle}=X_{\mathcal{F}(\mathscr{G})}$ of the ideal $\langle \eta_A(\ker\rho )\rangle$, for every $\alpha\in X_{\mathcal{F}(\mathscr{G})}$ we have $\mathcal{O}_\alpha\subset X_{\mathcal{F}(\mathscr{G})}$.
\\

\noindent
\textbf{Claim 2}: The manifolds $\mathcal{O}\equiv\mathcal{O}_\alpha$ are symplectic for all $\alpha\in  X_{\mathcal F(\mathscr{G})
} =Z_{\langle \eta_A(\ker\rho )\rangle}$.

\begin{proof}[Proof of (Claim 2)]
    To start with, the restriction $(\mathcal{O}, \pi|_\mathcal{O})$ of the bivector field $\pi$ to $\mathcal{O}$ is well-defined and is a Poisson manifold (since  $(i)$ the vector fields $\pi(dF)=\{F, \cdot\}$ are tangent to $\mathcal{O}$ for $F\in C^\infty(A^*)$ $(ii)$ the Jacobiator vanishes on $\mathcal{O}\subset Z_{\langle \eta_A(\ker\rho )\rangle}$). Consider the vector bundle map  $\pi^\#\colon T^*A^*\to TA^*, \nu\mapsto \iota_\nu\pi$ and its restriction \[\pi^\#|_{\mathcal{O}}\colon T^*\mathcal{O}\to T\mathcal{O}\] to the orbit $\mathcal{O}$. For $\xi_x\in \mathcal{O}\subset Z_{\langle \eta_A(\ker\rho )\rangle}$ the tangent space of $\mathcal{O}$ at $\xi_x$ is \[\mathrm{Span}\bigr\{\phi_*(\mathcal{H}^{\nu})(\xi_x)\mid \nu= a\quad \text{or}\quad \nu= \mathrm{pr}^*f \quad \text{for}\quad a\in\Gamma(A), f\in C^\infty(M)\bigr\}\subset T_{\xi_x}A^* \]where $\phi$ is of the form $\Phi_{t_1}^{\mathcal{H}^{\nu_{1}}}\circ \Phi_{t_2}^{\mathcal{H}^{\nu_{2}}}\circ \cdots \circ \Phi_{t_k}^{\mathcal{H}^{\nu_{k}}}$. By Lemma \ref{lem:Pushforward-of-hamiltonian}, the push forward  of the vector field $\mathcal{H}^{\nu}$ through $\phi$ applied to $\xi_x$ is the evaluation  of a vector field of the form $\{F, \cdot\}$ for some $F\in C^\infty(A^*)$. Therefore\[\im (\pi^\#_{\xi_x})=T_{\xi_x}\mathcal{O}.\]
Hence, $\pi|_\mathcal{O}$ is a regular Poisson bivector field whose rank is $\dim \mathcal O$. Thus, there exists a unique symplectic structure $\omega_\mathcal{O}$ on $\mathcal{O}$, where $\pi|_\mathcal{O}$ is the canonical Poisson structure associated with~$\omega_\mathcal O$.
\end{proof}This proves Item \ref{(4.12.i)} (a).\\

\noindent
\textbf{Proof of item \ref{(4.12.i)} (b)} : By Proposition \ref{prop:equivalence} and Corollary \ref{cor:equivalence}, there exist two vector bundle maps
\[
    \xymatrix{
    A_2^* \ar@<2pt>[r]^{\Phi^\svee} &
    A_1^* \ar@<2pt>[l]^{\Psi^\svee}
    }
\]
such that the induced pullback maps
\(
    \xymatrix{
    C^\infty(A_1^*) \ar@<2pt>[r]^{(\Phi^\svee)^*} &
    C^\infty(A_2^*) \ar@<2pt>[l]^{(\Psi^\svee)^*}
    }
\)
satisfy
\begin{align}
    \label{eq:A}
    &(\Phi^\svee)^*(\mathrm{ev}_a)
    = \mathrm{ev}_{\Phi(a)},\quad\text{for}\; a\in \Gamma(A_1);&\\
    &(\Phi^\svee)^*(\mathrm{ev}_{\ker\rho_{A_1}})
    \subset \mathrm{ev}_{\ker\rho_{A_2}};&\\
    \label{eq:Poisson_iso}
    &(\Phi^\svee)^*(\mathrm{ev}_{[a,b]_{A_1}})
    - \mathrm{ev}_{[\Phi(a),\Phi(b)]_{A_2}}
    \in \mathrm{ev}_{\ker\rho_{A_2}};&\\&(\Phi^\svee)^*(\mathrm{pr}_{1}^*(r\circ\rho_{A_1}(a)f))
    = \mathrm{pr}^*_{2}(r\circ\rho_{A_2}(\Phi(a))f), 
    \quad \text{for}\; a\in \Gamma(A_1),\, f\in C^\infty(M);&\\
    \label{eq:B}
    &(\Phi^\svee)^*\circ(\Psi^\svee)^*-\mathrm{Id}
    \quad \text{is $(\mathrm{ev}_{\ker\rho_{A_2}})$-valued}.&
\end{align}
Analogous relations hold for $(\Psi^\svee)^*$. These imply that
\begin{itemize}
    \item the maps $\Phi^\svee$ and $\Psi^\svee$ send
    $X_{\mathcal{F}(\mathscr{G})}^{A_2}$ and
    $X_{\mathcal{F}(\mathscr{G})}^{A_1}$ to each other, respectively;
\item  the maps $(\Phi^\svee)^*$ and $(\Psi^\svee)^*$ induce mutually inverse Poisson $C^\infty$-ring isomorphims\[
    \xymatrix{
    C^\infty(A_1^*)/\langle \mathrm{ev}_{\ker\rho_{A_1}}\rangle \ar@<2pt>[r] &
    C^\infty(A_2^*)/\langle \mathrm{ev}_{\ker\rho_{A_2}}\rangle \ar@<2pt>[l]
    };
\]
    \item the restrictions of $\Phi^\svee$ and $\Psi^\svee$ to
    $X_{\mathcal{F}(\mathscr{G})}^{A_2}$ and
    $X_{\mathcal{F}(\mathscr{G})}^{A_1}$, respectively, are leaf-preserving (by item \ref{item:leaf-preserving} of Lemma \ref{lem:leaf-preserving});

    \item these restrictions are inverse to each other.
\end{itemize}

\vspace{0.3cm}

\noindent
\textbf{Proof of item \ref{(4.12.ii)}} : Let $\rho':=r\circ\rho$ and  $L\subset M$ be a leaf of the singular foliation $\rho'(\Gamma(A))=r(\mathscr{G})\subset \mathfrak X(M)$.  The restriction of the anchor map $\rho'$ to $L$ induces a surjective vector bundle morphism\[\rho'|_L\colon A|_L\to TL.\] The latter comes equipped with an almost Lie algebroid structure, in particular it is bracket preserving. The dual map is an immersion $i\colon T^*L\hookrightarrow A^*, \alpha_x\mapsto \rho'^*_x(\alpha_x)$.

We need to check that $T^*L\subset  Z_{\langle \eta_A(\ker\rho )\rangle}$: let $\alpha_m\in T^*_mL$ and $\xi\in\ker\rho$\begin{align*}
    \langle \rho'^*_m(\alpha_m), \xi(m)\rangle&=\langle \rho^*_m\circ r^*_m(\alpha_m), \xi(m)\rangle\\&=\alpha_m\circ r_m\circ \rho_m(\xi(m))=0.
\end{align*}

\noindent
\textbf{Claim 3}. The vector fields $\mathcal{H}^{\nu}\in \mathfrak X(A^*)$ ($\nu= a\quad \text{or}\quad \nu= \mathrm{pr}^*f \quad \text{for}\quad a\in\Gamma(A), f\in C^\infty(M)$) are tangent  $T^*L$ for $L\in M/ \mathcal G$, and their flows preserves $T^*L$  at all time.

\begin{proof}[Proof of (Claim 3)] Let $L\in M/
\mathcal G$ be a leaf of the singular subalgebroid $\mathscr{G}$. Let $f\in C^\infty(M)$ and consider the vector field $\mathcal H^{f}=\{\mathrm{pr}^*f, \cdot\}\in \mathfrak X(A^*)$. Since for all $g\in C^\infty(M)$, $\{\mathrm{pr}^*f, \mathrm{pr}^*g\}=0$, the vector field $\mathcal{H}^f$ is $\mathrm{pr}$-projectable to zero, i.e., vertical. On the other hand, for every section $a\in \Gamma(A)$ and $\xi\in  T^*_mL\hookrightarrow A^*_m$, \begin{align*}
    \underbrace{\rho'^*_m(d_mf|_L)}_{\in\, \mathrm{im}(\rho'^*_m)}(a(m))&=\langle d_mf|_L, \rho'(a(m)\rangle=\rho'(a)[f](m)\\&=\{\mathrm{ev}_a, \mathrm{pr}^*f\}(\xi)=\mathcal{H}^f[\mathrm{ev}_a](\xi).  
\end{align*}
Hence, $\mathcal{H}^f$ is tangent to $T^*L$. In addition, $\mathcal{H}^f$ is constant along of the fibers of $T^*L$, thus $\Phi^{\mathcal{H}^f}_t(T^*L)=T^*L$ for all $t$ where it makes sense.\\

\noindent
Note that for  $a\in \Gamma(A)$, the vector field $\mathcal{H}^a=\{\mathrm{ev}_a, \cdot\}\in \mathfrak X(A^*)$ associated with the evaluation function $\mathrm{ev}_a\colon A^*\to \mathbb R$  is a linear vector field on $A^*$ which projects to $\rho'(a)$, that is, the following diagram is a vector bundle morphism $$\xymatrix{A^*\ar[r]^{\mathcal{H}^a} \ar[d]_{\mathrm{pr}}&TA^*\ar[d]^{T\mathrm{pr}}\\M\ar[r]^{\rho'(a)}&TM}$$ In particular, the flow $\phi^{\mathcal{H}^a}_t\colon A^*\to A^*$ of the vector field $\mathcal{H}^a$ is an isomorphism of vector bundles.\\

Now, the proof proceeds in the following two steps.
\begin{enumerate}
    \item [(a)] For $a\in \Gamma(A)$ with $X=\rho'(a)$, the flow $\phi^{\mathcal{H}^a}_t\colon A^*\to A^*$ of the vector field $\mathcal{H}^a$ on $A^*$ exists whenever the flow $\Phi^X_t$ exists and the following diagram commutes \begin{equation}\label{eq:diag1}
\xymatrix{T^*M\ar[r]^{(T\Phi_t^X)^*}\ar[d]_{\rho'^*}&T^*M\ar[d]^{\rho'^*}\\ A^*\ar[d]\ar[r]^{\Phi_t^{\mathcal{H}^a}}&A^*\ar[d]^{}\\M\ar[r]^{\Phi_t^X} &M.}
\end{equation}
 To see that, let $\mathcal{H}_X$ denotes the Hamiltonian vector field associated with $X\in \mathfrak X(M)$ w.r.t the canonical Poisson structure of $T^*M$. The vector field $(\mathcal{H}_X+\mathcal{H}^a, X)$ is a linear vector field on $T^*M\oplus A^*$ that is tangent to the graph of $\rho'^*\colon T^*M\to A^*$$$ \left\{ (\xi, \rho'^*(\xi) ) \,|\,  \xi \in T^*M \right\} \subset T^*M \oplus A^*. $$  Hence, the diagram \eqref{eq:diag1} commutes. The complete argument is very similar to \cite[Proposition 1.8]{Lo1}.  

\item [(b)] For every $m\in M$, the flow $\Phi_t^{\mathcal{H}^a}$ (whenever it makes sense) sends $\mathrm{Im}{\rho'^*_m}$ to  $\mathrm{Im}\rho'^*_{\Phi^X_t(m)}$, that is
$$\Phi_t^{\mathcal{H}^a}\left(\mathrm{Im}{\rho'^*_m}\right)=\mathrm{Im}\rho'^*_{\Phi^X_t(m)}$$in view of Diagram \eqref{eq:diag1}. In particular,  for every $m\in L\in M/\mathscr{G}$\[\Phi_t^{\mathcal{H}^a}\left(T^*_mL\right)=T^*_{\Phi^X_t(m)}L.\] That is, \(\Phi_t^{\mathcal{H}^a}\left(T^*L\right)=T^*L.\)

\end{enumerate}
\end{proof}
It follows that for $\xi\in T_m^*L$, $\mathcal{O}_\xi\subset T^*L$. In particular,
\(
\operatorname{im}\pi_\xi^\#\subseteq T_\xi(T^*L).
\)
Let $k=\dim L$. Choose $a_1,\ldots,a_k\in\Gamma(A)$ such that
$\rho'(a_1)(m),\ldots,\rho'(a_k)(m)$ form a basis of $T_mL$, and choose
$f_1,\ldots,f_k\in C^\infty(M)$ such that
\[
d_mf_j\bigl(\rho'(a_i)(m)\bigr)=\delta_{ij}.
\]
Then
\[
\mathcal H^{a_1}(\xi),\ldots,\mathcal H^{a_k}(\xi),
\mathcal H^{f_1}(\xi),\ldots,\mathcal H^{f_k}(\xi)
\]
are linearly independent. Indeed, the $\mathcal H^{a_i}$ project to the basis
$\rho'(a_i)(m)$ of $T_mL$, whereas the $\mathcal H^{f_j}$ are vertical and satisfy
\[
\mathcal H^{f_j}[\operatorname{ev}_{a_i}](\xi)=-\delta_{ij}.
\]
Therefore,
\(
\dim\operatorname{im}\pi_\xi^\#\geq 2k.
\)
Since
\(
\dim T_\xi(T^*L)=2k,
\)
the above inclusion is an equality:
\[
{\operatorname{im}\pi_\xi^\#=T_\xi(T^*L).}
\]
Since $L$ is connected, so is $T^*L$. This implies that $\mathcal{O}_\xi$ is open in  $T^*L$, thus $T^*L$ is a disjoint union of open sets. This forces $\mathcal{O}_\xi=T^*L$. Therefore, $\bigr(T^*L, (i^{-1})^*\omega_{can}\bigr)\hookrightarrow X_{\mathcal{F}(\mathscr{G})}\subset A^*$ is a symplectic leaf of  $X_{\mathcal{F}(\mathscr{G})}$, where $\omega_{can}$ is the canonical symplectic form on $T^*L$. This proves Item \ref{(4.12.ii)}.

Finally, we check that \[X_{\mathcal{F}(\mathscr{G})}\cap A^*|_{{M}_{\mathrm{reg}}^\mathscr{G}}= \bigsqcup_{m\in M_{\mathrm{reg}}^\mathscr{G}}\mathrm{Im}(\rho^*_m).\]Let $m\in M_{\mathrm{reg}}^\mathscr{G}$ and  $\alpha\in X_{\mathcal{F}(\mathcal G)}\cap A^*_m$. There is an open neighborhood $\mathcal{U}$ of $m$ such that $\ker\rho|_\mathcal{U}\subset \Gamma(A)|_\mathcal{U}$ is a vector bundle over $\mathcal U$. Every $u\in \ker\rho_m$ admits a (local) extension $\widetilde{u}\in \ker \rho|_\mathcal{U}$. We have \[\langle \alpha, u\rangle=\langle \alpha, \widetilde{u}(m)\rangle=\mathrm{ev}_{\widetilde{u}}(\alpha)=0\quad \text{by definition of $\alpha$}.\]This implies that $\alpha\in (\ker\rho_m)^\circ=\mathrm{Im}(\rho_m^*)$. The other inclusion is immediate. The incution \[\bigcup_{m\in M_{\mathrm{reg}}^{\mathscr{G}}}\mathrm{Im}(\rho^*_m)\supseteq \bigcup_{\substack{ m\in M_{\mathrm{reg}}^{\mathscr{G}}\cap L\\
L\in M/\mathscr{G}}}T^*_mL\] follows from the relation $\mathrm{Im}(\rho'^*)=\mathrm{Im}(r\circ\rho)^*=\mathrm{Im}(\rho^*\circ r^*)\subseteq \mathrm{Im}(\rho^*)$.
\end{proof}
We end the section with some examples.
\begin{example}\label{ex0} Let $\mathscr{G}=\{0\}$ be the trivial singular subalgebroid. The corresponding Poisson $C^\infty$-ring is $\mathcal{F}(\mathscr{G})\simeq C^\infty(M)$, with zero Poisson bracket. We have \[ \Spec(\mathcal F(\mathcal G))\cong \Spec(C^\infty(M))=(M, C^\infty),\]  and  every point of \(M\) is a zero-dimensional symplectic leaf. 
\end{example}
\begin{example}\label{ex1}
Let $\mathscr{G}\subset \mathfrak X(M)$ be a projective\footnote{That is, $\mathscr{G}$ is a finitely generated projective $C^\infty(M)$-module.} singular foliation so that there is a vector bundle $A$ with $\Gamma(A)\simeq \mathscr{G}$. In particular, there is a Lie algebroid structure on $A$ whose anchor map is injective on an open dense subset. Then $\Spec (\mathcal{F}(\mathscr{G}))=\Spec(C^\infty(A^*))=(A^*, C^\infty_{A^*})$. The symplectic leaves of $X_{\mathcal{F}(\mathscr{G})}=A^*$ are the leaves of the linear Poisson structure on $A^*$.
\end{example}
\begin{example}\label{ex2}
Let $\mathcal G$ be the singular foliation on $M=\mathbb R^3$ generated by the image of the cotangent Lie algebroid $\rho\colon A=T^*\mathbb R^3\to T\mathbb R^3$,  $\rho(dx)= z\frac{\partial}{\partial y}-y\frac{\partial}{\partial z}$,\, $\rho(dy)=x\frac{\partial}{\partial z}-z\frac{\partial}{\partial x}$,\, $\rho(dz)= y\frac{\partial}{\partial x}-x\frac{\partial}{\partial y}$. The latter can be seen as the induced singular foliation of the coadjoint representation of $\mathfrak{so}(3)$. The kernel of $\rho$ is globally generated by the section $\theta=xdx+ydy+zdz$, i.e.,  \[ \ker\rho = C^\infty(\mathbb{R}^3)\theta. \] In particular, \(\ker\rho\) is globally finitely generated. Note that $\theta$ is in the center of the bracket of $A$.

By using coordinates $(x,y,z;u,v,w)$ on \(T\mathbb{R}^3=\mathbb{R}^3\times\mathbb{R}^3\), where \( u=\operatorname{ev}_{dx},\; v=\operatorname{ev}_{dy}, \; w=\operatorname{ev}_{dz}. \) The fiberwise linear function corresponding to \(\theta\) is \(\eta_A(\theta)=xu+yv+zw. \) Therefore \[ \cF(\mathscr{G}) = \frac{C^\infty(T\mathbb{R}^3)} {\langle xu+yv+zw\rangle}.\] We denote by $\bar F$ the projection of an element $F\in C^\infty(T\mathbb{R}^3)$ onto $\mathcal{F}(\mathcal G)$. The Poisson structure is determined by the brackets \begin{align*}
    &\{\bar u, \bar v\}:=\bar w \;; \{\bar u, \bar w\}=-\bar v \;; \{\bar v, \bar w\}:=\bar u && \text{(relation \eqref{eq:Rinehart-Poisson1})}\\&\{\bar u, \bar y\}:=\bar z\;;\{\bar u, \bar z\}:=-\bar y\;; \{\bar v, \bar x\}:=-\bar z \;;\{\bar v, \bar z\}:=\bar x \;;\{\bar w, \bar x\}:=\bar y\;;\{\bar w, \bar y\}:=-\bar x && \text{(relation \eqref{eq:Rinehart-Poisson2})}\\&\{\bar u, \bar x\}=\{\bar v, \bar y\}=\{\bar w, \bar z\}=\{\bar x, \bar y\}=\{\bar x, \bar z\}=\{\bar y, \bar z\}:=0&& \text{(relation \eqref{eq:Rinehart-Poisson3})}.
\end{align*}
The underlying space of the Poisson $C^\infty$-scheme $\Spec(\mathcal F(\mathcal G))$ is \begin{align*}
    X_{\cF(\mathscr{G})}&= \left\{ (x,y,z;u,v,w)\in T\mathbb{R}^3 \mid xu+yv+zw=0 \right\}\subset \mathbb{R}^6\\&=(\{0\}\times \mathbb R^3) \bigsqcup_{(x, y, z)\neq 0}T_{\frac{(x, y, z)}{\lVert (x,y,z) \rVert}}^*S^2.
\end{align*}
The symplectic leaves decomposition of $X_{\cF(\mathscr{G})}$ are exactly the symplectic leaves of the linear Poisson structure on $T\mathbb{R}^3$ associated with the cotangent Lie algebroid $T^*\mathbb{R}^3\to T\mathbb R^3$ containing a point of $X_{\cF(\mathscr{G})}$. In particular,
\begin{itemize}
    \item $\{0\}$ is a symplectic leaf; more generally $T^*S^2_r$ is a symplectic leaf, where $S^2_r$ denotes the 2-sphere of radius $r\in \mathbb{R}_{+}^*$. Equivalently, since the functions $C_1=x^2+y^2+z^2$ and $C_2=\eta_A(\theta)=xu+yv+zw$ are Casimir and $C=(C_1, C_2)\colon \mathbb{R}^6\to \mathbb{R}^2$ is a submersion outside of $\{0\}\times \mathbb{R}^3$, all the 4-dimensional symplectic leaves of $X_{\cF(\mathscr{G})}$ are exactly the level sets $C^{-1}(r^2, 0)$, $r>0$.
    \item the 2-dimensional symplectic leaves  of $X_{\cF(\mathscr{G})}$ are  concentric spheres (except zero) in the fiber $\{0\}\times \mathbb{R}^3_{u, v, w}\simeq \mathfrak{so}(3)^*$.\end{itemize}

\end{example}

\begin{example}\label{ex:vanishing_order2}
Let $\mathscr{G} \subset \mathfrak{X}(\mathbb{R}^2)$ be the singular foliation generated by vector fields on $\mathbb{R}^2$ that vanish to second order. Specifically, $\mathscr{G}$ is globally generated by the vector fields
\[x^2\partial_x,\, y^2\partial_x,\, xy\partial_x,\, x^2\partial_y,\, y^2\partial_y,\, xy\partial_y.\]
Define the anchored bundle
\[
\rho:  A=\mathbb{R}^2\times \mathbb{R}^6 \longrightarrow T\mathbb{R}
\]
by sending the standard frame \(e_1, \dots, e_6\) to
\[
\tau(e_1) = x^2 \partial_x, \quad
\tau(e_2) = xy \partial_x, \quad
\tau(e_3) = y^2 \partial_x,
\]
\[
\tau(e_4) = x^2 \partial_y, \quad
\tau(e_5) = xy \partial_y, \quad
\tau(e_6) = y^2 \partial_y.
\]
The  kernel of $\rho\colon \Gamma(A)\to \mathfrak X(\mathbb{R}^2)$ is given by
\[
\ker(\rho) =\operatorname{Span}_{C^\infty(\mathbb{R}^2)} \{ x e_2-y e_1,\; x e_3-y e_2,\; x e_5-y e_4 ,\; x e_6-y e_5 \}.
\]

Let $\xi_i,
,i=1,\ldots,6$ be the standard linear coordinates on $\mathbb{R}^6$. The corresponding $C^\infty$-ring of $\mathscr{G}$ is \[\cF(\mathscr{G})=C^\infty(\mathbb R^2\times \mathbb R^6)/\langle\, x \xi_2-y \xi_1,\; x \xi_3-y \xi_2,\; x \xi_5-y \xi_4 ,\; x \xi_6-y \xi_5 \,\rangle.\]  
We have used the identification $A^*_{(x, y)}\simeq \mathbb{R}^6$. See \cite[Example 6.16]{LL} for the description of the $C^\infty$-Poisson structure on $\mathcal{F}(\mathscr{G})$. The underlying space of the Poisson $C^\infty$-scheme $\Spec(\mathcal F(\mathcal G))$ is\[X_{\mathcal{F}(\mathscr{G})}=A^*_{(0,0)}\cup \bigr(T^*(\mathbb{R}^2\setminus \{(0,0)\})\bigr)=\mathbb{R}^6\cup \bigr(\mathbb{R}^2\setminus \{(0,0)\}\times \mathbb{R}^2\bigr)\hookrightarrow \mathbb{R}^8.\]
The symplectic leaves of $X_{\mathcal{F}(\mathscr{G})}$ consist of the singletons in $\mathbb{R}^6 \hookrightarrow \mathbb{R}^2 \times \mathbb{R}^6$ and the 4-dimensional leaf $(\mathbb{R}^2 \setminus \{(0,0)\}) \times \mathbb{R}^2$.
\end{example}

The conclusion of Theorem~\ref{thm:main1} may still hold without the assumption of (local) finite generation, as shown by the following example.
\begin{example}
    Consider the singular foliation
\[
\mathcal G
=
\left\langle
\chi(x)\partial_x
\right\rangle\subset \mathfrak X(\mathbb R),
\qquad
\chi(x)=
\begin{cases}
e^{-1/x^2}, & x>0,\\
0, & x\le 0.
\end{cases}
\] Let $A=\mathbb R\times\mathbb R$ be the trivial line bundle with global frame $e\equiv 1$ and anchor
\[
\rho(e)=\chi(x)\partial_x.
\]
Here $\ker\rho$ is not locally finitely generated, see \cite[Example 3.38]{LLS}. The corresponding Poisson $C^\infty$-ring of $\mathcal G$ is \[\mathcal{F}(\mathscr{G})=C^\infty(\mathbb R^2)/\langle f(x)\xi, f|_{[0, \infty)}\equiv 0\rangle\] where $x, \xi$ are the global coordinates on $\mathbb{R}^2=\mathbb{R}\times \mathbb R$. The Poisson structure is induced by the biderivation \[\{\xi, x\}=h
(x).\]Note that $\mathscr{G}$ does arise from a Lie algebroid structure on $A$ \cite{LLS}. This can also be seen without any calculations: the linear Poisson structure on $\mathcal{F}(\mathscr{G})$ extends to $C^\infty(\mathbb{R}^2)$ for dimensional reasons.

The underlying space of the Poisson $C^\infty$-scheme $\Spec(\mathcal F(\mathcal G))$ is \[X_{\mathcal{F(\mathcal G)}}=(\mathbb{R}^{\times}_{-}\times \{0\})\cup (\mathbb{R}_{+}\times \mathbb{R})\subset \mathbb{R}^2.\]
The symplectic leaf decomposition of $X_{\mathcal{F}(\mathcal G)}$ consists of the $0$-dimensional leaves $\{(0, b)\}$, $\{(a,0)\}$, for $a<0$ and $b\in \mathbb{R}$, together with the $2$-dimensional leaf $\mathbb{R}^{\times}_{+}\times\mathbb{R}\subset\mathbb{R}^2$.

\end{example}

\section{Helffer-Nourrigat cone of a singular subalgebroid}\label{sec:6}
As we have seen in the previous section, every singular subalgebroid $\mathscr{G}$ canonically determines a Poisson $C^\infty$-ring $\mathcal{F}(\mathscr{G})$. Moreover, if $\mathscr{G}$ arises as the image of an anchored bundle $A \to E$, then there is a closed embedding
\[
\Spec(\mathcal{F}(\mathscr{G})) \hookrightarrow \Spec(C^\infty(A^*)) = (A^*, C^\infty_{A^*}),
\]
and the underlying space $X_{\mathcal{F}(\mathscr{G})}$ of $\Spec(\mathcal{F}(\mathscr{G}))$ is a disjoint union of immersed symplectic submanifolds of $A^*$.

On the other hand, there is another canonical Poisson $C^\infty$-ring naturally associated with $\scG$, namely the Helffer-Nourrigat cone \cite{HN,AMY}, which arises in the study of longitudinal differential operators on singular foliations. The Helffer–Nourrigat cone of a singular foliation admits a characterization in terms of anchored vector bundles and the Nash blowup\cite{Lo1,Lo2}, as presented in \cite{Lo3}. In this section, we compare the two resulting Poisson $C^\infty$-ring structures. 

In what follows, we assume that 
our singular subalgebroid $\scG$ is finitely generated so that there is an anchored bundle $\rho\colon A\to E$ over $\scG$, i.e.,  such that $\rho(\Gamma(A))=\scG$. We shall also denote by $\rho'\colon A\to TM$ the composition $r\circ\rho$. Here $r\colon E\to TM$ is the anchor map of the Lie algebroid $E$.

\begin{definition}
Let $\rho\colon A\to E$ be an anchored bundle over $\scG$ and $ \rho^* \colon  E^* \to A^*$ the dual of $\rho$. The \emph{\underline{basic} Helffer-Nourrigat cone of  $\scG$}, is the closed subset of $ A^*$ given by 
 \[{\mathrm{HN}}(\scG):= \overline{ \cup_{m\in M_{\mathrm{reg}}} \rho_m'^* (T_m^*M)}\subset A^*.\]We shall denote by $\mathfrak P\colon{\mathrm{HN}}(\scG) \to M $ the restriction of the projection $A^* \to M$ to the ${\mathrm{HN}}(\scG)$.
\end{definition}

\begin{remark}
The basic Helffer-Nourrigat cone of $\mathscr{G}$ is in fact the Helffer-Nourrigat cone (in the sense of \cite{AMY,Lo3}) of the basic singular foliation $r\circ \rho(\Gamma(A))\subset \mathfrak X(M)$ of $\mathscr{G}$, which justifies the adjective ``basic.'' One could, of course, define a cone that genuinely depends on $\mathscr{G}$ (by replacing $\rho_m'^* (T_m^*M)$ by $\rho_m^* (E_m^*)$) on  $M_{\mathrm{reg}}^\mathscr{G}$ rather than merely on its basic singular foliation, but we do not pursue that here. In particular, when $E=TM$ and $r=\mathrm{id}_{TM}$, so that $\mathscr{G}\subset \mathfrak X(M)$ is a singular foliation, the definition recovers the usual notion of the Helffer-Nourrigat cone of $\mathscr{G}$. In what follows, we shall omit the term ``basic.''
\end{remark}
\begin{remark}\label{rmk:cone}
The closed subset ${\mathrm{HN}}(\scG)\subset A^*$ is indeed a cone, as it is, by construction, stable by multiplication by a scalar.
\end{remark}

In the remainder of this section, we will focus on singular foliations, that is when $E=TM$ and $\mathscr{G}\subset \mathfrak X(M)$ is a singular foliation.
\begin{proposition}[\cite{Lo3}]Let $\mathfrak P\colon {\mathrm{HN}}(\mathcal F) \to M $ be the Helffer-Nourrigat cone of a singular foliation associated with an anchored bundle $\rho\colon A\to TM$.
\begin{enumerate}
\item For every $m \in M$, the fiber of $\mathfrak P$ is non-empty, and is contained in the annihilator of \[\mathrm{Sker}_m(\rho)=\{u\in A_m\mid \exists\; \text{local section $\widetilde{u}$ through $u$ with}\; \rho(\widetilde{u})=0\}\subseteq \ker\rho_m.\]
\item  $\mathfrak P^{-1}(m)={\mathrm{Im}}(\rho^*_m)={\mathrm{ker}}(\rho_m)^{\circ}$, for every\footnote{We use the symbol $ {}^\circ$ for the annihilator.} $m \in M_{\mathrm{reg}}$.
\end{enumerate}
\end{proposition}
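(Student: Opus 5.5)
The plan is to argue directly from the definition of $\mathrm{HN}(\mathscr{G})$ as a closure, using only continuity of the fiberwise linear functions $\mathrm{ev}_a$ and the fact that $\rho$ has locally constant rank on $M_{\mathrm{reg}}$. Throughout, $E=TM$ and $r=\mathrm{id}$, so $\rho'=\rho$, and $M_{\mathrm{reg}}$ is the open dense set on which $m\mapsto \dim \mathrm{Im}\,\rho_m$ is locally constant.

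\textbf{Nonemptiness in item (1).} For each $m\in M_{\mathrm{reg}}$ the zero covector $0_m$ lies in $\rho_m^*(T_m^*M)$. Hence the zero section over $M_{\mathrm{reg}}$ is contained in $\mathrm{HN}(\mathscr{G})$. Since $M_{\mathrm{reg}}$ is dense in $M$ and the zero section is a closed embedded copy of $M$ in $A^*$, its closure is the whole zero section. Therefore $0_m\in\mathfrak P^{-1}(m)$ for every $m\in M$.

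\textbf{Annihilator statement in item (1).} Let $u\in \mathrm{Sker}_m(\rho)$, and choose a neighborhood $\mathcal U$ of $m$ and a local section $\widetilde u\in\Gamma(A)|_{\mathcal U}$ through $u$ with $\rho(\widetilde u)=0$.
\begin{itemize}
\item For any $m'\in \mathcal U\cap M_{\mathrm{reg}}$ and $\alpha\in T^*_{m'}M$ we have $\mathrm{ev}_{\widetilde u}(\rho^*_{m'}\alpha)=\langle\alpha,\rho(\widetilde u)(m')\rangle=0$.
\item So the continuous function $\mathrm{ev}_{\widetilde u}$ on the open set $A^*|_{\mathcal U}$ vanishes on $\bigcup_{m'\in\mathcal U\cap M_{\mathrm{reg}}}\rho^*_{m'}(T^*_{m'}M)$.
\item Because $A^*|_{\mathcal U}$ is open, $\mathrm{HN}(\mathscr{G})\cap A^*|_{\mathcal U}$ is contained in the closure, taken inside $A^*|_{\mathcal U}$, of that union. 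Hence $\mathrm{ev}_{\widetilde u}$ vanishes on $\mathrm{HN}(\mathscr{G})\cap A^*|_{\mathcal U}$.
\item Evaluating at any $\xi\in\mathfrak P^{-1}(m)$ gives $\langle \xi,u\rangle=0$.
\end{itemize}
This shows $\mathfrak P^{-1}(m)\subseteq \mathrm{Sker}_m(\rho)^\circ$.

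\textbf{Item (2).} Fix $m\in M_{\mathrm{reg}}$ and a connected open neighborhood $\mathcal U\subseteq M_{\mathrm{reg}}$ on which $\rho$ has constant rank.
\begin{itemize}
\item On $\mathcal U$, $\ker\rho$ is a smooth subbundle of $A|_{\mathcal U}$. Hence $\mathrm{Im}\,\rho^*=(\ker\rho)^\circ$ is a subbundle of $A^*|_{\mathcal U}$, and in particular a closed subset of $A^*|_{\mathcal U}$. The identity $\mathrm{Im}(\rho_{m'}^*)=\ker(\rho_{m'})^\circ$ at each point is standard linear algebra.
\item As in the previous step, $\mathrm{HN}(\mathscr{G})\cap A^*|_{\mathcal U}$ equals the closure, inside $A^*|_{\mathcal U}$, of $\bigcup_{m'\in\mathcal U}\rho^*_{m'}(T^*_{m'}M)$. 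Contributions from regular points outside $\mathcal U$ cannot accumulate inside the open set $A^*|_{\mathcal U}$ beyond this.
\item Since that union is already closed in $A^*|_{\mathcal U}$, intersecting with the fiber over $m$ gives $\mathfrak P^{-1}(m)=\mathrm{Im}(\rho^*_m)=\ker(\rho_m)^\circ$.
\end{itemize}

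\textbf{Main obstacle.} The only delicate point is the localization of the closure: one must check that closure in $A^*$ intersected with the open set $A^*|_{\mathcal U}$ coincides with closure inside $A^*|_{\mathcal U}$ of the part of the union lying over $\mathcal U$. This is elementary point-set topology, since $A^*|_{\mathcal U}$ is open and the union over $M\setminus\mathcal U$ lies outside it. It is nonetheless the step where one uses that $\mathcal U$ can be chosen entirely inside $M_{\mathrm{reg}}$ with constant rank.
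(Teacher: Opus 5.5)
Your argument is correct and complete. Note that the paper gives no proof of this proposition: it quotes it from \cite{Lo3}, where the cone is characterized via anchored bundles and the Nash blowup. The natural proof in that framework works with limits $V=\lim_n \ker\rho_{m_n}$ in a Grassmannian bundle of $A$, taken along sequences $m_n\in M_{\mathrm{reg}}$, $m_n\to m$, of constant rank. One shows that $\mathfrak P^{-1}(m)$ is the union of the annihilators $V^\circ$. Both items then follow from $\mathrm{Sker}_m(\rho)\subseteq V\subseteq\ker\rho_m$, together with $V=\ker\rho_m$ when $m$ is regular.

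Your route avoids Grassmannians and compactness altogether. Item (1) uses only the continuity of the single fiberwise linear function $\mathrm{ev}_{\widetilde u}$ and the point-set identity $\overline{S}\cap O=\overline{S\cap O}\cap O$ for the open set $O=A^*|_{\mathcal U}$. Item (2) uses only that the image of the constant-rank morphism $\rho^*$ over $\mathcal U$ is a subbundle, hence closed in $A^*|_{\mathcal U}$. This is more elementary. It is also exactly the local, strong-kernel version of the vanishing argument for $\eta_A(\ker\rho)$ that the paper itself uses in the proof of Theorem~\ref{thm:HNcone}. What the Nash-blowup description adds is finer structure of the singular fibers: each is a union of linear subspaces $V^\circ$ with $\ker(\rho_m)^\circ\subseteq V^\circ\subseteq\mathrm{Sker}_m(\rho)^\circ$.

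Two small remarks. First, connectedness of $\mathcal U$ is not needed in item (2); constant rank suffices. Second, your density argument for non-emptiness gives more at no extra cost. Extend $\alpha\in T^*_mM$ to a $1$-form $\widetilde\alpha$ and take $m_n\to m$ in $M_{\mathrm{reg}}$. Then $\rho^*_{m_n}\widetilde\alpha(m_n)\to\rho^*_m\alpha$, so $\mathrm{Im}(\rho^*_m)\subseteq\mathfrak P^{-1}(m)$ for every $m\in M$, not just $0_m\in\mathfrak P^{-1}(m)$.
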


\begin{proposition}[\cite{Lo3}]
  The {Helffer-Nourrigat cone of $\scG$} does not depend on the choice of an anchored bundle $A\to TM$.
\end{proposition}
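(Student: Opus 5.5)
The plan is to compare the two closed subsets $\mathrm{HN}(\scG)$ built from two anchored bundles $\rho_1\colon A_1\to TM$ and $\rho_2\colon A_2\to TM$ over $\scG$, using the bundle maps $\Phi\colon A_1\to A_2$ and $\Psi\colon A_2\to A_1$ of Proposition~\ref{prop:equivalence}. These satisfy $\rho_2\circ\Phi=\rho_1$ and $\rho_1\circ\Psi=\rho_2$. Independence should mean that the dual maps $\Phi^\svee\colon A_2^*\to A_1^*$ and $\Psi^\svee\colon A_1^*\to A_2^*$ restrict to mutually inverse homeomorphisms between $\mathrm{HN}_{A_2}(\scG)$ and $\mathrm{HN}_{A_1}(\scG)$, covering the identity of $M$. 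The result would then be canonical up to this identification, in the same spirit as item (b) of Theorem~\ref{thm:main1}.

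\textbf{Step 1: the regular part.} Dualizing $\rho_2\circ\Phi=\rho_1$ gives $\Phi^\svee_m\circ\rho_{2,m}^*=\rho_{1,m}^*$ for every $m$. Hence $\Phi^\svee_m$ maps $\rho_{2,m}^*(T^*_mM)$ onto $\rho_{1,m}^*(T^*_mM)$, and symmetrically $\Psi^\svee_m$ maps $\rho_{1,m}^*(T^*_mM)$ onto $\rho_{2,m}^*(T^*_mM)$. Moreover, $\Psi^\svee\circ\Phi^\svee$ restricted to $\operatorname{Im}\rho_2^*$ is the identity, because $\Psi^\svee\Phi^\svee\rho_2^*=(\rho_2\Phi\Psi)^*=(\rho_1\Psi)^*=\rho_2^*$. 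This holds at every point $m$, not only regular ones. Note also that $M_{\mathrm{reg}}$ depends only on $\scG$, not on the chosen bundle.

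\textbf{Step 2: pass to closures.} Since $\Phi^\svee$ is continuous, $\Phi^\svee(\overline{S})\subseteq\overline{\Phi^\svee(S)}$. Applied to $S=\bigcup_{m\in M_{\mathrm{reg}}}\rho_{2,m}^*(T^*_mM)$, this gives $\Phi^\svee(\mathrm{HN}_{A_2})\subseteq\mathrm{HN}_{A_1}$, and likewise $\Psi^\svee(\mathrm{HN}_{A_1})\subseteq\mathrm{HN}_{A_2}$.

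\textbf{Step 3: the composites are the identity on the cones.} The map $\Psi^\svee\circ\Phi^\svee$ is continuous and equals the identity on $S$ by Step~1. Since the set where two continuous maps agree is closed, it equals the identity on $\overline S=\mathrm{HN}_{A_2}$. The same argument applies to $\Phi^\svee\circ\Psi^\svee$ on $\mathrm{HN}_{A_1}$. Together with Step~2, this shows the two cones are homeomorphic via fiberwise linear maps over $\mathrm{id}_M$.

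\textbf{Main obstacle: canonicity.} The maps $\Phi$ and $\Psi$ are not unique, so the remaining point is to show the identification does not depend on the choice. Any two choices $\Phi,\Phi'$ differ by a map valued in $\ker\rho_2$. Dually, $\Phi^\svee-\Phi'^\svee$ vanishes on $(\ker\rho_2)^\circ$, which contains $\operatorname{Im}\rho_2^*$. Hence $\Phi^\svee$ and $\Phi'^\svee$ agree on $S$, and by the same closure argument as in Step~3 they agree on $\mathrm{HN}_{A_2}$. I expect this closure argument to be the only subtle point. One should not try to use a description of the singular fibers, which are not annihilators in general; the closure argument is what avoids that.
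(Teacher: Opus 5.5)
Your argument is correct. The identities $\Phi^\svee\circ\rho_2^*=\rho_1^*$ and $\Psi^\svee\circ\Phi^\svee\circ\rho_2^*=\rho_2^*$ hold at every point of $M$. Continuity, together with the Hausdorff property of $A_i^*$, carries them to the closures. Your canonicity check is also right: any two lifts $\Phi,\Phi'$ agree on $\operatorname{Im}\rho_2^*$, hence on $\mathrm{HN}$. The paper gives no proof of its own here and defers to \cite{Lo3}, but your route is the same $\Phi^\svee/\Psi^\svee$ device the paper uses for item (b) of Theorem~\ref{thm:main1}. One small point: Proposition~\ref{prop:equivalence} is stated for almost Lie algebroids, but its proof uses only projectivity of the modules of sections, so it applies verbatim to bare anchored bundles.
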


Now, here is the main statement of this section.
\begin{theorem}\label{thm:HNcone}
Let $\mathfrak P\colon {\mathrm{HN}}(\scG)\to M$ be the Helffer-Nourrigat cone of a singular foliation $\scG$ associated with an anchored bundle $\rho\colon A\to TM$. Let $\mathcal I_{\mathrm{HN}(\scG)}\subset C^\infty(A^*)$ be the  ideal of smooth
functions on $A^*$ that vanish on $\mathrm{HN}(\scG)
\subset A^*$.  Then

\begin{enumerate}
\item\label{item:5.12.iii} $\cin(\mathrm{HN}(\scG)):=C^\infty(A^*)/\mathcal{I}_{\mathrm{HN}(\scG)}$ is a Poisson  $C^\infty$-ring\footnote{$\cin(\mathrm{HN}(\scG))$ corresponds to the $C^\infty$-ring of all Whitney
smooth functions on $\mathrm{HN}(\scG)\subset A^*$.} and there is a closed embedding
\[
\Spec(\cin(\mathrm{HN}(\scG)))\hookrightarrow \mathrm{Spec}(\cF(\scG))
\]
of affine Poisson $C^\infty$-schemes. Moreover,  the underlying space of  $\Spec(C^\infty(A^*)/\mathcal{I}_{\mathrm{HN}(\scG)})$ is $\mathrm{HN}(\scG)$;
\item  \label{item:union-by-smooth-manifolds} the space $\mathrm{HN}(\scG)$ is included in  $ X_{\mathcal{F}(\mathscr{G})}\subset A^*$ and is a disjoint union of symplectic manifolds.
\end{enumerate}
\end{theorem}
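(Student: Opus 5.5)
The plan is to show that $\mathrm{HN}(\scG)$ is a closed subset of $X_{\cF(\scG)}$ that is invariant under all the flows $\Phi^{\mathcal H^\nu}_t$ used in the proof of Theorem~\ref{thm:main1}. From this, both the Poisson property of $\mathcal I_{\mathrm{HN}(\scG)}$ and the leaf decomposition follow.

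\textbf{Step 1: inclusion $\mathrm{HN}(\scG)\subset X_{\cF(\scG)}$.} For $m\in M_{\mathrm{reg}}$ and $\alpha_m\in T^*_mM$, the covector $\rho'^*_m(\alpha_m)$ kills $\xi(m)$ for every $\xi\in\ker\rho$. Hence $\mathrm{ev}_\xi$ vanishes on $\bigcup_{m\in M_{\mathrm{reg}}}\rho'^*_m(T^*_mM)$. By continuity it vanishes on the closure $\mathrm{HN}(\scG)$. So $\langle\eta_A(\ker\rho)\rangle\subset\mathcal I_{\mathrm{HN}(\scG)}$. This inclusion of ideals already gives a surjection $\cF(\scG)\to \cin(\mathrm{HN}(\scG))$, and hence the closed embedding of spectra once the Poisson structure is in place.

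\textbf{Step 2: Poisson invariance.} Recall the identity $\Phi_t^{\mathcal H^a}(\mathrm{Im}\rho'^*_m)=\mathrm{Im}\rho'^*_{\Phi^X_t(m)}$ with $X=\rho'(a)$, obtained from diagram \eqref{eq:diag1} in Claim 3. The flow $\Phi^X_t$ preserves $M_{\mathrm{reg}}$, since it is an automorphism of the singular foliation. Therefore $\Phi_t^{\mathcal H^a}$ maps the dense part $\bigcup_{m\in M_{\mathrm{reg}}}\mathrm{Im}\rho'^*_m$ into itself, and by continuity it preserves $\mathrm{HN}(\scG)$. For $\mathcal H^f$, the vertical field at $\xi\in A^*_m$ equals the constant vector $-\rho'^*_m(d_mf)$. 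So its flow is translation by $t\,\rho'^*_m(d_mf)$ inside the fibre, which preserves the subspaces $\mathrm{Im}\rho'^*_m$ and hence the closure.

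It follows that for $G\in\mathcal I_{\mathrm{HN}}$ the functions $G\circ\Phi^{\mathcal H^\nu}_t$ vanish on $\mathrm{HN}$, and differentiating gives $\mathcal H^\nu[G]\in\mathcal I_{\mathrm{HN}}$. Since the vector fields $\mathcal H^\nu$ generate $\mathscr V$ and every Hamiltonian field $\{F,\cdot\}$ restricted to $X_{\cF(\scG)}$ lies in $\mathscr V$ (Lemma~\ref{lem:Pushforward-of-hamiltonian}), we get $\{F,G\}\in\mathcal I_{\mathrm{HN}}$. Thus $\mathcal I_{\mathrm{HN}}$ is a Poisson ideal. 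The induced bracket on $\cin(\mathrm{HN}(\scG))$ is a bracket on a quotient of $\cF(\scG)$, and the Jacobi identity descends since $\langle\eta_A(\ker\rho)\rangle\subset\mathcal I_{\mathrm{HN}}$.

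\textbf{Step 3: underlying space and leaves.} The underlying space of $\Spec(C^\infty(A^*)/\mathcal I)$ for a closed embedding is the zero locus of $\mathcal I$, using the $\R$-points of a quotient of $C^\infty(A^*)$. For $\mathcal I=\mathcal I_{\mathrm{HN}}$ this zero locus equals the closed set $\mathrm{HN}(\scG)$, since a closed set is the zero set of a smooth function. Finally, flow-invariance from Step 2 shows that each Nagano–Sussmann orbit $\mathcal O_\alpha$ with $\alpha\in\mathrm{HN}(\scG)$ is contained in $\mathrm{HN}(\scG)$. Hence $\mathrm{HN}(\scG)$ is a union of symplectic leaves of $X_{\cF(\scG)}$ (Theorem~\ref{thm:main1}(1)(a)).

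The main obstacle is Step 2. One point is justifying the passage from flow-invariance of a closed, possibly non-smooth set to invariance of its vanishing ideal; this is handled by differentiating $G\circ\Phi_t$ at $t=0$. The other is controlling the domain of the flows, which I would treat exactly as in Claim 1 and Lemma~\ref{lem:flow-zero-locus}.
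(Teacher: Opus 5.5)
Your proof is correct. Its outer steps coincide with the paper's. The containment $\langle\eta_A(\ker\rho)\rangle\subset\mathcal I_{\mathrm{HN}(\scG)}$ gives the surjection $\cF(\scG)\to\cin(\mathrm{HN}(\scG))$, and with it the closed embedding via Lemma~\ref{lem:A4}. Lemma~\ref{lem:A1} then identifies the underlying space.

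Where you differ is the Poisson-ideal step.
\begin{itemize}
\item The paper first asserts that $\mathrm{HN}(\scG)$ is a union of symplectic leaves, because the almost Hamiltonian flows preserve the cotangent bundles of the regular leaves. Then, for arbitrary $G$, it differentiates $F\circ\gamma=0$ along an integral curve $\gamma$ of $\mathcal H^G$ lying in a leaf $\mathcal O_\alpha\subset\mathrm{HN}(\scG)$. This uses the tangency $\operatorname{im}\pi^\#\subseteq T\mathcal O$ from Claim~2.
\item You prove invariance of $\mathrm{HN}(\scG)$ only under the flows of the generators $\mathcal H^a$ and $\mathcal H^f$, differentiate there, and reach arbitrary $\{F,\cdot\}$ by $C^\infty(A^*)$-linearity.
\end{itemize}
So in your route item~(1) needs no orbits at all, and the leaf decomposition of Theorem~\ref{thm:main1} enters only in item~(2).

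Your version also supplies what the paper's one-line justification leaves implicit. Invariance of the regular cotangent bundles only controls the dense part $\bigcup_{m\in M_{\mathrm{reg}}}\mathrm{Im}\rho'^*_m$. Passing to the closure needs exactly your three observations: $\Phi^X_t(M_{\mathrm{reg}})\subseteq M_{\mathrm{reg}}$; the $\mathcal H^f$-flow is a fibrewise translation by a vector of $\mathrm{Im}\rho'^*_m$; and flow domains are open. The paper's route is shorter once Theorem~\ref{thm:main1} is available, since it handles all Hamiltonians at once.

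Two attributions should be corrected.
\begin{itemize}
\item The fact that $\{F,\cdot\}$ is a $C^\infty(A^*)$-combination of the $\mathcal H^a,\mathcal H^f$ is not Lemma~\ref{lem:Pushforward-of-hamiltonian}, which concerns push-forwards of Hamiltonian fields by flows. The fact is elementary and holds at every point of $A^*$, not only on $X_{\cF(\scG)}$. Near $p\in A^*$, take local coordinates $x^i$ on $M$ and a local frame $e_\alpha$ of $A$, both extended globally by bump functions, and set $\xi_\alpha=\mathrm{ev}_{e_\alpha}$. Since $\{F,\cdot\}=\pi^\#(dF)$, one gets $\{F,G\}(p)=\sum_i\partial_{x^i}F(p)\,\mathcal H^{x^i}[G](p)+\sum_\alpha\partial_{\xi_\alpha}F(p)\,\mathcal H^{e_\alpha}[G](p)$, which is all you need.
\item Lemma~\ref{lem:flow-zero-locus} cannot be invoked for $\mathcal I_{\mathrm{HN}(\scG)}$, which is not known to be locally finitely generated. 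You do not need it: your flows are explicit, and the openness of flow domains in your continuity argument already settles the domain issue.
\end{itemize}
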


\begin{proof}
We have \[\mathcal I_{\mathrm{HN}(\scG)}=\{F\in C^\infty(A^*)\mid F(\xi)=0, \forall \xi\in \ker \rho_x^\circ, x\in M_{\mathrm{reg}}\}.\] For every \(a\in\ker\rho\) and every
\(\xi\in(\ker\rho_x)^\circ\) and $x\in M_{\mathrm{reg}}$,
\[
\eta_A(a)(\xi)
=\langle \xi,a(x)\rangle
=0,
\]
so every generator of the ideal
\(\langle \eta_A(\ker\rho)\rangle\)
vanishes on \(\mathrm{HN}(\scG)\). Therefore,
\[
\langle \eta_A(\ker\rho)\rangle
\subset \mathcal I_{\mathrm{HN}(\scG)}.
\]There is a surjective map $\varphi\colon \cF(\scG)\longrightarrow C^\infty(A^*)/\mathcal I_{\mathrm{HN}(\scG)}$. By Lemma \ref{lem:A4},
\[
\Spec(\varphi): \Spec(C^\infty(A^*)/\mathcal I_{\mathrm{HN}(\scG)}) \to \Spec(\cF(\scG))=(A^*, C^\infty_{A^*})\]
is a closed embedding.

Since $\mathrm{HN}(\scG)\subset A^*$ a closed subset, the underlying space of $\cin(\mathrm{HN}(\scG)=C^\infty(A^*)/\mathcal I_{\mathrm{HN}(\scG)}$ is $\mathrm{HN}(\scG)=Z_{\mathcal{I}(\mathrm{HN}(\scG))}\subset X_{\mathcal{F}(\mathscr{G})}$, see Lemma \ref{lem:A1}. An alternative way to see that is by using Theorem \ref{thm:main1} (item \ref{(4.12.iii)}):  By definition of $\mathrm{HN}(\scG)$, we have $$\mathrm{HN}(\scG)\cap A^*|_{M_{\mathrm{reg}}}=\bigsqcup_{\begin{array}{c}
L\in M/\mathscr{G}\\\mathrm{regular}
\end{array}} T^*L\overset{\text{Theorem \ref{thm:main1}}}{=}X_{\mathcal{F}(\mathscr{G})}\cap A^*|_{{M}_{\mathrm{reg}}}\subset X_{\mathcal{F}(\mathscr{G})}.$$
It follows that $\mathrm{HN}(\scG)\subset X_{\mathscr{G}(\mathcal{F})}$. Since the almost hamiltonian vector fields of $\Pb_{A^*}$ preserve the cotangent bundles of the (regular) leaves of $\mathscr{G}$, $\mathrm{HN}(\scG)$ is a disjoint union of symplectic leaves.\\

We need to show that $\mathcal I_{\mathrm{HN}(\scG)}$ is an $\Pb_{A^*}$-ideal, i.e., $\{C^\infty(A^*), I_{\mathrm{HN}(\scG)}\}_{A^*}\subset I_{\mathrm{HN}(\scG)}$. 

Let $F\in \mathcal I_{\mathrm{HN}(\scG)},G\in C^\infty(A^*)$ and $\alpha\in {\mathrm{HN}(\scG)}$, and $\mathcal{O}_\alpha$ a symplectic leaf through $\alpha$. There is a smooth path $\gamma\colon (-\epsilon, \epsilon)\to \mathcal{O}_\alpha\subset \mathrm{HN}(\scG)$ such that $\gamma(0)=\alpha$ and $\dot{\gamma(t)}=\mathcal{H}^G(\gamma(t))$ (here $\mathcal H^G=\{G\,, \cdot\}\in \mathfrak X(A^*)$). We have $0=F(\gamma(t))$ for all $t$ and \begin{align*}
    0=\dot{F(\gamma(t))}&=TF(\dot{\gamma(t)})\\&=TF(\mathcal H^G(\gamma(t)))\\&=\mathcal{H}^G[F](\gamma(t))=\{G, F\}(\gamma(t)).
\end{align*}Thus, $\{G, F\}(\alpha)=0$ and $\{G, F\}\in \mathcal I_{\mathrm{HN}(\scG)}$.
\end{proof}

\begin{remark}
    For a singular foliation $\rho(\Gamma(A))=\mathscr{G}$ on a smooth manifold $M$, we have \[\mathrm{HN}(\mathscr{G})\cap A^*|_{M_\mathrm{reg}}=X_{\mathcal{F}(\mathcal G)}\cap A^*|_{M_\mathrm{reg}}=\bigsqcup_{m\in M_{\mathrm{reg}}}\mathrm{Im}(\rho^*_m).\]
In general, the fibers $\mathrm{HN}(\mathscr{G})\subset A^*$ are not vector spaces but cones, in contrast to the fibers of $X_{\mathcal{F}(\mathscr{G})}\subset A^*$, which are vector spaces of possibly varying dimensions. Consequently, the inclusion $\mathrm{HN}(\mathscr{G})\subset X_{\mathcal{F}(\mathscr{G})}$ is strict in general. See the example below.
\end{remark}

\begin{example} \label{ex:comparison}We return to Examples \ref{ex0}, \ref{ex1},  \ref{ex2} and \ref{ex:vanishing_order2}. Let $\mathscr{G}\subset \mathfrak X(M)$ be a projective singular foliation (Examples \ref{ex0}, \ref{ex1}) so that there is a vector bundle $A$ with $\Gamma(A)\simeq \mathscr{G}$. The Helffer-Nourrigat cone $\mathrm{HN}(\mathscr{G})$ is equal to $A^*$, and it  coincides with the underlying space $X_{\mathcal{F}(\mathscr{G})}$ of the Poisson $C^\infty$-scheme associated with $\mathscr{G}$. If $\mathscr{G}$ is the singular foliation given by the action of $\mathfrak{so}(3)$ on $\mathbb{R}^3$ as in Example   \ref{ex2}, then \begin{align*}
    \mathrm{HN}(\mathscr{G})&=\bigr(\{0\}\times\{\ell^\perp, \ell \in \mathbb{P}^2\} \bigr)\bigsqcup_{(x, y, z)\neq 0}T_{\frac{(x, y, z)}{\lVert (x,y,z) \rVert}}^*S^2\\&=(\{0\}\times \mathbb R^3) \bigsqcup_{(x, y, z)\neq 0}T_{\frac{(x, y, z)}{\lVert (x,y,z) \rVert}}^*S^2=X_{\mathcal{F}(\mathscr{G})}\subset \mathbb{R}^3\times \mathbb{R}^3.
\end{align*}

However, if $\mathscr{G}$ is the singular foliation generated by vector fields on $\mathbb{R}^2$ that vanish to second order as in Example \ref{ex:vanishing_order2}, there is a strict inclusion $\mathrm{HN}(\mathscr{G})\subsetneq X_{\mathcal{F}(\mathscr{G})}$ because the fiber of $X_{\mathcal{F}(\mathscr{G})}$ over $(0,0)$ is the whole space $A^*_{(0,0)}\simeq \mathbb{R}^6$, but the fiber of $\mathrm{HN}(\mathscr{G})$ over $(0,0)$ is \[\{(a, b)\in \mathbb{R}^3\times \mathbb{R}^3\mid \exists \text{ a line}\; L\subset C\; \text{with}\; a, b\in L\}\subsetneq \mathbb{R}^6\]where $C$ is the cone in $\mathbb{R}^3$ given by $uw=v^2$, where $u, v$ and $w$ are coordinates on $\mathfrak {so}(3)^*\simeq \mathbb{R}^3$. See \cite{LLL1, Lo3}.
\end{example}

\appendix
\section{Some technical lemmas on $\mathrm{Spec}(\mathscr{A})$}\label{appen:technical-lem}

Recall that Dubuc's spectrum functor $\Spec$ assigns to a $\cin$-ring $\scA$ a topological space $X_\scA$ together with a sheaf $\scO_\scA$ of local $\cin$-rings.  That is,
\[
\Spec(\scA) = (X_\scA, \scO_\scA).
\]
The 
underlying topological space $X_\scA$ is the set of $\R$-points of $\scA$.  That is 
\[
X_\scA: =\{ p:\scA\to \R \mid p \textrm{ is a map of $\cin$-rings }\}.
\]
A basis for the topology on $X_\scA$ is the collection of sets $\{W_a\}_{a\in \scA}$ where 
\[
W_a:= \{ p\in X_\scA \mid p(a) \not= 0\}
\]
Moreover for any map $f:\scB\to \scA$ of $\cin$-rings the induced map $f^*: X_\scA \to X_\scB$ is continuous.   See \cite{Joy} for details.   
\begin{lemma} \label{lem:A1}
Let $M$ be a second countable Hausdorff manifold, $S\subseteq M$ a closed subset, $I_S$ is the corresponding vanishing ideal
\[
I(S):= \{f\in \cin(M) \mid f|_S =0\}
\]    
and $\scA = \cin(M)/I(S) \equiv \cin(M)|_S$ the corresponding quotient $\cin$-ring, ``the ring of smooth functions on $S$."   Then the space $X_\scA$ of $\R$-points of $\scA$ is homeomorphic to the set $S$ with its subspace topology:
\begin{equation}
    X_\scA \simeq Z_J.
\end{equation}
\end{lemma}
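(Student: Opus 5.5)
The plan is to identify the $\R$-points of $\scA=\cin(M)/I(S)$ with the $\R$-points of $\cin(M)$ that kill $I(S)$. Then we use Milnor's exercise (Remark~\ref{rmrk:Pursell}) to turn these into points of $M$. Here the target $Z_J$ is read as $S=Z_{I(S)}$, the zero locus of the ideal.

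\textbf{Bijection.} Any $\R$-point $q\colon\scA\to\R$ composed with the quotient map $\pi\colon\cin(M)\to\scA$ gives an $\R$-algebra map $q\circ\pi\colon\cin(M)\to\R$. By Remark~\ref{rmrk:Pursell} this is $\ev_p$ for a unique $p\in M$. Since $q\circ\pi$ vanishes on $I(S)$, we get $f(p)=0$ for all $f\in I(S)$, so $p\in Z_{I(S)}$. Next we show $Z_{I(S)}=S$. Clearly $S\subseteq Z_{I(S)}$. Conversely, if $p\notin S$, closedness of $S$ gives a bump function $f$ supported in $M\setminus S$ with $f(p)=1$; then $f\in I(S)$ and $p\notin Z_{I(S)}$. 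In the other direction, each $p\in S$ gives $\ev_p$, which kills $I(S)$ and so descends to $\scA$. It is a $\cin$-ring map because the $\cin$-operations on the quotient are induced from those on $\cin(M)$. These two assignments are mutually inverse, which yields a bijection $X_\scA\to S$.

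\textbf{Topology.} A basic open set of $X_\scA$ is $W_{\pi(f)}$, and it corresponds to $\{p\in S: f(p)\neq 0\}=S\cap f^{-1}(\R\setminus 0)$, which is open in $S$. Conversely, take $U\cap S$ with $U\subseteq M$ open and a point $p\in U\cap S$. A bump function $f$ with $f(p)\neq 0$ and $\operatorname{supp} f\subset U$ gives $p\in S\cap\{f\neq0\}\subseteq U\cap S$. So the two topologies share a basis, and the bijection is a homeomorphism.

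The only nontrivial input is Pursell--Milnor, which is already available from Remark~\ref{rmrk:Pursell}. The rest is bump-function arguments, so I expect no real obstacle beyond keeping track of the fact that $\R$-algebra maps and $\cin$-ring maps agree.
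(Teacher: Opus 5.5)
Your proof is correct, and it uses the same ingredients as the paper, organized differently.

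The paper deduces the statement from Lemma~\ref{lem:A2}, which holds for an arbitrary ideal $J$. That lemma is proved by combining two results:
\begin{itemize}
\item Lemma~\ref{lem:A4}: a surjection of $\cin$-rings induces a closed embedding of the spaces of $\R$-points, with openness checked on the basic sets $W_a$.
\item Lemma~\ref{lem:A3}: the evaluation map $M\to X_{\cin(M)}$ is a homeomorphism, which the paper cites from the literature rather than proves.
\end{itemize}
On top of this, the paper only adds the bump-function proof of $S=Z_{I(S)}$, which you reproduce.

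You instead work in one pass. Milnor's exercise gives the bijection, and your basis comparison $W_{\pi(f)}\leftrightarrow S\cap\{f\neq 0\}$, together with bump functions, handles the topology. In doing so you prove the topological content of Lemma~\ref{lem:A3} directly instead of citing it.

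Nothing is lost in generality: your argument works verbatim with $S$ replaced by $Z_J$ for any ideal $J$. What the paper's modular route buys is reusable statements, notably Lemma~\ref{lem:A4}, which is used again for the Helffer--Nourrigat cone. It also records explicitly that the embedding is closed.
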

We'll see shortly that Lemma~\ref{lem:A1} is a corollary of Lemma~\ref{lem:A2} below.
\begin{lemma}\label{lem:A2}
Let $M$ be a second countable Hausdorff manifold.  For any ideal $J$ in the $\cin$-ring $\cin(M)$ of smooth functions on $M$ the space $X_{\cin(M)/J}$ of $\R$-points of the ring $\scA: =\cin(M)/J$ is homeomorphic to the closed set
\[
Z_J =\{x\in M \mid j(x) = 0 \textrm{ for all }j\in J\}
\]
with its subspace topology.
\end{lemma}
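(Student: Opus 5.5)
The plan is to identify the $\R$-points of $\scA=\cin(M)/J$ with the $\R$-points of $\cin(M)$ that kill $J$, and then compare topologies. Precomposition with the quotient map $q\colon\cin(M)\to\scA$ gives an injection $q^*\colon X_\scA\hookrightarrow X_{\cin(M)}$. Its image consists of those $\R$-points $p$ of $\cin(M)$ with $p(J)=0$: a map of $\cin$-rings $\scA\to\R$ is the same thing as a map $\cin(M)\to\R$ vanishing on $J$, because the $\cin$-structure on the quotient is the induced one. By Remark~\ref{rmrk:Pursell} (Milnor's exercise), every $\R$-point of $\cin(M)$ is evaluation $\ev_x$ at a unique $x\in M$. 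Hence the image of $q^*$ is $\{\ev_x \mid j(x)=0 \ \forall j\in J\}$, which is exactly $Z_J$. This gives a bijection $\theta\colon X_\scA\to Z_J$ with $\theta(p)=x$ whenever $p\circ q=\ev_x$. The set $Z_J$ is closed, being an intersection of zero sets of continuous functions.

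Next I will check that $\theta$ is a homeomorphism.

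\textbf{Continuity of $\theta^{-1}$.} A basic open set of $X_\scA$ is $W_{q(f)}=\{p\mid p(q(f))\neq0\}$ with $f\in\cin(M)$; every element of $\scA$ has this form since $q$ is surjective. Under $\theta$ this set corresponds to $\{x\in Z_J\mid f(x)\neq 0\}=Z_J\cap f^{-1}(\R\setminus\{0\})$, which is open in the subspace topology.

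\textbf{Continuity of $\theta$.} This is the step needing care. Given an open $U\subseteq M$ and a point $x\in Z_J\cap U$, I will choose a bump function $f\in\cin(M)$ with $f(x)\neq0$ and $\operatorname{supp} f\subset U$; this uses that $M$ is Hausdorff and second countable, so smooth bump functions exist. Then $\theta^{-1}(Z_J\cap U)$ contains the basic open set $W_{q(f)}$, which contains $\theta^{-1}(x)$. So $\theta^{-1}(Z_J\cap U)$ is a union of basic open sets, and hence open.

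Together these show $X_\scA\simeq Z_J$.

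For Lemma~\ref{lem:A1}, I take $J=I(S)$. It then remains to check $Z_{I(S)}=S$ when $S$ is closed. Clearly $S\subseteq Z_{I(S)}$. For the converse, take $x\notin S$; since $S$ is closed, $M\setminus S$ is an open neighbourhood of $x$, and a bump function supported in $M\setminus S$ with $f(x)=1$ lies in $I(S)$ but does not vanish at $x$. So $x\notin Z_{I(S)}$.

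The main obstacle is the surjectivity onto evaluations, which is Milnor's exercise. It is already available via Remark~\ref{rmrk:Pursell}, so the remaining work is routine.
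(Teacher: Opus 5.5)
Your proof is correct and follows essentially the same route as the paper's. You identify $X_\scA$ with the $\R$-points of $\cin(M)$ that kill $J$, use Milnor's exercise to land in $Z_J$, and compare topologies through the basic open sets $W_a$. The only difference is that you inline the two auxiliary results the paper invokes: your computation $\theta(W_{q(f)})=Z_J\cap\{f\neq 0\}$ is the openness argument of Lemma~\ref{lem:A4}, and your bump-function argument for the continuity of $\theta$ is the topological content of Lemma~\ref{lem:A3}, which the paper cites rather than proves.
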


\begin{proof}[Proof that Lemma~\ref{lem:A2} implies Lemma~\ref{lem:A1}]  
We argue that 
\begin{equation} \label{eq:A.2.1}
    S = Z_{I(S)}.
\end{equation}
If $q\in S$ and $f\in I(S)$ then $f(q) =0$.  Hence $S\subseteq Z_{I(S)}$.   If $q\not\in S$ then, since $S$ is closed, there is a smooth function $h\in \cin(M)$ so that $h(q) =1$ and $h|_S$ is identically zero.  Thus $q\not \in Z_{I(S)}$. Consequently $M\smallsetminus S \subseteq M\smallsetminus Z_{I(S)}$.
We conclude that \eqref{eq:A.2.1} holds. 
\end{proof}

Before proceeding with our proof of Lemma~\ref{lem:A2} we recall a theorem of Pursell \cite{Pursell}, which is better known as ``Milnor's exercise" and one more result.
\begin{lemma} \label{lem:A3}
Let $M$ be a second countable Hausdorff manifold.  Then the map
\[
\ev: M \to X_{\cin(M)},\qquad x \mapsto \ev_x,
\]
where $\ev_x: \cin(M)\to \R$ is the evaluation map: $
\ev_x(f) = f(x)$
for all $f\in \cin(M)$, is a homeomorphism.
\end{lemma}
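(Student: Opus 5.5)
The map $\ev$ is a bijection by Milnor's exercise (Remark~\ref{rmrk:Pursell}). Every $\R$-algebra map $\cin(M)\to\R$ is a $\cin$-ring map, and every such map is evaluation at a unique point. Injectivity is immediate: two distinct points of a Hausdorff manifold are separated by a smooth bump function. The remaining work is to show that $\ev$ and its inverse are continuous for the topology on $X_{\cin(M)}$ generated by the basic open sets $W_f=\{p\mid p(f)\neq 0\}$, $f\in\cin(M)$.

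For continuity of $\ev$, I will observe that
\[
\ev^{-1}(W_f)=\{x\in M\mid f(x)\neq 0\}.
\]
This is open in $M$ because $f$ is continuous. Since the sets $W_f$ form a basis, $\ev$ is continuous.

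For continuity of $\ev^{-1}$, equivalently openness of $\ev$, I take an open set $U\subseteq M$ and a point $x\in U$. I will choose a smooth bump function $h$ with $h(x)=1$ and $\operatorname{supp}h\subseteq U$; such a function exists on any manifold. Then
\[
\ev(x)\in W_h \quad\text{and}\quad \ev^{-1}(W_h)=\{h\neq 0\}\subseteq U.
\]
Since $\ev$ is bijective, this gives $W_h\subseteq \ev(U)$, so $\ev(U)$ is a union of basic open sets and hence open.

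The only nontrivial ingredient is surjectivity, i.e.\ Milnor's exercise, and I plan to cite it rather than reprove it. If a proof is wanted, I would use second countability to choose a proper function $g\ge 0$, and argue by contradiction with a character $\varphi$. Suppose that for every $x$ there is $f_x$ with $\varphi(f_x)=0$ and $f_x(x)\neq 0$. I would then build $F=\lambda g+\sum_i f_{x_i}^2$ over finitely many points covering the compact sublevel set $\{g\le \varphi(g)+1\}$, with $\lambda$ chosen so that $F-\varphi(F)$ is nowhere zero. This makes $F-\varphi(F)$ invertible while lying in $\ker\varphi$, which is a contradiction. Calibrating this sum so that the argument closes is the one delicate point of the whole proof.
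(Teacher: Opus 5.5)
Your proof is correct. The paper gives no argument of its own for this lemma; it simply cites \cite[Theorem~2.1]{NGSS}. What you supply is the standard self-contained proof, in three steps:
\begin{itemize}
\item bijectivity of $\ev$ from Milnor's exercise, which the paper already invokes in Remark~\ref{rmrk:Pursell};
\item continuity of $\ev$ from the identity $\ev^{-1}(W_f)=\{f\neq 0\}$;
\item openness of $\ev$ from bump functions: the sets $W_h$ with $\operatorname{supp} h\subseteq U$ form a neighborhood basis of $\ev(x)$ inside $\ev(U)$.
\end{itemize}

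Compared with the citation, your version makes the structure visible. The only deep input is Pursell's theorem, and that is the only place second countability is used, through the proper function $g$. Matching the two topologies is elementary and needs only bump functions.

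Your sketch of Milnor's exercise also closes as written. Let $c>0$ be the minimum of $\sum_i f_{x_i}^2$ on the compact set $K$. Since each $\varphi(f_{x_i})=0$, you have $\varphi(F)=\lambda\varphi(g)$, and hence
\[
F-\varphi(F)=\lambda\bigl(g-\varphi(g)\bigr)+\textstyle\sum_i f_{x_i}^2 .
\]
This is $>\lambda$ off $K$ and $\geq c-\lambda\max(\varphi(g),0)$ on $K$, because $g\geq 0$. So any sufficiently small $\lambda>0$ makes $F-\varphi(F)$ everywhere positive, hence invertible. Since $\varphi(1)=1$, an invertible element cannot lie in $\ker\varphi$, which gives the contradiction.

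Once you have a point $x$ at which every element of $\ker\varphi$ vanishes, apply this to $f-\varphi(f)\cdot 1\in\ker\varphi$. This gives $\varphi(f)=f(x)$ for all $f$, so $\varphi=\ev_x$.
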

\begin{proof}
    See \cite[Theorem~2.1]{NGSS}.
\end{proof}
The following result is Lemma~5.5 from \cite{L-Poisson}.  We reproduce its proof for the reader's convenience.
\begin{lemma} \label{lem:A4}
Let $\varpi:\scB \to \scA$ be a {\em surjective } map of $\cin$-rings.
Then the induced map on spaces $
\varpi^*:X_\scA \to X_\scB$, $\varpi^*p:= p\circ \varpi$ is a closed embedding.
\end{lemma}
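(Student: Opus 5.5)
The plan is to work directly with Dubuc's description of $X_\scA$ recalled above: points are $\R$-points and the topology has basis $\{W_a\}_{a\in\scA}$. There are three things to show: that $\varpi^*$ is injective, that it is a homeomorphism onto its image, and that the image is closed in $X_\scB$.

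\emph{Well-definedness and injectivity.} For $p\in X_\scA$, the composite $p\circ\varpi$ is a map of $\cin$-rings, so $\varpi^*$ lands in $X_\scB$. If $p\circ\varpi=q\circ\varpi$, then $p=q$ because $\varpi$ is surjective. Continuity of $\varpi^*$ is the general functoriality already recalled.

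\emph{Identification of the image.} Let $J=\ker\varpi$, so that $\scA\cong\scB/J$ as $\cin$-rings. We will show that
\[
\varpi^*(X_\scA)=\{q\in X_\scB\mid q(J)=0\}.
\]
One inclusion is immediate. For the other, let $q$ vanish on $J$. Then $q$ factors through $\scB/J$ as an $\R$-algebra map $\bar q$, and $\bar q$ is a $\cin$-ring map because the $\cin$-operations on the quotient are induced from those on $\scB$. Alternatively, this follows from Remark~\ref{rmrk:Pursell}, since $\R$-algebra maps to $\R$ are automatically $\cin$-ring maps.

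Closedness of the image then follows from
\[
\{q\mid q(J)=0\}=\bigcap_{j\in J}\bigl(X_\scB\smallsetminus W_j\bigr),
\]
which is an intersection of complements of basic open sets.

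\emph{Embedding.} It remains to see that $\varpi^*$ is open onto its image. For $a\in\scA$, choose $b\in\scB$ with $\varpi(b)=a$. Then $p(a)=(p\circ\varpi)(b)$, so
\[
\varpi^*(W_a)=W_b\cap\varpi^*(X_\scA).
\]
Because $\varpi^*$ is injective, it carries unions of basic opens to unions, so images of open sets are relatively open. Hence $\varpi^*$ is a homeomorphism onto a closed subset, which is what we want.

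The only step needing any care is the factorization of $q$ through the quotient as a $\cin$-ring map. This is handled by the quotient lemma (Hadamard), or more directly by Remark~\ref{rmrk:Pursell}. Everything else is formal.
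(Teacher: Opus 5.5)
Your proposal is correct and follows the paper's proof essentially step for step: you identify the image as $\{q\in X_\scB \mid q(J)=0\}$ via the first isomorphism theorem, get injectivity from surjectivity of $\varpi$, get closedness from the image being an intersection of zero sets, and get openness onto the image from $\varpi^*(W_a)=W_b\cap\varpi^*(X_\scA)$ for a lift $b$ of $a$. Your justification that $q$ descends to a $\cin$-ring map on $\scB/J$ is a useful detail the paper leaves implicit; one small correction is that images commute with unions for any map, so injectivity is not used at that step, only in concluding that the bijection onto the image is a homeomorphism.
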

\begin{proof}

For an ideal $I\subset \scB$ define the set of its zeros to be
\[
Z_I:= \{y\in X_\scB \mid y (f) = 0 \textrm{ for all } f\in I\}.
\]
Since \[
Z_I = \bigcap _{f\in I} \{x\in \scB \mid x(f) =0\}, 
\]
the set $Z_I$ is closed in
the set of points $X_\scB$.
For any map of $\cin$-rings $\varphi:\scB\to
\scA$ the induced map $\varphi^*:X_\scA \to X_\scB$, $\varphi^*p  = p\circ \varphi$, is
continuous (this is part of the construction of the functor $\Spec$ on morphisms, see \cite{Joy}).

Let $J:= \ker \varpi$. By the first isomorphism theorem
\[
\varpi^*\big( X_\scA \big)= \{y\in X_\scB \mid y|_J = 0\},
\]  
which is $Z_J$.  Thus $\varpi^*$ maps $X_\scA$ continuously onto $Z_J$.
Since $\varpi$ is onto, $\varpi^*$ is injective. To prove that
$\varpi^*:X_\scA\to Z_J$ is a homeomorphism, it's enough to prove that
$\varpi^*$ is an open map.  A basic open set in $X_\scA$ is of the form
$\{a \not = 0\}: = \{ p\in X_\scA \mid p(a) \not = 0\}$.   Since $\varpi$
is onto, there is $b\in \scB$ such that $\varpi(b) = a$.     Then
\[
p (\varpi(b)) \not = 0 \quad \Leftrightarrow \quad (p\circ \varpi) (b) \not =0.
\]
Consequently
\[
\varpi^*\, (\{ a \not = 0\}) = \{ y \in X_\scB \mid  y = \varpi^* p \,
\textrm{ and } \,  p(a) \not = 0\} = Z_J \cap \{ b \not =0\}, 
\]
and we are done.
\end{proof}

\begin{proof}[Proof of Lemma~\ref{lem:A2}] 
Lemma~\ref{lem:A4} applied to $\scB= \cin(M)$ and the quotient map $\varpi: \scB = \cin(M)\to \cin(M)/J = \scA$ implies that 
\[
\varpi^*:X_\scA \to X_{\cin(M)}
\]
is a closed embedding.  Since $\ev\inv: X_{\cin(M)}\to M$ is a homeomorphism the composite map
\[
\Psi: = \ev\inv \circ \varpi^*: X_\scA \to M
\]
is a closed embedding.    It remains to check that $\Psi(X_\scA) = Z_J$.  If $p\in X_\scA$ then 
\[
(\varpi^*p)(j) = p(\varpi(j)) = p(0) = 0
\]
for all $j\in J$.  Hence if $z\in \ev\inv (\varpi^*(X_\scA))$ then $j(z) = \ev_z(j) = 0$ for all $j\in J$.  Thus $\Psi(X_\scA) \subseteq Z_J$.  

Conversely if $z\in Z_J$ then $\ev_z(j) = 0$ for all $j\in J$. Hence the map $\ev_z:\cin(M)\to \R$ descends to a unique map $p:\cin(M)/J \to \R$ with $p\circ \varpi = \ev_z$.  Hence $\Psi(X_\scA) = Z_J$.
\end{proof}

\section{Nagano-Sussmann orbit theorem and zero locus}


The following theorem is crucial for discussing leaves in singular spaces.
\begin{theorem}[Nagano-Sussmann \cite{Nag}]\label{thm:Nagano–Sussman} Let $\mathcal{V}\subseteq \mathfrak X(N)$ be a locally finitely generated $C^\infty(N)$-submodule of vector fields on a manifold $N$. For every $\ell\in N$, the set\footnote{We implicitly assume that the flows are defined.}
       $$\mathcal{O}_\ell:=\left\{\phi_{t_1}^{Z_1}\circ \phi_{t_2}^{Z_2}\circ \cdots \circ \phi_{t_n}^{Z_n}(\ell),\; t_1, \ldots, t_n\in \mathbb R,\; Z_1,\ldots, Z_n\in \mathcal{V},\;n\in \mathbb N\right\}$$
is a connected immersed submanifold of $N$. The sets $\mathcal{O}_\ell, \;\ell\in N$, form a partition of $N$.
\end{theorem}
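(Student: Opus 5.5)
The plan is to follow Sussmann's proof of the orbit theorem. Local finite generation of $\mathcal V$ only serves to make flows and spans well behaved locally.

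\textbf{Orbits form a partition.} Write $x\sim y$ if $y=\phi_{t_1}^{Z_1}\circ\cdots\circ\phi_{t_n}^{Z_n}(x)$ for some $Z_i\in\mathcal V$ and some $t_i$, with the flows defined. This is an equivalence relation:
\begin{itemize}
\item the empty composition gives reflexivity;
\item the inverse of $\phi^{Z}_{t}$ is $\phi^{Z}_{-t}$, which gives symmetry;
\item concatenating compositions gives transitivity.
\end{itemize}
The sets $\mathcal O_\ell$ are exactly the equivalence classes, so they partition $N$. Each $\mathcal O_\ell$ is path connected, since every point of it is joined to $\ell$ by a piecewise integral curve.

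\textbf{The invariant distribution.} For $x\in N$, let $\Delta_x\subset T_xN$ be the span of all vectors $(T\phi)(Z(\phi^{-1}(x)))$, where $Z\in\mathcal V$ and $\phi$ ranges over finite compositions of flows of elements of $\mathcal V$ defined near $\phi^{-1}(x)$.

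By construction $\Delta$ is invariant: $T_x\phi(\Delta_x)=\Delta_{\phi(x)}$ for every such $\phi$. Hence $\dim\Delta$ is constant along each $\mathcal O_\ell$.

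Fix $x$ and $k=\dim\Delta_x$. Choose pushed-forward vector fields $Y_1,\dots,Y_k$ (each of the form $\phi_*Z$) with $Y_1(x),\dots,Y_k(x)$ a basis of $\Delta_x$. Define
\[
\Theta_x(t_1,\ldots,t_k)=\phi^{Y_1}_{t_1}\circ\cdots\circ\phi^{Y_k}_{t_k}(x).
\]
Since the flow of $\phi_*Z$ is $\phi\circ\phi^Z_t\circ\phi^{-1}$, the image of $\Theta_x$ lies in $\mathcal O_x$. Its differential at $0$ is injective, so on a small ball $\Theta_x$ is an injective immersion. Moreover, by invariance of $\Delta$ and the constancy of $\dim\Delta$ along the orbit, its differential has image $\Delta$ at every point.

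\textbf{Gluing the charts.} Give $\mathcal O_\ell$ the topology generated by the images of all such maps $\Theta_y$, $y\in\mathcal O_\ell$. The key step, and the main obstacle, is showing that the transition maps $\Theta_y^{-1}\circ\Theta_{y'}$ are smooth on overlaps.

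My approach: all the plaques are integral manifolds of the same constant-rank family $\Delta|_{\mathcal O_\ell}$. A $k$-dimensional integral manifold through a point $p$ can then be written locally as a graph over $\Delta_p$ in a slice chart of $N$. Two such plaques passing through a common point $p$ must coincide near $p$, since both are unions of flow lines of the $Y_i$, each tangent to $\Delta$. This gives smooth transitions and an atlas making $\mathcal O_\ell$ an immersed $k$-dimensional submanifold. Its tangent space is $\Delta$, and the inclusion is smooth because the $\Theta_y$ are smooth into $N$.

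Second countability follows because $\mathcal O_\ell$ is connected and is covered by plaques that are open in the leaf topology. Here I would invoke the standard argument that a connected manifold which is an integral manifold of a distribution on a second countable $N$ is second countable (the Chevalley/Sussmann argument). If this proves delicate, I would fall back on Sussmann's published argument essentially verbatim.

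Finally, observe that in the applications in the paper $\mathcal V$ is preserved by its own flows (Lemma~\ref{lem:Pushforward-of-hamiltonian}), so there $\Delta_x=\mathcal V(x)$.
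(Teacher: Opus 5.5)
The paper gives no proof of this statement; it simply quotes it from the literature, and your outline is the standard proof of Sussmann's orbit theorem that the citation stands for. Your argument is correct: the flow-invariant distribution $\Delta$ has constant rank along orbits, charts are built from compositions of flows of pushed-forward fields, and the gluing works because $T_qP=\Delta_q$ on every plaque $P$ forces each $\Delta$-valued field to be tangent to $P$, so the plaque centred at $p$ is an open subset of every plaque through $p$ (make that tangency mechanism, rather than ``unions of flow lines'', the stated justification). Only minor tightening is needed: check connectedness in the leaf topology rather than in the topology of $N$ (it follows from the same tangency), note that local finite generation plays no role at all, and restrict your closing identity $\Delta_x=\mathcal V(x)$ in the paper's application to points of $Z(\mathcal I)$, since Lemma~\ref{lem:Pushforward-of-hamiltonian} controls push-forwards only there.
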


We conclude the section with two technical lemmas
\begin{lemma}\label{lem:flow-zero-locus}Let $M$ be a smooth manifold and $\mathcal{I}\subset C^\infty(M)$ be a locally finitely generated ideal. Let $X\in\mathfrak X(M)$ such that $X[\mathcal{I}]\subset \mathcal{I}$. Then any integral curve $\gamma \colon J \to M$ of $X$ starting at $p\in Z_{\mathcal{I}}$ is in $Z_\mathcal{I}$ for all $t\in J$.
\end{lemma}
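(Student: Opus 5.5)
Let $j_1,\dots,j_k\in\mathcal I$ be generators of $\mathcal I$ on a neighbourhood $U$ of a point of the curve. The plan is to reduce the statement to uniqueness of solutions of a linear ODE along $\gamma$. Since $X[\mathcal I]\subset\mathcal I$ and $j_1,\dots,j_k$ generate $\mathcal I$ on $U$, there are smooth functions $c_{il}\in C^\infty(U)$ with
\[
X[j_i]=\sum_{l=1}^k c_{il}\, j_l \quad\text{on } U,\qquad i=1,\dots,k.
\]
Here we use the locality of vector fields: $X[j_i]|_U$ depends only on $j_i|_U$, so $X[j_i]|_U$ lies in the local ideal $\mathcal I|_U$, which is generated by the $j_l|_U$.

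\textbf{Local step.} Suppose $\gamma(t)\in U$ for $t$ in a subinterval $J_0\ni t_0$. Set $u(t)=(j_1(\gamma(t)),\dots,j_k(\gamma(t)))$. Then
\[
\dot u_i(t)=X[j_i](\gamma(t))=\sum_l c_{il}(\gamma(t))\,u_l(t),
\]
which is a linear ODE with continuous coefficients. If $\gamma(t_0)\in Z_{\mathcal I}$, then $u(t_0)=0$, and uniqueness forces $u\equiv 0$ on $J_0$. Every element of $\mathcal I$ is, on $U$, a $C^\infty(U)$-combination of the $j_l$, so it vanishes at $\gamma(t)$. Hence $\gamma(t)\in Z_{\mathcal I}$ for all $t\in J_0$.

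\textbf{Globalisation.} Let $T=\{t\in J \mid \gamma(t)\in Z_{\mathcal I}\}$. This set is closed in $J$, because $Z_{\mathcal I}$ is closed and $\gamma$ is continuous. It is nonempty, since it contains the starting time. By the local step it is open: around any $t_0\in T$, choose a generating neighbourhood $U$ of $\gamma(t_0)$ and a small interval mapped into $U$. Since $J$ is connected, $T=J$.

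\textbf{Main obstacle.} The main subtle point is producing the coefficients $c_{il}$. This needs the local finite generation to hold in the strong sense (the generators restrict to generators on the open set), together with the fact that membership in $\mathcal I$ restricts to membership in the localized ideal. Without finite generation, the ODE argument breaks down, because there is no finite-dimensional linear system to which uniqueness can be applied.
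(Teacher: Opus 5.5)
Your proposal is correct and follows the paper's proof essentially step by step: local generators of $\mathcal I$ give the linear system $\frac{d}{dt}f_i(\gamma(t))=\sum_j c_{ij}(\gamma(t))\,f_j(\gamma(t))$, uniqueness for linear ODEs forces this to vanish near any time at which $\gamma$ lies in $Z_{\mathcal I}$, and an open--closed argument on the connected interval $J$ concludes. The only cosmetic difference is that you apply the open--closed argument to the whole preimage $\gamma^{-1}(Z_{\mathcal I})$ rather than to its connected component containing $0$; this is equivalent and slightly cleaner.
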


\begin{proof}
Let $\gamma\colon J=(-\epsilon,\epsilon) \to M$ be an integral curve of $X$ with $\gamma(0)\in Z_{\mathcal{I}}$, and let $J'$
be the connected component containing $0$ of the closed subset
\(
\gamma^{-1}(Z_{\mathcal I})\subset J.
\)
We show that $J'=J$.

$J'$ is non empty (it contains $0$) and is closed in $J'$ by definition. Let us check that $J'$ is open in $J$. Let $t_0\in J'$ and
set $q=\gamma(t_0)\in Z_{\mathcal I}\). Since $\mathcal I$ is locally finitely generated, there exists an open neighborhood \(\mathcal U\) of \(q\)
such that
\[
\mathcal I|_{\mathcal U}=(f_1,\ldots,f_r)
\]
for some \(f_1,\ldots,f_r\in C^\infty(\mathcal U)\).
Moreover, the assumption \(X[\mathcal I]\subset\mathcal I\) implies
that there exist smooth
functions \(c_{ij}\in C^\infty(\mathcal U)\) such that
\[
Xf_i=\sum_{j=1}^r c_{ij}f_j,
\qquad i=1,\ldots,r.
\] 
By continuity of \(\gamma\), there is an open interval
\(K\subset J\), containing \(t_0\), such that
\(\gamma(K)\subset\mathcal U\). Consider the vector-valued function \[t\in K\mapsto(f_1(\gamma(t)),\ldots, f_r(\gamma(t))).\]
For every $i\in\{1,\ldots, r\}$, \[
\frac{d}{dt}f_i(\gamma(t))
=Tf_i(X(\gamma(t)))=(Xf_i)(\gamma(t))
=\sum_{j=1}^r c_{ij}(\gamma(t))\,f_j(\gamma(t)).
\]
Hence, $V(t)=(f_1(\gamma(t)),\ldots, f_r(\gamma(t)))$ is the solution of the linear ordinary differential equations \begin{equation}\label{eq:ODE}
F'(t)=\mathfrak M(t)F(t)\end{equation} where $\mathfrak M(t)=(c_{ij}(\gamma(t)))$ is a $r\times r$ matrix. Since $q=\gamma(t_0)\in Z_{\mathcal I}$, we have $V(t_0)=0=V'(t_0)$. By uniqueness, the
solution of \eqref{eq:ODE} with $F(t_0)=0=F'(t_0)$ is identically zero. Hence we have $V(t)=(f_1(\gamma(t)),\ldots, f_r(\gamma(t)))=0$ for all $t\in K$. Since $f|_\mathcal{U}$ is a $C^\infty(\mathcal U)$-linear combination of the $f_i$'s, it implies that $f(\gamma(t))=0$. Therefore, $\gamma(t)\in Z_{\mathcal{I}}$ for all $t\in K$. It follows that $K\subset \gamma^{-1}(Z_\mathcal{I})$. Since \(K\) is connected and contains \(t_0\in J'\), it is contained in the connected component \(J'\). Thus \(J'\) is open in \(J\).  Therefore, \(J'\) is nonempty, open, and closed in \(J\). Since \(J\) is an interval, hence connected, we conclude that $J'=J$. Consequently, \[ \gamma(t)\in Z_{\mathcal I} \] for all \(t\in J\), as desired.

\end{proof}

\begin{remark}
    It is not true in general, for $X$ a smooth vector field and $\mathcal I$ an ideal of smooth functions that  $X[\mathcal I] \subset \mathcal I$ implies $(\Phi_t^X)^* [\mathcal I] \subset \mathcal I $. For instance, take $ M= \mathbb R$, $\mathcal I $ to be the ideal of functions vanishing on $\mathbb R_- $ and $X={\partial_x }$. Here $Z(\mathcal{I})=(\infty, 0]$ and for $t>0$,  $\Phi^{{\partial_x}}_t((\infty, 0])\nsubseteq (\infty, 0]$.  It is therefore crucial, in the previous argument, to make use of the fact that $\mathcal I$ is locally finitely generated.
\end{remark}

\begin{lemma}\label{lem:Pushforward-of-hamiltonian}
    Let $M$ be a smooth manifold, and let $\pi = \{\cdot\,, \cdot\}$ be a biderivation on $C^\infty(M)$. Suppose there exists a locally finitely generated ideal $\mathcal I \subseteq C^\infty(M)$ such that 

\begin{enumerate}
    \item\label{item:(A.4.i)} $\{C^\infty(M), \mathcal{I}\}\subset \mathcal{I}$,
    \item the Jacobiator $J_\pi \colon C^\infty(M)^{\otimes 3} \to C^\infty(M)$, defined by
\[
J_\pi(F, G, H) := \{\{F, G\}, H\} + \{\{G, H\}, F\} + \{\{H, F\}, G\},
\]
is $\mathcal I$-valued; that is, $J_\pi(F, G, H) \in \mathcal I$ for all $F, G, H \in C^\infty(M)$.
\end{enumerate}For any $F \in C^\infty(M)$, denote by $X^F := \{F, \cdot\} \in \mathfrak{X}(M)$ the associated vector field. 

Then, for all $F, G \in C^\infty(M)$ and all $t \in \mathbb{R}$ for which the flow $\Phi_t^{X^F}$ is defined, the (local) vector field
\[
(\Phi_t^{X^F})_* (X^G)- X^{G \circ \Phi_{t}^{X^F}}
\]
vanishes on the zero locus $Z(\mathcal{I})$.
\end{lemma}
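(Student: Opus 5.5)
Fix $F,G$ and write $\phi_t:=\Phi_t^{X^F}$. I will compare the two local vector fields $V_t:=(\phi_t)_*X^G$ and $W_t:=X^{G\circ\phi_{t}}$ by differentiating in $t$ and showing that their difference solves a linear ODE whose inhomogeneous term vanishes on $Z(\mathcal I)$, and then conclude with a uniqueness argument of the same kind as in Lemma~\ref{lem:flow-zero-locus}. By Lemma~\ref{lem:flow-zero-locus} and hypothesis~(\ref{item:(A.4.i)}), $X^F[\mathcal I]\subset\mathcal I$, so $\phi_t$ preserves $Z(\mathcal I)$. Hence it suffices to work pointwise along a flow line $t\mapsto\phi_t(p)$ with $p\in Z(\mathcal I)$, and every point met along this line lies in $Z(\mathcal I)$.

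\textbf{Step 1: the derivative of $V_t$.} Since $\phi_t=\Phi^{X^F}_t$, we have $\tfrac{d}{dt}(\phi_t)_*X^G=-[X^F,(\phi_t)_*X^G]$. To avoid sign and pullback bookkeeping I will test against functions. For $H\in C^\infty(M)$ set
\[
a_t(H):=\bigl((\phi_t)_*X^G\bigr)[H]-\{G\circ\phi_{t},H\}.
\]
The goal is to show that $a_t(H)$ vanishes on $Z(\mathcal I)$ for all $H$. At $t=0$ both terms equal $\{G,H\}$, so $a_0=0$.

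\textbf{Step 2: the derivative of $W_t$.} We have $\tfrac{d}{dt}(G\circ\phi_{t})=\{F,G\circ\phi_{t}\}$, up to the convention on the direction of the flow. Therefore $\tfrac{d}{dt}X^{G\circ\phi_{t}}=X^{\{F,G\circ\phi_{t}\}}$. The Jacobi identity modulo $\mathcal I$ gives
\[
X^{\{F,K\}}=[X^F,X^K]+R_K,
\]
where $R_K(H)=J_\pi(F,K,H)\in\mathcal I$. Thus both $V_t$ and $W_t$ satisfy $\dot Y=\pm[X^F,Y]$, up to an error that takes values in $\mathcal I$ when applied to functions. Consequently $D_t:=V_t-W_t$ satisfies $\dot D_t=\pm[X^F,D_t]+E_t$ with $E_t[H]\in\mathcal I$.

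\textbf{Step 3: transport and conclusion.} Set $\tilde D_t:=(\phi_{-t})_*D_t$. The bracket term is then absorbed, and $\tfrac{d}{dt}\tilde D_t=(\phi_{-t})_*E_t$. Evaluated on $H$ at a point of $Z(\mathcal I)$, this is $E_t[H\circ\phi_t]$ taken at $\phi_t(\cdot)$ (or a similar expression). This lies in $\mathcal I$ evaluated at a point of $Z(\mathcal I)$, because $\phi_t$ preserves $Z(\mathcal I)$, so it is zero. Integrating from $\tilde D_0=0$ gives $\tilde D_t=0$ on $Z(\mathcal I)$. Since $\phi_t$ maps $Z(\mathcal I)$ into itself, it follows that $D_t=0$ on $Z(\mathcal I)$.

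\textbf{Main obstacle.} The difficulty lies in Step 3. The error term $E_t$ is only known to be $\mathcal I$-valued after evaluation, so one must check that evaluating along the transported point really lands in $Z(\mathcal I)$. One must also check that pulling back by $\phi_t$ does not destroy $\mathcal I$-membership; by the Remark after Lemma~\ref{lem:flow-zero-locus}, $\phi_t^*\mathcal I\subset\mathcal I$ can fail in general. I avoid this by working pointwise: I only use the value of an element of $\mathcal I$ at points of $Z(\mathcal I)$, never the pullback of the ideal itself. Local finite generation enters only through Lemma~\ref{lem:flow-zero-locus}, which gives the invariance of $Z(\mathcal I)$.
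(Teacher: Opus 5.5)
Your argument is correct and is essentially the paper's proof in vector-field language. Unwinding the definitions, your transported difference is $\tilde D_t[H](y)=\{G,H\}(y)-\{G\circ\phi_t,H\circ\phi_t\}(\phi_{-t}(y))$, which is, up to the constant $\{G,H\}(y)$, exactly the quantity the paper differentiates (its $t$-derivative is the pull-back by $\phi_{-t}$ of $J_\pi(F,G\circ\phi_t,H\circ\phi_t)$, hence zero on the flow-invariant set $Z(\mathcal I)$) to show that $\phi_t$ preserves the bracket along $Z(\mathcal I)$.

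The only thing to tidy is the sign convention. Use the paper's convention $(\phi_t)_*Y=T\phi_{-t}\circ Y\circ\phi_t$: then $\dot V_t=+[X^F,V_t]$, which matches $\dot W_t=[X^F,W_t]+R_{G\circ\phi_t}$. With the standard push-forward the two signs clash, and the correct identity would involve $G\circ\phi_{-t}$ instead of $G\circ\phi_t$.
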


\begin{proof} To start with, we show the following identity. For $F, G, H\in C^\infty(M)$ \[\bigr\{G\circ\Phi_{t}^{X^F},\;H\circ \Phi_{t}^{X^F}\bigr\}|_{Z(\mathcal{I})}=\{F, G\}\circ \Phi_{t}^{X^F}|_{Z(\mathcal{I})}.\] Indeed, by differentiating with respect to $t$ we get:
\begin{align*}
&\frac{d}{dt}
\bigr(\Phi_{-t}^{X^F}\bigr)^{*}\!\bigr\{\bigr(\Phi_{t}^{X^F}\bigr)^{*}G,\;\bigr(\Phi_{t}^{X^F}\bigr)^{*}H\bigr\}=\\
&\bigr(\Phi_{-t}^{X^F}\bigr)^{*}\underbrace{\bigr(
\bigr\{(X^F\bigr(\Phi_{t}^{X^F}\bigr)^{*}G,\bigr(\Phi_{t}^{X^F}\bigr)^{*}H\bigr\}
+
\bigr\{\bigr(\Phi_{t}^{X^F}\bigr)^{*}G,X^F\bigr(\Phi_{t}^{X^F}\bigr)^{*}H)\bigr\}
-
X^{F}\bigr\{\bigr(\Phi_{t}^{X^F}\bigr)^{*}G,\bigr(\Phi_{t}^{X^F}\bigr)^{*}H\bigr\}
\bigr)}_{J_\pi\bigr(F, \bigr(\Phi_{t}^{X^F}\bigr)^{*}G, \bigr(\Phi_{t}^{X^F}\bigr)^{*}H\bigr)\in \mathcal{I}}.
\end{align*}
We have used the fact that the failure of $X^F$ to be a derivation of $\{\cdot\,,\cdot\}$ is valued in $\mathcal I$. 
By Lemma \ref{lem:flow-zero-locus}, item \ref{item:(A.4.i)} implies that $\Phi_{-t}^{X^F}(x_0)\in Z(\mathcal{I})$ for every $x_0\in Z(\mathcal{I})$ where $\Phi^{X^F}_{-t}(x_0)$ is defined. Therefore, \[\frac{d}{dt}
\bigr(\Phi_{-t}^{X^F}\bigr)^{*}\!\bigr\{\bigr(\Phi_{t}^{X^F}\bigr)^{*}G,\;\bigr(\Phi_{t}^{X^F}\bigr)^{*}H\bigr\}(x_0)=0.\] Hence, the map $t\mapsto \bigr\{\bigr(\Phi_{t}^{X^F}\bigr)^{*}G,\;\bigr(\Phi_{t}^{X^F}\bigr)^{*}H\bigr\}\circ \Phi_{-t}^{X^F}(x_0) $ is constant and is equal to the value at $t=0$ \[\bigr\{\bigr(\Phi_{t}^{X^F}\bigr)^{*}G,\;\bigr(\Phi_{t}^{X^F}\bigr)^{*}H\bigr\}\circ \Phi_{-t}^{X^F}(x_0)=\{F, G\}(x_0).\] This implies that \[\bigr\{G\circ\Phi_{t}^{X^F},\;H\circ \Phi_{t}^{X^F}\bigr\}|_{Z(\mathcal{I})}=\{F, G\}\circ \Phi_{t}^{X^F}|_{Z(\mathcal{I})}.\]This proves the identity.\\

For $x_0\in Z(\mathcal{I})$,  \begin{align*}
    \bigr(\Phi_{t}^{X^F}\bigr)_{*}(X^G)[H](x_0)&=T\Phi^{X_F}_{-t}(X^G\circ \Phi^{X^G}_t)[H](x_0)\\&=T(H\circ \Phi_{-t}^{X^F})(X^G\circ \Phi_{t}^{X^F}(x_0))\\&=X^G[H\circ \Phi_{-t}^{X^F}]\circ \Phi_{t}^{X^F}(x_0)=\{G, H\circ \Phi_{-t}^{X^F}\}\circ \Phi_{t}^{X^F}(x_0)\\&=\{G\circ\Phi_{t}^{X^F}, H\}(x_0)\\&=X^{G\circ\Phi_{t}^{X^F}}[H](x_0).
\end{align*}

\end{proof}
\end{document}